\definecolor{DarkGreen}{rgb}{0.2,0.6,0.2}
\def\eps{\varepsilon}
\def\lra{\longrightarrow}
\def\ua{\uparrow}
\def\da{\downarrow}
\def\wt{\widetilde}
\def\ignore#1{}
\def\bR{{\mathbb R}}
\def\bC{\mathbb C}
\def\bZ{\mathbb Z}
\def\bN{\mathbb N}
\def\bD{{\mathbb D}}
\def\cN{{\mathscr N}}
\def\cS{{\mathscr S}}
\def\cM{{\mathscr M}}
\def\cZ{{\mathscr Z}}
\def\cR{{\mathscr R}}
\def\cT{{ T}}
\newtheorem{theorem}{Theorem}[section]
\newtheorem{proposition}[theorem]{Proposition}
\newtheorem{lemma}[theorem]{Lemma}
\newtheorem{corollary}[theorem]{Corollary}
\theoremstyle{definition}
\newtheorem{definition}[theorem]{Definition}
\newtheorem{example}[theorem]{Example}
\newtheorem{remark}[theorem]{Remark}
\def\lra{\longrightarrow}
\def\eps{\varepsilon}
\def\<{\langle}
\def\>{\rangle}
\def\wt#1{\widetilde{#1}}
 \def\lra{\longrightarrow}
 \def\ua{\uparrow}
 \def\da{\downarrow}
 \def\wt{\widetilde}
 \def\argmax{\mathop{\hbox{\rm arg\,max}}}
\title{Step roots of Littlewood polynomials and\\  the extrema of  functions in the Takagi class}
\author{Xiyue Han$^*$ and Alexander Schied\thanks{
 University of Waterloo, 200 University Ave W, Waterloo, Ontario, N2L 3G1, Canada. E-Mails: {\tt xiyue.han@uwaterloo.ca, alex.schied@gmail.com}.\hfill\break
The authors gratefully acknowledge support from the
 Natural Sciences and Engineering Research Council of Canada through grant RGPIN-2017-04054.\hfill\break
This paper is based on the first author's thesis \cite{HanMMathThesis}, where its results were published in preliminary form. }}
\begin{document}
\date{\normalsize January 5, 2020} 
\maketitle
\begin{abstract}We give a new approach to characterizing and computing the set of global maximizers and minimizers of the functions in the Takagi class and, in particular, of the Takagi--Landsberg functions. The latter form a family of fractal functions $f_\alpha:[0,1]\to\bR$ parameterized by $\alpha\in(-2,2)$. 
We show that $f_\alpha$ has a unique maximizer in $[0,1/2]$ if and only if there does not exist a Littlewood polynomial that has $\alpha$ as a certain type of root, called step root.  Our general results lead to explicit and closed-form expressions for  the maxima of the Takagi--Landsberg functions with $\alpha\in(-2,1/2]\cup(1,2)$. 
For $(1/2,1]$, we show that the step roots are dense in that interval. If $\alpha\in (1/2,1]$ is a step root, then the set of maximizers of $f_\alpha$ is an explicitly given perfect set with Hausdorff dimension $1/(n+1)$, where $n$  is the degree  of the minimal Littlewood polynomial that has $\alpha$ as its step root. In the same way, we determine explicitly the minima of all Takagi--Landsberg functions. As a corollary, we show that the closure of the set of all real roots of all Littlewood polynomials is equal to $[-2,-1/2]\cup[1/2,2]$.\end{abstract}

\noindent{\textbf{Key words}.} Takagi class, Takagi--Landsberg functions, real roots of Littlewood polynomials, step roots

\noindent\textbf{MSC subject classifications.}  28A80, 26A30, 26C10

\section{Introduction}

Rough paths calculus~\cite{FrizHairer} and the recent extension~\cite{ContPerkowski}  of F\"ollmer's pathwise It\^o calculus~\cite{FoellmerIto} provide  means of dealing with  rough trajectories that are not ultimately based on Gaussian processes such as fractional Brownian motion. As observed, e.g., in~\cite{GubinelliImkellerPerkowski}, such a pathwise calculus becomes particularly transparent when expressed in terms of the Faber--Schauder expansions of the integrands. When looking for the Faber--Schauder expansions of trajectories that are suitable pathwise integrators and  that have   \lq\lq roughness" specified in terms of a  given Hurst parameter, one is naturally led~\cite{MishuraSchied2} to certain extensions of a well-studied class of fractal functions, the Takagi--Landsberg functions.  These functions are defined as 
$$
f_\alpha(t):=\sum_{m=0}^\infty\frac{\alpha^m}{2^m}\phi(2^m t),\qquad 0\le t\le1,
$$
where $\alpha\in(-2,2)$ is a real parameter and 
$$
\phi(t):=\min_{z\in\bZ}|t-z|,\qquad t\in\bR,
$$
 is  the tent map,
If such functions are used to describe rough phenomena in applications, it is a natural question to analyze the range of these functions, i.e., to  determine the extrema of the Takagi--Landsberg functions.

While the preceding paragraph describes our original motivation for the research presented in this paper,  determining the maximum of generalized Takagi functions is also of intrinsic mathematical interest and attracted several authors in the past. The first contribution was by Kahane~\cite{Kahane}, who found the maximum and the set of maximizers of the classical Takagi function, which corresponds to $\alpha=1$. This result was later rediscovered in~\cite{Martynov} and subsequently extended in~\cite{Baba} to certain van der Waerden functions. 
Tabor and Tabor~\cite{TaborTabor} computed the maximum value of the Takagi--Landsberg function for those parameters $\alpha_n\in(1/2,1]$ that are characterized by $1-\alpha_n-\cdots-\alpha_n^{n}=0$ for $n\in\bN$. Galkin and Galkina~\cite{GalkinGalkina} proved that the maximum for $\alpha\in[-1,1/2]$ is attained at $t=1/2$. In the interval $(1,2)$, the case $\alpha=\sqrt2$ is special, as it corresponds to the Hurst parameter $H=1/2$. 
The corresponding maximum can be deduced from~\cite[Lemma 5]{Galkina} and was given independently in~\cite{GalkinGalkina} and~\cite{SchiedTakagi}.  Mishura and Schied~\cite{MishuraSchied2} added uniqueness to  the results from~\cite{GalkinGalkina,SchiedTakagi}  and extended them to all $\alpha\in(1,2)$.  The various contributions from~\cite{Kahane,TaborTabor,GalkinGalkina,MishuraSchied2} are illustrated in Figure~\ref{hist fig}, which shows the largest maximizer of the Takagi--Landsberg function $f_\alpha$ as a function of $\alpha$. 
From Figure~\ref{hist fig}, it is apparent that the most interesting cases are $\alpha\in(-2,-1)$ and $\alpha\in(1/2,1]$, which are also the ones about which nothing was  known beyond the special parameters considered in~\cite{Kahane} and~\cite{TaborTabor}.

In this paper, we present a completely new approach to the computation of the maximizers of the functions $f_\alpha$. This approach  works simultaneously for all parameters $\alpha\in(-2,2)$. It even extends to the entire Takagi class, which was introduced by Hata and Yamaguti~\cite{HataYamaguti} and is formed by all functions of the form 
\begin{equation*}
f(t):=\sum_{m=0}^\infty c_m\phi(2^mt),\qquad t\in[0,1],
\end{equation*}
where $(c_m)_{m\in\bN}$ is an absolutely summable  sequence. An example is the choice $c_m=2^{-m}\eps_m$, where $(\eps_m)_{m\in\bN}$ is an i.i.d.~sequence of $\{-1,+1\}$-valued Bernoulli random variables.  For this example, the distribution of the maximum was  studied by Allaart~\cite{AllaartRandomMax}.
Our approach works for arbitrary sequences $(c_m)_{m\in\bN}$  and provides a recursive characterization of the binary expansions of all maximizers and minimizers. This characterization is called  the \emph{step condition}. It yields a simple method to compute the smallest and largest maximizers and minimizers of $f$ with arbitrary precision. Moreover, it allows us to give exact statements on the cardinality of the set of   maximizers and minimizers of $f$. For the case of the Takagi--Landsberg functions, we find that, for $\alpha\in(-2,-1)$, the function $f_\alpha$ has either two or four maximizers, and we provide their exact values and the maximum values of $f_\alpha$ in closed form. For $\alpha\in[-1,1/2]$, the function $f_\alpha$ has a unique maximizer at $t=1/2$, and for $\alpha\in(1,2)$ there are exactly two maximizers at $t=1/3$ and $t=2/3$. The case $\alpha\in(1/2,1]$ is the most interesting. It will be discussed below.

In general, we show that non-uniqueness of maximizers in $[0,1/2]$ occurs if and only if there exists a Littlewood polynomial $P$ such that the parameter $\alpha$ is a special root of $P$,  called a \emph{step root}. The step roots also coincide with the discontinuities of the functions that assign to each $\alpha\in(-2,2)$ the respective smallest and largest maximizer of $f_\alpha$  in $[0,1/2]$. We show that the polynomials  $1-x-\cdots-x^{2n}$ are the only Littlewood polynomials with negative step roots, which in turn all belong to the interval $(-2,-1)$. They correspond exactly to the  jumps in $(-2,-1)$ of  the function  in Figure~\ref{hist fig}. While there are no step roots in $[-1,1/2]\cup(1,2)$, we show that the step roots lie dense in $(1/2,1]$. Moreover, if $n$ is  the smallest degree of a Littlewood polynomial that has $\alpha\in(1/2,1]$ as a step root, then the set of maximizers of $f_\alpha$ is a perfect set of Hausdorff dimension $1/(n+1)$, and the binary expansions of all maximizers are given in explicit form in terms of the coefficients of the corresponding Littlewood polynomial. As a corollary, we show that the closure of the set of all real roots of all Littlewood polynomials is equal to $[-2,-1/2]\cup[1/2,2]$.

This paper is organized as follows. In Section~\ref{Takagi class section}, we present general results for functions of the form \eqref{Takagi class eq}. In Section~\ref{TL section}, we discuss the particular case of the Takagi--Landsberg functions. The global maxima of $f_\alpha$ for the cases in which $\alpha$ belongs to the intervals $(-2,-1)$, $[-1,1/2]$, $(1/2,1]$, and $(1,2)$ are analyzed separately in the respective Subsections~\ref{alpha<1 results section},
~\ref{alpha in[-1,1/2] Section}, 
\ref{(1/2,1] section}, and 
\ref{alpha>1 results section}. We also discuss the global minima of $f_\alpha$ in Subsection~\ref{minima section}. As explained above, the maxima of the Takagi--Landsberg functions correspond to step roots of the Littlewood polynomials. Our results  yield  corollaries on the locations of such step roots and on the closure of the set of all real roots of the Littlewood polynomials. These corollaries are stated and proved in Section~\ref{step roots section}. The proofs of the results from Sections~\ref{Takagi class section} and~\ref{TL section} are deferred to the respective Sections~\ref{Takagi class proofs}
and~\ref{TL proofs}.

\begin{figure}
\begin{center}
\begin{overpic}[width=10cm]{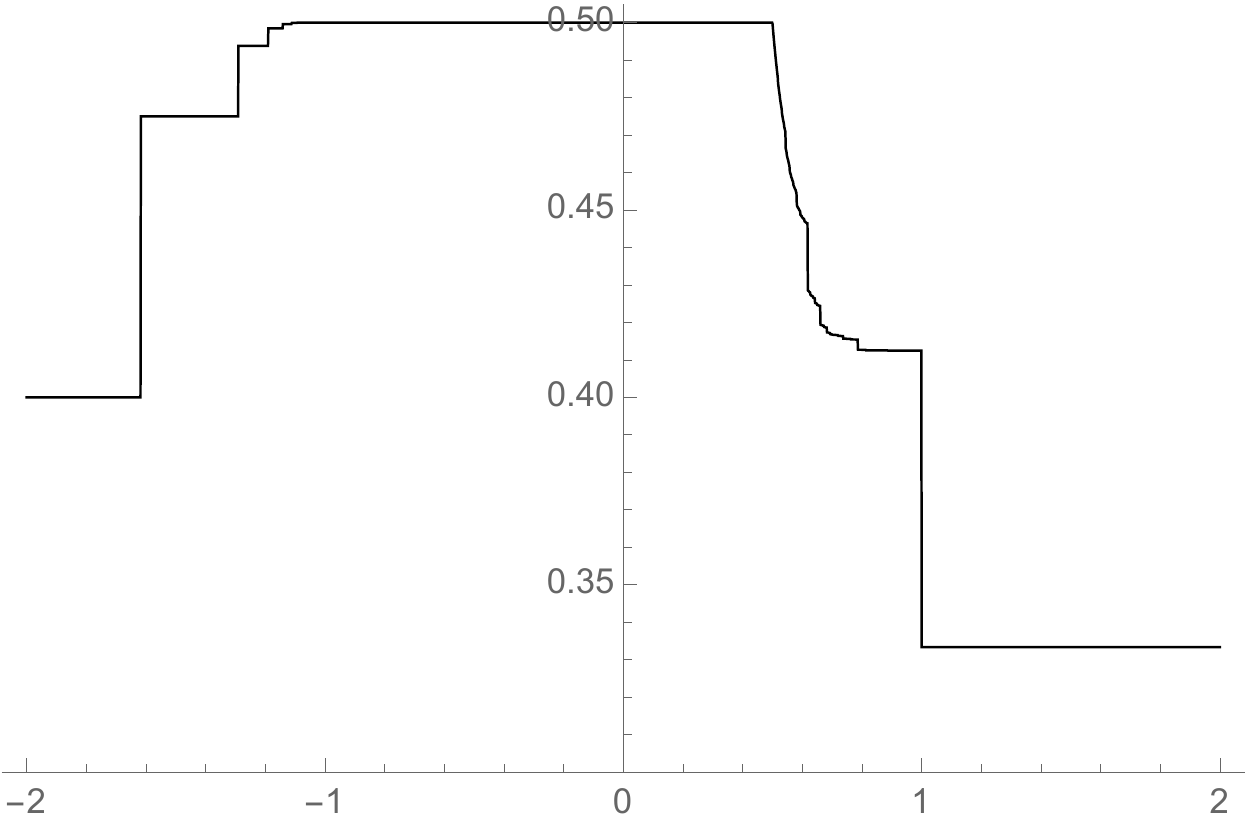}
\put(25,66){\footnotesize Galkin \& Galkina~\cite{GalkinGalkina}}
\put(74.5,16){\footnotesize Mishura \& Schied~\cite{MishuraSchied2}}
\put(74,39){\footnotesize Kahane~\cite{Kahane}}
\put(70,48){\footnotesize Tabor \& Tabor~\cite{TaborTabor} for}
\put(70,45.2){\footnotesize$1-\alpha-\alpha^2-\cdots-\alpha^n=0$}
\put(69,48){\vector(-1,-2){3}}
\put(69,48){\vector(-1,0){3}}
\put(69,48){\vector(-1,2){4.5}}
\end{overpic}\\
\caption{Maximizer of $t\mapsto f_\alpha(t)$ in $[0,1/2]$ as a function of $\alpha\in(-2,2)$.}\label{hist fig}
\end{center}

\end{figure}

\section{Maxima of functions in the Takagi class}\label{Takagi class section}

The \emph{Takagi class} was introduced in~\cite{HataYamaguti}. It consists of the functions of the form
\begin{equation}\label{Takagi class eq}
f(t):=\sum_{m=0}^\infty c_m\phi(2^mt),\qquad t\in[0,1],
\end{equation}
where $\bm c=(c_m)_{m\in\bN}$ is a sequence in the space $\ell^1$ of absolutely summable sequences and 
$$
\phi(t):=\min_{z\in\bZ}|t-z|,\qquad t\in\bR,
$$
 is  the tent map. Under this assumption, the series in \eqref{Takagi class eq} converges uniformly in $t$, so that $f$ is a continuous function. The sequence $\bm c\in\ell^1$ will be fixed throughout this section.

For any $\{-1,+1\}$-valued sequence $\bm\rho=(\rho_m)_{m\in\bN_0}$, we let 
\begin{equation}\label{Psi eq}
\cT(\bm\rho)=\sum_{n=0}^\infty2^{-(n+2)}(1-\rho_n)\in[0,1]. 
\end{equation}
Then $\eps_n:=\frac12(1-\rho_n)$ will be the digits of a binary expansion of $t:=\cT(\bm\rho)$. We will call $\bm\rho$ a \emph{Rademacher expansion} of $t$. Clearly, the Rademacher expansion  is  unique unless $t$ is a dyadic rational number in $(0,1)$. Otherwise, $t$ will admit two distinct Rademacher expansions. The one with infinitely many occurrences of the digit $+1$ will be called the \emph{standard Rademacher expansion}. It can be obtained through the Rademacher functions, which are given by 
$
r_n(t):=(-1)^{\lfloor 2^{n+1} t\rfloor}$.
The following simple lemma illustrates the significance of the Rademacher expansion for the analysis of the function $f$.

\begin{lemma}\label{Rademacher f lemma}Let $\bm\rho=(\rho_m)_{m\in\bN_0}$ be a Rademacher expansion of $t\in[0,1]$. Then
\begin{equation*}
f(t)=\frac14\sum_{m=0}^\infty c_m\bigg(1-\sum_{k=1}^\infty 2^{-k}\rho_m\rho_{m+k}\bigg).
\end{equation*}
\end{lemma}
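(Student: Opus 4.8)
The plan is to compute $\phi(2^m t)$ explicitly in terms of the binary digits $\eps_n=\tfrac12(1-\rho_n)$ and then sum over $m$. First, observe that $\cT(\bm\rho)=\sum_{n\ge 0}2^{-(n+1)}\eps_n$, so that $(\eps_n)_{n\ge 0}$ is a binary expansion of $t$ and
$$
2^m t=\sum_{n=0}^{m-1}2^{m-1-n}\eps_n+s_m,\qquad\text{where }\ s_m:=\sum_{n\ge m}2^{m-1-n}\eps_n\in[0,1].
$$
The first sum is an integer, and since $\phi$ has period $1$ and satisfies $\phi(x)=\min(x,1-x)$ for $x\in[0,1]$, this gives $\phi(2^m t)=\phi(s_m)=\min(s_m,1-s_m)$.

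Next I would perform a case distinction on the digit $\eps_m$, using the elementary identity $\sum_{n>m}2^{m-1-n}=\tfrac12$. If $\eps_m=0$, then $s_m=\sum_{n>m}2^{m-1-n}\eps_n\le\tfrac12$, so $\phi(2^m t)=s_m=\sum_{n>m}2^{m-1-n}\eps_n$; if $\eps_m=1$, then $s_m=\tfrac12+\sum_{n>m}2^{m-1-n}\eps_n\ge\tfrac12$, so $\phi(2^m t)=1-s_m=\sum_{n>m}2^{m-1-n}(1-\eps_n)$. Both cases are subsumed in the single formula $\phi(2^m t)=\sum_{n>m}2^{m-1-n}\,|\eps_n-\eps_m|$. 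Since $\eps_n,\eps_m\in\{0,1\}$, one has $|\eps_n-\eps_m|=\eps_n+\eps_m-2\eps_n\eps_m=\tfrac12(1-\rho_n\rho_m)$; substituting $k=n-m$ and summing the resulting geometric series yields
$$
\phi(2^m t)=\sum_{k=1}^\infty 2^{-(k+2)}\bigl(1-\rho_m\rho_{m+k}\bigr)=\frac14\bigg(1-\sum_{k=1}^\infty 2^{-k}\rho_m\rho_{m+k}\bigg).
$$

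Finally I would multiply by $c_m$ and sum over $m\in\bN_0$; the interchange of the two summations is legitimate because $\bm c\in\ell^1$ while $0\le\phi(2^m t)\le\tfrac12$ and $|\rho_m\rho_{m+k}|=1$, so the double series converges absolutely. This produces the claimed identity. The argument is elementary; the only points requiring a little care are the bookkeeping of the index shifts between $\cT(\bm\rho)$, the fractional part of $2^m t$, and the final double series, and the observation that when $t$ is a dyadic rational the Rademacher expansion is not unique — but since the derivation uses only that $(\eps_n)$ is \emph{some} binary expansion of $t$, and the case distinction together with the bounds $s_m\le\tfrac12$ resp. $s_m\ge\tfrac12$ is valid for either choice, the formula holds for every Rademacher expansion of $t$.
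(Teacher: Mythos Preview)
Your proof is correct and follows the same approach as the paper: both establish the identity $\phi(2^mt)=\tfrac14\bigl(1-\sum_{k\ge1}2^{-k}\rho_m\rho_{m+k}\bigr)$ and then substitute into the defining series for $f$. The only difference is that the paper merely asserts this formula for $\phi(2^mt)$, whereas you supply the full derivation via the case distinction on $\eps_m$ and the identity $|\eps_n-\eps_m|=\tfrac12(1-\rho_n\rho_m)$.
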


The following concept is the key to our analysis of the maxima of the function $f$.

\begin{definition}\label{gen step condition def}We will say that a $\{-1,+1\}$-valued sequence $(\rho_m)_{m\in\bN_0}$   satisfies the \emph{step condition} if 
$$\rho_{n}\sum_{m=0}^{n-1}2^mc_m\rho_m\le0\qquad\text{for all $n\in\bN$.}
$$
\end{definition}

Now we can state our first main result on the set
of 	 maximizers of $f$.

\begin{theorem}\label{general Takagi step cond thm}
For $t\in[0,1]$, the following conditions are equivalent.
\begin{enumerate}
\item $t$ is a maximizer of $f$;
\item every Rademacher expansion of $t$ satisfies the step condition; 
\item there exists a Rademacher expansion of $t$ that satisfies the step condition. 
\end{enumerate}
\end{theorem}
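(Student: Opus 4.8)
The plan is to exploit Lemma~\ref{Rademacher f lemma} to rewrite $f(t)$ in terms of a Rademacher expansion $\bm\rho$ of $t$, and then to show that the step condition is exactly the first-order optimality condition for maximizing the resulting expression over all $\{-1,+1\}$-valued sequences. Writing $S_n(\bm\rho):=\sum_{m=0}^{n-1}2^mc_m\rho_m$, the idea is to rearrange the double sum from Lemma~\ref{Rademacher f lemma} by collecting, for each $n\ge1$, the contribution of the "off-diagonal" products $\rho_{n-k}\rho_n$ with $k\ge1$; one checks that $f(t)=\frac14\sum_m c_m - \frac14\sum_{n=1}^\infty 2^{-n}\rho_n\sum_{m=0}^{n-1}2^mc_m\rho_m = \frac14\sum_m c_m - \frac14\sum_{n=1}^\infty 2^{-n}\rho_n S_n(\bm\rho)$, the interchange of summation being justified by $\bm c\in\ell^1$ and $|\rho_m|=1$. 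Thus maximizing $f$ is the same as minimizing $\sum_{n\ge1}2^{-n}\rho_n S_n(\bm\rho)$, and the step condition $\rho_n S_n(\bm\rho)\le0$ for all $n$ says precisely that every term of this series is nonpositive.

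From this reformulation, the implication (b)$\Rightarrow$(c) is trivial (every $t$ has at least one Rademacher expansion), and the core work is (c)$\Rightarrow$(a) and (a)$\Rightarrow$(b). For (c)$\Rightarrow$(a): suppose $\bm\rho$ satisfies the step condition, so $f(\cT(\bm\rho)) = \frac14\sum_m c_m + \frac14\sum_{n\ge1}2^{-n}|S_n(\bm\rho)|$. I would compare against an arbitrary sequence $\bm\sigma$ by a greedy/exchange argument, flipping the digits of $\bm\sigma$ one at a time from $n=1$ upward: at stage $n$, if $\sigma_n S_n(\bm\sigma)>0$ one replaces $\sigma_n$ by $-\sigma_n$, which strictly decreases the $n$-th term while leaving all earlier terms $S_1,\dots,S_n$ (hence the first $n-1$ terms of the series, and the sign pattern already fixed) unchanged — note $S_n$ depends only on $\rho_0,\dots,\rho_{n-1}$, which is what makes the induction on $n$ close. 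A limiting argument (uniform convergence of the series, continuity of $f$) shows that this process produces, in the limit, a sequence $\bm\rho^*$ satisfying the step condition with $f(\cT(\bm\rho^*))\ge f(\cT(\bm\sigma))$; and a separate sub-argument shows all step-condition sequences give the same value of $f$ — indeed $\frac14\sum_m c_m + \frac14\sum_n 2^{-n}|S_n|$ where $|S_n|$ is then forced, since once $\rho_0,\dots,\rho_{n-1}$ are such that $S_n\ne0$, the sign of $\rho_n$ is determined, and when $S_n=0$ the term vanishes regardless. Hence $\cT(\bm\rho)$ is a maximizer. For (a)$\Rightarrow$(b): if some Rademacher expansion of a maximizer $t$ violated the step condition at some $n$, flipping $\rho_n$ would strictly increase $f$ by the displayed formula, contradicting maximality — here one must be slightly careful that flipping a digit of a Rademacher expansion genuinely changes the point (or changes it to another representation of a point with strictly larger $f$-value), which is handled by the fact that changing $\rho_n$ changes $t$ by a nonzero dyadic amount unless we are at a boundary case, and the two Rademacher expansions of a dyadic rational are treated directly.

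The main obstacle I anticipate is the bookkeeping in (c)$\Rightarrow$(a): making the greedy exchange argument rigorous in the infinite-dimensional setting, i.e., controlling the limit of infinitely many digit-flips and ensuring the limiting sequence both satisfies the step condition and dominates the original $\bm\sigma$. The key structural fact that makes it work is that $S_n(\bm\rho)=\sum_{m=0}^{n-1}2^mc_m\rho_m$ depends only on the digits with index $<n$, so flips performed "to fix position $n$" never disturb the decisions already made at positions $<n$; this triangular structure is what lets the greedy algorithm converge and what ultimately forces the value of $f$ on the whole maximizer set to be the single explicit constant above. A secondary technical point is the dyadic-rational case, where a point has two Rademacher expansions and one must check the equivalence of (b) and (c) there by hand — but this is a finite case analysis, not a real difficulty.
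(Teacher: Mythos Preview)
Your rearranged identity
\[
f(\cT(\bm\rho))=\frac14\sum_{m\ge0}c_m-\frac14\sum_{n\ge1}2^{-n}\rho_n S_n(\bm\rho),
\qquad S_n(\bm\rho)=\sum_{m=0}^{n-1}2^mc_m\rho_m,
\]
is correct and is a very natural starting point; the interchange of sums is justified exactly as you say. The sub-argument that all step-condition sequences yield the same value of $f$ is also essentially right.

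The gap is in the exchange argument. Flipping $\rho_n$ does \emph{not} affect only the $n$-th term of the series: since $S_k$ contains $\rho_n$ for every $k>n$, all later terms change as well. A short computation gives
\[
g(\bm\rho')-g(\bm\rho)=-2\rho_n\Big(2^{-n}S_n(\bm\rho)+2^{n}c_n\sum_{k>n}2^{-k}\rho_k\Big),
\]
where $\bm\rho'$ is $\bm\rho$ with $\rho_n$ flipped and $g=\sum_{n\ge1}2^{-n}\rho_nS_n$. The ``future'' sum can dominate. Concretely, take $c_0=\eps>0$ small, $c_1=1$, $c_m=0$ for $m\ge2$, and $\bm\sigma=(1,1,-1,-1,-1,\dots)$. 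Then $\sigma_1S_1=\eps>0$, so your greedy flips $\sigma_1$; but this moves the point from $t=1/4$ with $f(1/4)=\eps/4+1/2$ to $t=1/2$ with $f(1/2)=\eps/2$, a strict \emph{decrease}. So the claim ``each flip strictly decreases the $n$-th term and hence does not decrease $f$'' fails, and with it the monotone-improvement proof of (c)$\Rightarrow$(a). The same oversight undermines your (a)$\Rightarrow$(b): in the example above the single flip does not increase $f$, so ``flipping $\rho_n$ would strictly increase $f$'' is not available as a contradiction.

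Your approach can be rescued, but it needs a genuinely different argument in place of the greedy monotonicity. One route is a finite-horizon dynamic programming argument: for each $N$, minimize $G_N(\bm\rho):=\sum_{n=1}^N2^{-n}\rho_nS_n$ by backward induction on $n$, showing that the optimal tail value starting from $S_{k+1}$ depends only on $|S_{k+1}|$ and is $2^{-k}$-Lipschitz in that variable; this Lipschitz bound is exactly what is needed to conclude that the greedy choice of $\rho_k$ is optimal at every stage. Letting $N\to\infty$ then gives (c)$\Rightarrow$(a). This is real work that is absent from the proposal, and the triangular structure you highlight is necessary but not sufficient for it.

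For comparison, the paper's proof is entirely different and geometric: it works with the truncated functions $f_n$, introduces ``maximizing edges'' (pairs of adjacent dyadic points on which $f_n$ attains its pairwise maximum), proves a key combinatorial lemma characterizing them, and shows that $t$ maximizes $f$ iff $t$ lies in every generation's union of maximizing-edge intervals. The step condition then enters through the slope of $f_n$ on the dyadic interval containing $t$, which equals $S_{n+1}(\bm\rho)$. Your analytic reformulation is more direct and would, if completed along the lines above, give a shorter proof; the paper's route, by contrast, yields additional structural information about $\cM_n$ and the maximizing edges that is reused elsewhere.
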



Theorem~\ref{general Takagi step cond thm} provides a way to construct maximizers of $f$. More precisely, we define  recursively the following pair of sequences $\bm\rho^{\flat}$ and $\bm\rho^{\sharp}$. We let $\rho^{\flat}_0=\rho^{\sharp}_0=1$ and, for $n\in\bN$,
\begin{equation}\label{rho+- eq}
\rho_n^\flat=\begin{cases}+1&\text{if $\sum_{m=0}^{n-1}2^mc_m\rho^\flat_m<0$,}\\
-1&\text{otherwise,}
\end{cases}\qquad \qquad 
\rho_n^\sharp=\begin{cases}+1&\text{if $\sum_{m=0}^{n-1}2^mc_m\rho^\sharp_m\le0$,}\\
-1&\text{otherwise.}
\end{cases}
\end{equation}

\begin{corollary}\label{smallest largest cor}With the above notation, $\cT(\bm\rho^\flat)$ is the largest and $\cT(\bm\rho^\sharp)$ is the smallest maximizer of $f$ in $[0,1/2]$.
\end{corollary}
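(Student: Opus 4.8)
The plan is to show that $\bm\rho^\flat$ and $\bm\rho^\sharp$ both satisfy the step condition (Definition~\ref{gen step condition def}), so that by Theorem~\ref{general Takagi step cond thm} the points $\cT(\bm\rho^\flat)$ and $\cT(\bm\rho^\sharp)$ are maximizers of $f$; then to show that, among all Rademacher expansions $\bm\rho$ with $\rho_0=1$ (which is exactly the condition $t\in[0,1/2]$) that satisfy the step condition, $\bm\rho^\flat$ is lexicographically largest and $\bm\rho^\sharp$ is lexicographically smallest, and finally to translate the lexicographic ordering of Rademacher expansions into the usual ordering of the reals $\cT(\bm\rho)$.

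First I would verify the step condition for $\bm\rho^\flat$ and $\bm\rho^\sharp$. Writing $S_n:=\sum_{m=0}^{n-1}2^mc_m\rho_m$ for the partial sum associated with a given sequence $\bm\rho$, the step condition is precisely $\rho_n S_n\le 0$ for all $n\in\bN$. For $\bm\rho^\flat$: if $S_n<0$ then $\rho_n^\flat=+1$ and $\rho_n^\flat S_n=S_n<0$; if $S_n\ge 0$ then $\rho_n^\flat=-1$ and $\rho_n^\flat S_n=-S_n\le 0$. The computation for $\bm\rho^\sharp$ is identical, with the sole difference that the case $S_n=0$ is assigned $\rho_n^\sharp=+1$, which still gives $\rho_n^\sharp S_n=0$. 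Hence both sequences satisfy the step condition, and by the equivalence (c)$\Leftrightarrow$(a) in Theorem~\ref{general Takagi step cond thm}, both $\cT(\bm\rho^\flat)$ and $\cT(\bm\rho^\sharp)$ are maximizers; since $\rho_0^\flat=\rho_0^\sharp=1$ forces $\cT(\bm\rho^\flat),\cT(\bm\rho^\sharp)\le 1/2$ by \eqref{Psi eq}, they are maximizers in $[0,1/2]$.

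Next I would prove extremality. Let $\bm\rho$ be any Rademacher expansion with $\rho_0=1$ satisfying the step condition; I claim $\rho_n^\flat\ge\rho_n$ and $\rho_n^\sharp\le\rho_n$ for all $n$ in the sense that the first index of disagreement has the asserted sign, i.e. $\bm\rho^\flat$ dominates and $\bm\rho^\sharp$ is dominated in the lexicographic order. Argue by induction on $n$: suppose $\rho_m^\flat=\rho_m$ for $m<n$, so the partial sums $S_n$ coincide for $\bm\rho$ and $\bm\rho^\flat$. The step condition for $\bm\rho$ gives $\rho_n S_n\le 0$. If $S_n<0$, this forces... actually it only forces $\rho_n\in\{+1\}$ when combined with nothing — more carefully: $\rho_n S_n\le 0$ with $S_n<0$ gives $\rho_n\ge 0$, hence $\rho_n=+1=\rho_n^\flat$, so agreement continues. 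If $S_n\ge 0$, then $\rho_n^\flat=-1\le\rho_n$, giving the lexicographic domination at the first point of disagreement (if any). The argument for $\bm\rho^\sharp$ is the mirror image, using that $S_n=0$ permits $\rho_n$ to be either sign while $\rho_n^\sharp=+1$, and $S_n>0$ forces $\rho_n=-1$. Thus among step-condition sequences starting with $\rho_0=1$, $\bm\rho^\flat$ is lexicographically maximal and $\bm\rho^\sharp$ lexicographically minimal.

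Finally I would convert lexicographic order to real order. Since $\cT(\bm\rho)=\sum_{n\ge0}2^{-(n+2)}(1-\rho_n)$ with $\rho_n\in\{-1,+1\}$, the map $\cT$ is (weakly) order-reversing from the lexicographic order on sequences: at the first index $n$ where two sequences differ, the one with $\rho_n=+1$ contributes $0$ at that position versus $2^{-(n+1)}$ for the one with $\rho_n=-1$, and the total tail variation beyond index $n$ is at most $\sum_{k>n}2^{-(k+1)}=2^{-(n+1)}$, so the lexicographically larger sequence yields the smaller (or equal) value of $\cT$. Combined with Theorem~\ref{general Takagi step cond thm}(a)$\Leftrightarrow$(c) — which says the set of maximizers in $[0,1/2]$ is exactly $\{\cT(\bm\rho):\rho_0=1,\ \bm\rho\text{ satisfies the step condition}\}$ — lexicographic maximality of $\bm\rho^\flat$ yields that $\cT(\bm\rho^\flat)$ is the largest maximizer in $[0,1/2]$, and lexicographic minimality of $\bm\rho^\sharp$ yields that $\cT(\bm\rho^\sharp)$ is the smallest. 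The one point demanding care — the main obstacle — is the boundary/tie case in this last step: when two step-condition sequences agree up to index $n$ and then one takes $+1$ while the other takes $-1$, one must check that the tail of the $+1$-branch cannot strictly exceed $\cT$ of the $-1$-branch; equality $\cT(\bm\rho)=\cT(\bm\rho')$ can only happen for dyadic rationals, and there one must confirm that both Rademacher expansions of such a maximizer satisfy the step condition (which is guaranteed by the equivalence (b)$\Leftrightarrow$(a) of Theorem~\ref{general Takagi step cond thm}), so no genuine ordering ambiguity arises.
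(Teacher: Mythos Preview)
Your approach matches the paper's: both locate the first index of disagreement between $\bm\rho^\flat$ (resp.\ $\bm\rho^\sharp$) and a competing step-condition sequence, observe that the partial sum must vanish there (the paper isolates this observation as Lemma~\ref{sign lemma}), read off the direction of the disagreement from the defining recursion~\eqref{rho+- eq}, and translate to the real order via $\cT$.

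One caution: your intermediate labeling is inverted. Your own computation shows $\rho_n^\flat=-1\le\rho_n$ at the first disagreement, so $\bm\rho^\flat$ is lexicographically \emph{minimal}, not maximal; combined with the (correct) order-reversal of $\cT$, this yields the \emph{largest} real value. The final conclusion is right, but it is reached through two sign slips that cancel --- you should straighten out which of $\bm\rho^\flat,\bm\rho^\sharp$ is lex-min and which is lex-max before writing the argument up. The paper avoids the dyadic tie case slightly differently, by comparing $t^\flat$ against the \emph{standard} Rademacher expansion of a competing maximizer (which has infinitely many zeros in binary), thereby obtaining strict inequality $t<t^\flat$ directly rather than arguing with weak order-reversal.
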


\begin{remark}\label{minimizer rem}By switching the signs in the sequence $(c_n)_{n\in\bN_0}$, we get analogous results for the minima of the function $f$. Specifically, if we define sequences  $\bm\lambda^\flat$ and $\bm\lambda^\sharp$ by $\lambda^\flat_0=\lambda^\sharp_0=+1$ and 
\begin{equation*}
\lambda_n^\flat=\begin{cases}+1&\text{if $\sum_{m=0}^{n-1}2^mc_m\lambda^\flat_m>0$,}\\
-1&\text{otherwise,}
\end{cases}\qquad \qquad 
\lambda_n^\sharp=\begin{cases}+1&\text{if $\sum_{m=0}^{n-1}2^mc_m\lambda^\sharp_m\ge0$,}\\
-1&\text{otherwise,}
\end{cases}
\end{equation*}
then $\cT(\bm\lambda^\flat)$ is the largest and  $\cT(\bm\lambda^\sharp)$ is the smallest minimizer of $f$ in $[0,1/2]$. 
\end{remark}

The following corollary and its short proof illustrate the power of our method.

\begin{corollary}\label{general Takagi pos Cor}We have $f(t)\ge0$ for all $t\in[0,1]$, if and only if $\sum_{m=0}^{n}2^mc_m\ge0$ for all $n\ge0$.
\end{corollary}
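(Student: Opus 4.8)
The plan is to exploit the machinery already established, namely Corollary~\ref{smallest largest cor} together with Remark~\ref{minimizer rem}, which together reduce the sign of $f$ on $[0,1]$ to a single computation at an explicitly constructed point. Since $f(0)=0$ and, by the self-similarity/symmetry $\phi(1-t)=\phi(t)$ underlying \eqref{Takagi class eq}, it suffices to understand the minimum of $f$ over $[0,1/2]$: $f\ge 0$ on $[0,1]$ if and only if the smallest value of $f$ on $[0,1/2]$ is nonnegative. That minimum value is $f(\cT(\bm\lambda^\flat))$ (equivalently $f(\cT(\bm\lambda^\sharp))$) by Remark~\ref{minimizer rem}.

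First I would compute the sequence $\bm\lambda^\flat$ from its recursion under the hypothesis $S_n:=\sum_{m=0}^n 2^m c_m\ge 0$ for all $n\ge 0$. We have $\lambda_0^\flat=+1$, so $\sum_{m=0}^{0}2^mc_m\lambda_m^\flat=c_0=S_0\ge 0$. The recursion sets $\lambda_1^\flat=+1$ precisely when this sum is $>0$; I would show by induction that in fact $\lambda_n^\flat=+1$ for all $n$ whenever all $S_n\ge 0$ — the point being that as long as the running weighted partial sum $\sum_{m=0}^{n-1}2^mc_m\lambda_m^\flat$ equals $S_{n-1}\ge 0$, the next digit is $+1$ (or, in the borderline case $S_{n-1}=0$, the recursion picks $-1$, but then one checks the sum stays controlled; it is cleanest to instead run $\bm\lambda^\sharp$, whose rule $\ge 0\Rightarrow +1$ makes the induction $\lambda_n^\sharp\equiv+1\iff S_{n-1}\ge0$ immediate). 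With all digits $+1$ one gets $\cT(\bm\lambda)=0$, hence the minimum of $f$ on $[0,1/2]$ is $f(0)=0\ge 0$, giving the ``if'' direction.

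For the converse, suppose $S_n=\sum_{m=0}^{n}2^mc_m<0$ for some $n$; let $N$ be the smallest such index. Then $\lambda_m^\sharp=+1$ for $m\le N$ and the weighted partial sum first becomes negative exactly at step $N$, at which point $\lambda_{N+1}^\sharp=-1$, so $\cT(\bm\lambda^\sharp)>0$ and this point is a genuine minimizer of $f$ in $[0,1/2]$ distinct from $0$. To conclude $f(\cT(\bm\lambda^\sharp))<0$, I would use Lemma~\ref{Rademacher f lemma}: writing $f$ at this point via the Rademacher-expansion formula and comparing with $f(0)=\tfrac14\sum_m c_m(1-\sum_k 2^{-k})=\tfrac14\sum_m c_m\cdot 0 = 0$ — wait, more carefully, $f(0)=0$ directly from \eqref{Takagi class eq} — I would argue that flipping the sign pattern away from all-$+1$ in the direction dictated by the step condition for the minimum strictly decreases $f$ below its value at the all-$+1$ point; this is essentially the strict version of the optimality in Corollary~\ref{smallest largest cor}/Remark~\ref{minimizer rem}, i.e.\ since $\cT(\bm\lambda^\sharp)$ is \emph{a} minimizer and it differs from the point $0$ whose value is $0$, and since $0$ is not itself a minimizer (the step condition for the minimum fails at index $N+1$ for the all-$+1$ sequence, because $\lambda_{N+1}\sum_{m=0}^{N}2^mc_m\lambda_m = 1\cdot S_N<0$ violates the ``$\ge 0$'' requirement), we get $f(\cT(\bm\lambda^\sharp))<f(0)=0$.

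The main obstacle I anticipate is the careful bookkeeping in the converse direction: one must verify both that the all-$+1$ sequence genuinely fails the minimizer step condition at the first index $N+1$ where $S_N<0$ (so that $0$ is strictly not a minimum), and that the recursively built $\bm\lambda^\sharp$ does produce a point strictly less than the value at $0$ rather than merely not exceeding it — i.e.\ upgrading the ``is a minimizer'' statement to a strict inequality against the specific competitor $t=0$. This is where invoking Theorem~\ref{general Takagi step cond thm} (in its minimizer form) is essential: $t=0$ fails condition (c) of that theorem's analogue, hence is not a minimizer, hence $f$ attains a strictly smaller value, which by construction is realized at $\cT(\bm\lambda^\sharp)$.
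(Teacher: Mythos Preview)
Your proposal is correct and follows essentially the same approach as the paper, but with considerable detours. The paper compresses the entire argument into a single chain of equivalences: $f\ge 0$ if and only if $t=0$ is the smallest minimizer of $f$ in $[0,1/2]$ (since $f(0)=0$ and $0$ is the leftmost point), which by Remark~\ref{minimizer rem} holds if and only if $\cT(\bm\lambda^\sharp)=0$, i.e.\ $\lambda_n^\sharp=+1$ for all $n$; unwinding the recursion for $\bm\lambda^\sharp$ this is exactly $\sum_{m=0}^{n}2^mc_m\ge 0$ for all $n\ge 0$. Both directions are thus handled at once. Your separate treatment of the converse via Theorem~\ref{general Takagi step cond thm} is valid but unnecessary: once you know $\cT(\bm\lambda^\sharp)>0$ is the \emph{smallest} minimizer, $0$ cannot be a minimizer simply because $0<\cT(\bm\lambda^\sharp)$, and hence $\min f<f(0)=0$ immediately. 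Similarly, your initial foray with $\bm\lambda^\flat$ and Lemma~\ref{Rademacher f lemma} can be dropped.
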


\begin{proof}We have $f\ge0$ if and only if $t=0$ is the smallest minimizer of $f$. By Remark~\ref{minimizer rem}, this is equivalent to $\lambda_n^\sharp=+1$ for all $n$.
\end{proof}

Our method also allows to determine the cardinality of the set of maximizers of $f$. This is done in the following proposition.

\begin{proposition}\label{Takagi class number of max prop}
For $\bm\rho^\sharp$ as in \eqref{rho+- eq}, let
$$\cZ:=\Big\{n\in\bN_0\,\Big|\,\sum_{m=0}^n2^mc_m\rho_m^\sharp=0\Big\}.
$$
Then the number of $\{-1,+1\}$-valued sequences $\bm\rho$ that satisfy the step condition and $\rho_0=+1$ is $2^{|\cZ|}$ (where $2^{\aleph_0}$ denotes as usual the cardinality of the continuum). In particular, the number of maximizers of $f$ in $[0,1/2]$ is $2^{|\cZ|}$ less the number of maximizers in $(0,1/2)$ that are dyadic rationals.
\end{proposition}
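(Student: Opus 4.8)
The plan is to combine Theorem~\ref{general Takagi step cond thm} (which identifies maximizers with Rademacher expansions satisfying the step condition) with a careful count of the branching structure in the recursive definition \eqref{rho+- eq}. First I would show that every $\{-1,+1\}$-valued sequence $\bm\rho$ satisfying the step condition with $\rho_0=+1$ is obtained by the following process: set $\rho_0=+1$, and for $n\in\bN$, let $S_{n-1}:=\sum_{m=0}^{n-1}2^mc_m\rho_m$; if $S_{n-1}>0$ we are forced to take $\rho_n=-1$, if $S_{n-1}<0$ we are forced to take $\rho_n=+1$, and if $S_{n-1}=0$ then \emph{either} choice of $\rho_n\in\{-1,+1\}$ is compatible with the step condition. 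Thus the set of such sequences is exactly the set of choices at the free indices, i.e.\ those $n\in\bN_0$ for which the partial sum $\sum_{m=0}^{n}2^mc_m\rho_m$ vanishes, \emph{with $\bm\rho$ defined by the canonical branch-and-choose recursion}. The key point to nail down is that this set of free indices is the same for every such sequence, and coincides with $\cZ$ as defined via $\bm\rho^\sharp$: indeed, as long as no zero partial sum has yet been encountered, the recursion is deterministic and agrees with both $\bm\rho^\flat$ and $\bm\rho^\sharp$; and once a zero is hit at index $n$, the partial sum $S_n$ equals $S_{n-1}=0$ regardless of the choice of $\rho_n$, so the process ``resets'' and the next free index does not depend on the choice just made. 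An induction on the successive elements of $\cZ$ then shows that $\cZ$ is choice-independent and that the number of step-condition sequences with $\rho_0=+1$ is $2^{|\cZ|}$, with the convention that $2^{\aleph_0}$ is the cardinality of the continuum when $\cZ$ is infinite.

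For the second assertion, I would use that $\cT$ maps $\{-1,+1\}$-valued sequences with $\rho_0=+1$ into $[0,1/2]$, and that $\cT(\bm\rho)=\cT(\bm\rho')$ for distinct $\bm\rho,\bm\rho'$ precisely when $\cT(\bm\rho)$ is a dyadic rational in $(0,1)$, in which case exactly two Rademacher expansions collapse to the same point (one eventually constant $+1$, one eventually constant $-1$), both of which start with $\rho_0=+1$ when the point lies in $(0,1/2)$. By Theorem~\ref{general Takagi step cond thm}(c), the maximizers of $f$ in $[0,1/2]$ are exactly the images under $\cT$ of the $2^{|\cZ|}$ step-condition sequences with $\rho_0=+1$; the map $\cT$ restricted to these is injective except over dyadic-rational maximizers in $(0,1/2)$, each of which is hit exactly twice. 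Hence the number of maximizers in $[0,1/2]$ is $2^{|\cZ|}$ minus the number of dyadic-rational maximizers in $(0,1/2)$, which is the claim. One should also check the boundary cases $t=0$ and $t=1/2$: $t=0$ has the unique Rademacher expansion $(+1,+1,\dots)$ and $t=1/2$ has the unique expansion with $\rho_0=+1$ being $(-1,-1,\dots)$ wait---more carefully, $t=1/2$ corresponds to $\rho_0=-1$ and is not in the $\rho_0=+1$ family, while $t=0$ is, so no double-counting occurs at the endpoints and the correction term only involves $(0,1/2)$.

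The main obstacle I anticipate is the choice-independence of $\cZ$: one must argue rigorously that whenever the recursion \eqref{rho+- eq} (or its free-choice variant) encounters a vanishing partial sum, the subsequent trajectory of partial sums---and in particular the location of the next zero---is unaffected by the binary choice made at that index, because the partial sum value immediately after the choice is the same ($=0$) in both branches. This is intuitively clear but requires a clean inductive formulation: one orders $\cZ=\{z_1<z_2<\cdots\}$ and proves by induction on $j$ that the initial segment $(\rho_0,\dots,\rho_{z_j})$ of any step-condition sequence is determined up to the $j$ free choices at $z_1,\dots,z_{j-1}$, and that $z_{j+1}$ (the next free index, or $\infty$) is the same for all such segments. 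The remaining ingredients---the structure of Rademacher expansions of dyadic rationals and the injectivity of $\cT$ off the dyadics---are standard and essentially already recorded in the discussion preceding Lemma~\ref{Rademacher f lemma}.
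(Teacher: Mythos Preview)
Your overall strategy---identifying free binary choices at the indices where the running partial sum vanishes and counting $2^{|\cZ|}$ resulting sequences---is the same as the paper's, which inducts on $|\cZ|$ and uses Lemma~\ref{sign lemma} together with the sign symmetry $\bm\rho\mapsto-\bm\rho$ on the tail. However, your resolution of what you correctly flag as the main obstacle is wrong.

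You write that ``the partial sum value immediately after the choice is the same ($=0$) in both branches.'' This is false. If $S_n=\sum_{m=0}^n2^mc_m\rho_m=0$ and you then choose $\rho_{n+1}\in\{-1,+1\}$ freely, the next partial sum is $S_{n+1}=2^{n+1}c_{n+1}\rho_{n+1}$, which takes \emph{opposite} nonzero values in the two branches whenever $c_{n+1}\neq0$. So the process does not ``reset'' to a common state. What actually makes the zero set choice-independent is the sign symmetry: if $S_n=0$, then replacing $(\rho_{n+1},\rho_{n+2},\dots)$ by $(-\rho_{n+1},-\rho_{n+2},\dots)$ negates every subsequent partial sum, preserving the step condition and the location of all future zeros. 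This is exactly the mechanism the paper invokes (``if $\bm\rho$ satisfies the step condition, then so does $-\bm\rho$''). Once you replace your incorrect justification with this symmetry argument, your inductive scheme goes through and matches the paper's proof.

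A minor slip: $t=1/2$ \emph{does} have a Rademacher expansion with $\rho_0=+1$, namely $(+1,-1,-1,\dots)$; the other expansion is $(-1,+1,+1,\dots)$. Your conclusion that no double-counting occurs at the endpoints is still correct, since $t=1/2$ (like $t=0$) has a \emph{unique} expansion with $\rho_0=+1$, but the stated reason is wrong.
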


\begin{example}\label{squaredex} Consider the function $f$ with $c_m=1/(m+1)^2$, which was considered in~\cite{HataYamaguti}. We claim that it
	has exactly two maximizers at $t={11}/{24}$ and $t={13}/{24}$. See Figure~\ref{squaredex fig}
 for an illustration. To prove our claim, we need to identify the sequence $\bm\rho^\sharp$ and show that the sums in \eqref{rho+- eq} never vanish.  A short computation yields  that $\rho^\sharp_0=1$, $\rho^\sharp_1=-1=\rho^\flat_1$, and $\rho^\sharp_2=-1=\rho^\flat_2$. To simplify the notation, we let $\bm\rho:=\bm\rho^\sharp$ and define
	$$R_n:=\sum_{m=0}^n\frac{2^m}{(m+1)^2}\rho_m.
	$$
	Next, we prove by induction on $n$ that for $n\ge2$,\begin{align}
	\rho_{2n-1}=-1\quad&\text{and}\quad \rho_{2n}=+1,\label{square_example_claim 1}
\\
	-\frac{2^{2n}}{(2n+1)^2} <R_{2n-1} < 0\quad&\text{and}\quad	0 < R_{2n} < \frac{2^{2n+1}}{(2n+2)^2}.\label{square_example_claim 2}
	\end{align} 
	To establish the case $n=2$, note first that $R_2=1/{18}$ and hence  $\rho_3=-1$. It follows that $R_3=1/18-8/16=-4/9$. This gives in turn that $\rho_4=+1$ and $R_4=-4/9+16/25=44/225$. This establishes \eqref{square_example_claim 1} and \eqref{square_example_claim 2} for $n=2$. Now suppose that our claims have been established for all $k$ with $2\le k\le n$. Then the second inequality in \eqref{square_example_claim 2} yields $\rho_{2n+1}=-1$ and in turn
	$$R_{2n+1}=R_{2n}-\frac{2^{2n+1}}{(2n+2)^2}>-\frac{2^{2n+1}}{(2n+2)^2}>-\frac{2^{2n+2}}{(2n+3)^2}\quad\text{and}\quad R_{2n+1}=R_{2n}-\frac{2^{2n+1}}{(2n+2)^2}<0.
	$$
	This yields $\rho_{2n+2}=+1$, from which we get as above that 
$$0<R_{2n+2}=R_{2n+1}+\frac{2^{2n+2}}{(2n+3)^2}<\frac{2^{2n+2}}{(2n+3)^2}<\frac{2^{2n+3}}{(2n+4)^2}.
$$
This proves our claims.  Furthermore, \eqref{Psi eq} yields that the unique maximizer in $[0,1/2]$ is given by
		\begin{equation*}
			T(\bm\rho) = \frac{1}{4} + \frac{1}{8} + \frac{1}{16}\sum_{n = 0}^{\infty} \frac{1}{4^n} = \frac{11}{24}.
		\end{equation*} 
\end{example}
\begin{figure}[h]
\begin{center}
   \includegraphics[width=0.4\textwidth]{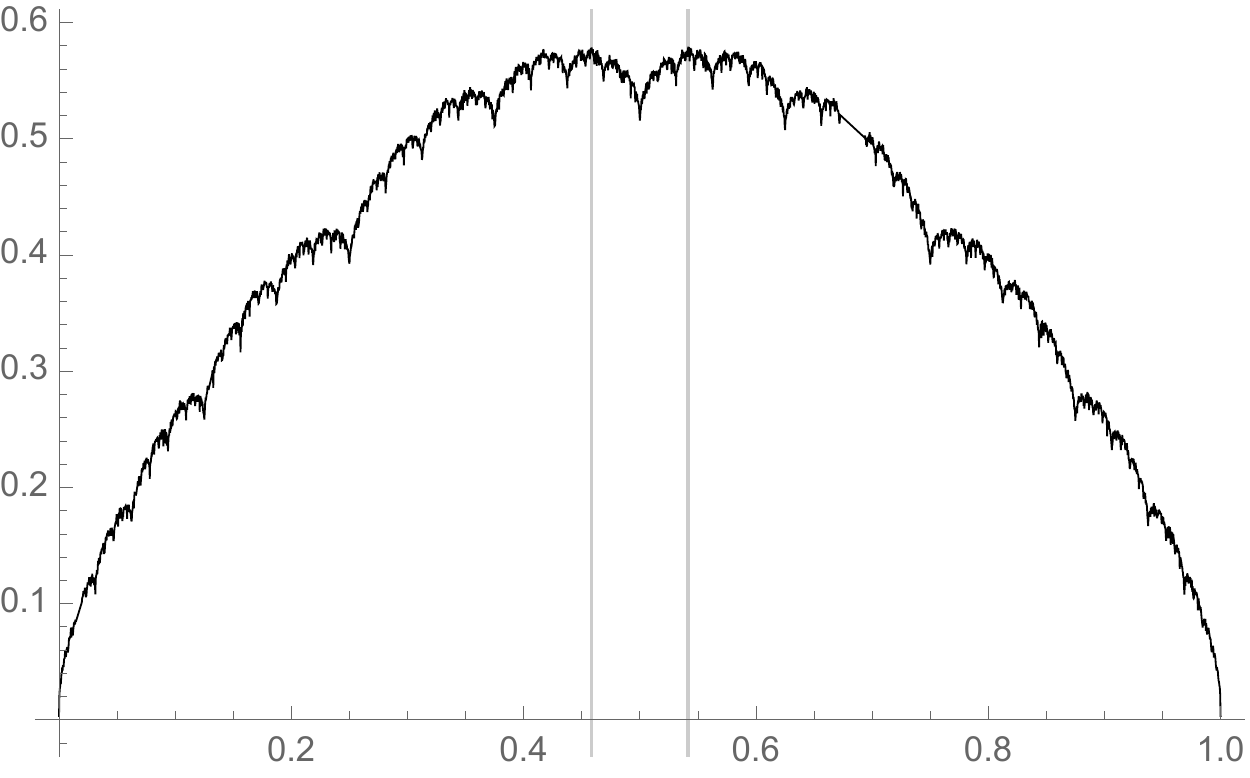}
   \caption{The function with $c_m=1/(m+1)^2$ analyzed in Example~\ref{squaredex}. The vertical lines correspond to the two maxima at ${11}/{24}$ and at ${13}/{24}$.}\label{squaredex fig}
\end{center}
\end{figure}

\section{Global extrema of the Takagi--Landsberg functions}\label{TL section}

	 The Takagi--Landsberg function with parameter $\alpha\in(-2,2)$ is given by 
	 \begin{equation}\label{Takagi-landsberg alpha parameterization eq}
f_\alpha(t):=\sum_{m=0}^\infty\frac{\alpha^m}{2^m}\phi(2^mt),\qquad t\in[0,1].
	 \end{equation}
	In the case $\alpha=1$, the function $f_1$ is the classical Takagi function, which was first  introduced by Takagi~\cite{Takagi} and  later rediscovered many times; see, e.g., the surveys~\cite{AllaartKawamura} and~\cite{Lagarias}. The class of functions $f_\alpha$ with $-2<\alpha<2$ is sometimes also called the exponential Takagi class. See Figure~\ref{TL fig} for an illustration.
\begin{figure}[h]
	\begin{center}
\begin{minipage}[b]{6cm}
\begin{overpic}[width=6cm]{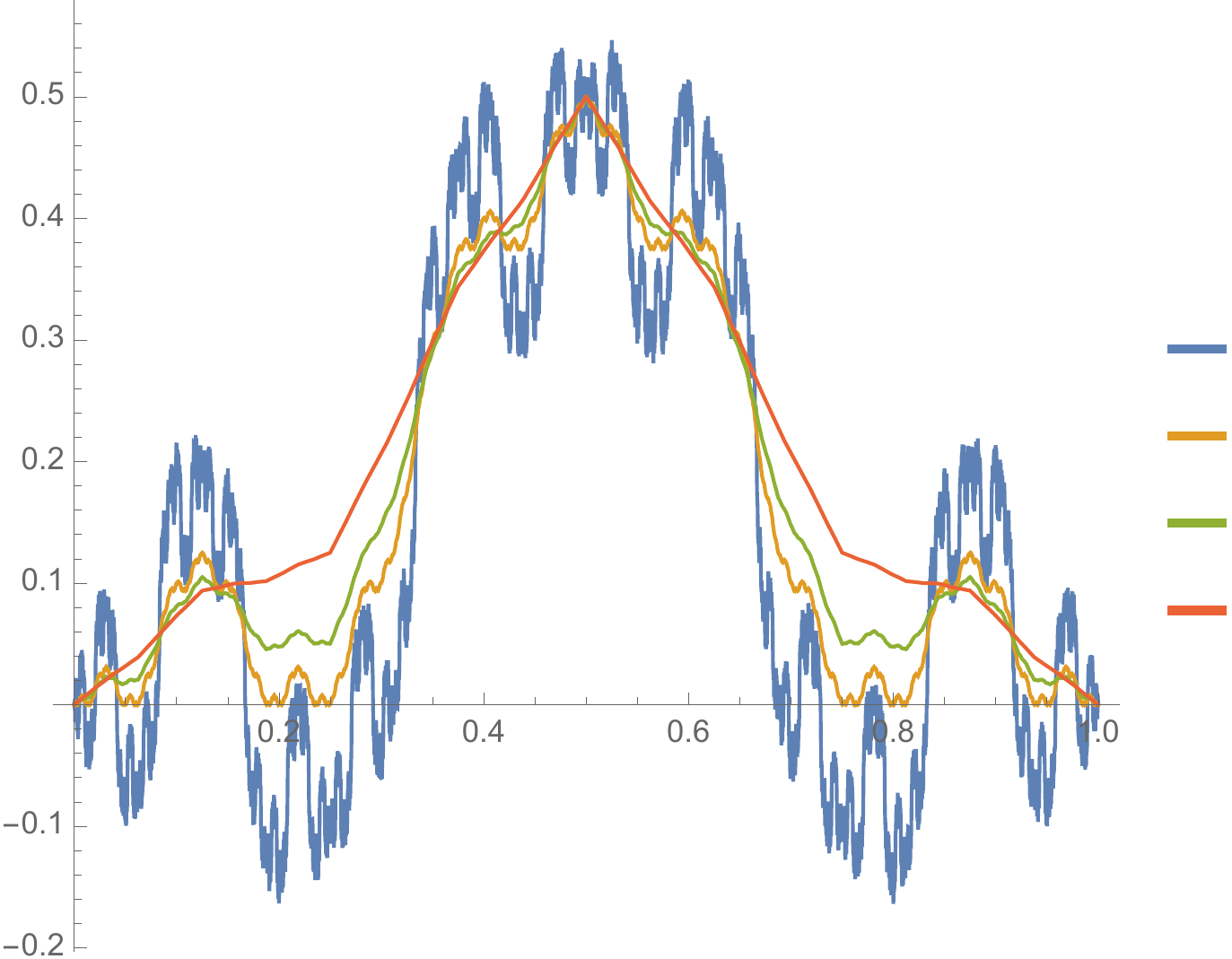}
\put(101,48){\footnotesize$\sqrt2$}
\put(101,41){\footnotesize$1$}
\put(101,34){\footnotesize$4/5$}
\put(101,27){\footnotesize$1/2$}
\end{overpic}
\end{minipage}\qquad\ \
\begin{minipage}[b]{6cm}
\begin{overpic}[width=5.2cm]{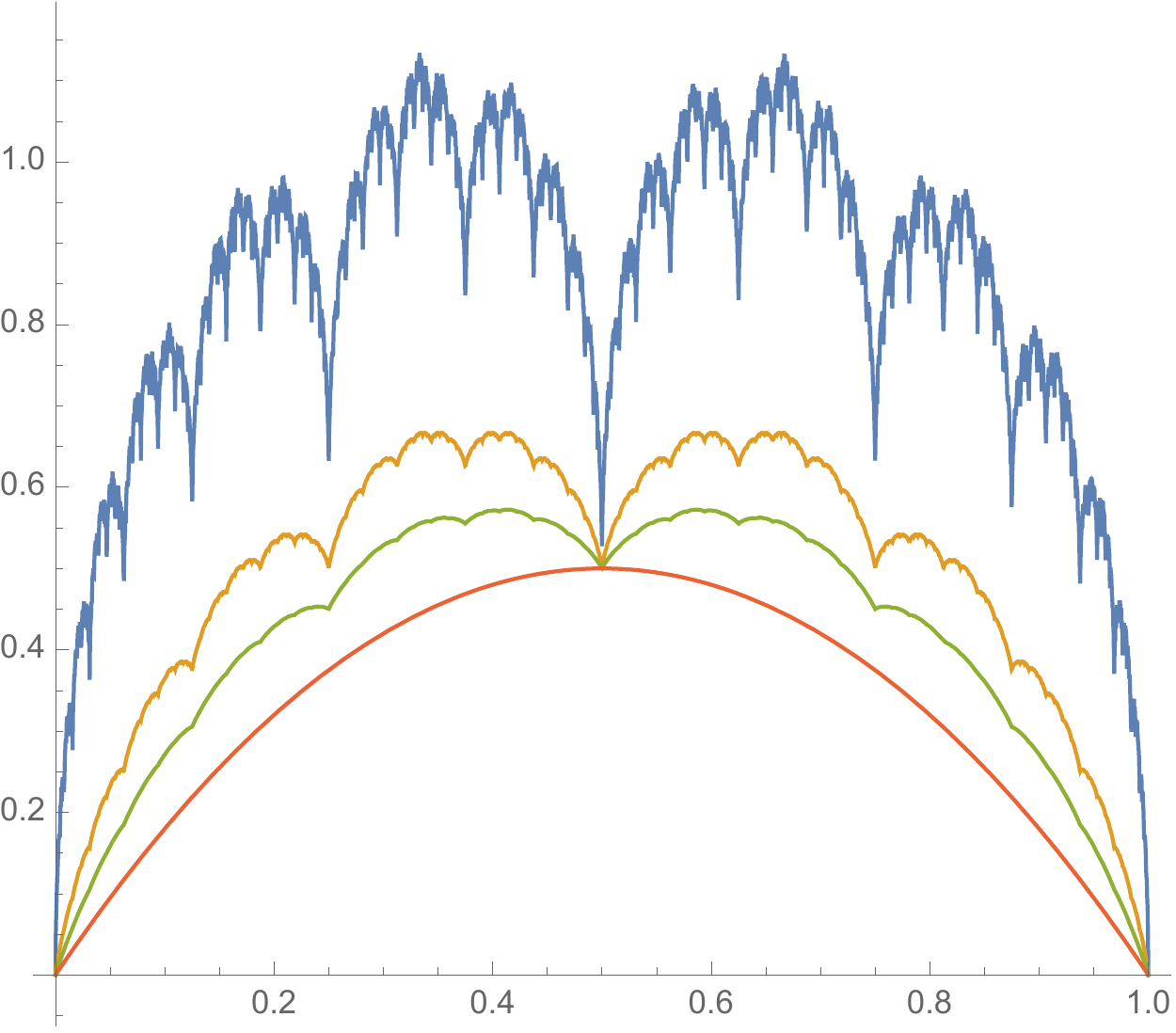}
\end{overpic}
\end{minipage}
\end{center}
\caption{Takagi--Landsberg functions $f_{-\alpha}$ (left) and $f_\alpha$ (right) for four different values of $\alpha$.}\label{TL fig}
\end{figure}

	By letting $c_m:=\alpha^m2^{-m}$, we see that the results from Section~\ref{Takagi class section} apply to the function $f_\alpha$. In particular,  	Theorem~\ref{general Takagi step cond thm}
 characterizes the maximizers of $f_\alpha$ in terms of a step condition satisfied by their Rademacher expansions. 
  Let us restate the corresponding Definition~\ref{gen step condition def} in our present situation.
 
 \begin{definition}\label{exponential Takagi step cond def}Let $\alpha\in(-2,2)$. A $\{-1,+1\}$-valued sequence $(\rho_m)_{m\in\bN_0}$   satisfies the \emph{step condition for $\alpha$} if 
$$\rho_{n}\sum_{m=0}^{n-1}\alpha^m\rho_m\le0\qquad\text{for all $n\in\bN$.}
$$
 \end{definition}

As in \eqref{rho+- eq}, we define  recursively the following pair of sequences $\bm\rho^{\flat}(\alpha)$ and $\bm\rho^{\sharp}(\alpha)$. We let $\rho^{\flat}_0(\alpha)=\rho^{\sharp}_0(\alpha)=1$ and, for $n\in\bN$,
\begin{equation}\label{rho+- eq bis}
\rho_n^\flat(\alpha)=\begin{cases}+1&\text{if $\sum_{m=0}^{n-1}\alpha^m\rho^\flat_m(\alpha)<0$,}\\
-1&\text{otherwise,}
\end{cases}\qquad \qquad 
\rho_n^\sharp(\alpha)=\begin{cases}+1&\text{if $\sum_{m=0}^{n-1}\alpha^m\rho^\sharp_m(\alpha)\le0$,}\\
-1&\text{otherwise.}
\end{cases}
\end{equation}
Then we define
\begin{equation*}
\tau^\flat(\alpha):=\cT(\bm\rho^\flat(\alpha))\qquad \text{and}\qquad \tau^\sharp(\alpha):=\cT(\bm\rho^\sharp(\alpha)),
\end{equation*}
where $\cT$ is as in \eqref{Psi eq}. It follows from Corollary~\ref{smallest largest cor} that $\tau^\flat(\alpha)$ is the largest and $\tau^\sharp(\alpha)$ is the smallest maximizer of $f_\alpha$ in $[0,1/2]$. 
We start with the following general result.

\begin{proposition}\label{continuity prop} For $\alpha\in(-2,2)$, the following conditions are equivalent.
\begin{enumerate}
\item\label{continuity prop (a)}  The function $f_\alpha$ has a unique maximizer in $[0,1/2]$.
\item\label{continuity prop (b)} $\tau^\flat(\alpha)=\tau^\sharp(\alpha)$.
\item\label{continuity prop (c)} There exists no $n\in\bN$ such that $\sum_{m=0}^n\alpha^m\rho_m^\sharp(\alpha)=0$.
\item\label{continuity prop (d)} The functions $\tau^\flat$ and $\tau^\sharp$ are continuous at $\alpha$.
\end{enumerate}
\end{proposition}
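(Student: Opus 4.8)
The plan is to prove $(a)\Leftrightarrow(b)$, then $(b)\Leftrightarrow(c)$, then $(b)\Leftrightarrow(d)$, with the whole argument hinging on one short computation that I would isolate first. Equivalence $(a)\Leftrightarrow(b)$ is immediate: by Corollary~\ref{smallest largest cor}, the set of maximizers of $f_\alpha$ in $[0,1/2]$ is nonempty with smallest element $\tau^\sharp(\alpha)$ and largest element $\tau^\flat(\alpha)$, and a nonempty set of reals is a singleton precisely when its infimum and supremum coincide. The preliminary computation I would record is: \emph{for $\alpha\in(0,1/2]$ one has $\rho^\sharp_m(\alpha)=-1$ for every $m\ge1$, and $\sum_{m=0}^n\alpha^m\rho^\sharp_m(\alpha)=(1-2\alpha+\alpha^{n+1})/(1-\alpha)>0$, so this partial sum never vanishes.} This is a one-line induction from the recursion~\eqref{rho+- eq bis}: as long as all preceding partial sums are positive the next digit is $-1$, and the stated closed form is positive because $1-2\alpha\ge0$ and $\alpha^{n+1}>0$. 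I will call this fact $(\star)$; it is what rules out a degenerate case that otherwise threatens both nontrivial implications.

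For $(b)\Leftrightarrow(c)$: comparing the two branches in~\eqref{rho+- eq bis}, an induction on $n$ shows that $\bm\rho^\flat(\alpha)$ and $\bm\rho^\sharp(\alpha)$ coincide as long as none of the partial sums $\sum_{m=0}^n\alpha^m\rho^\sharp_m(\alpha)$ equals $0$, and that they first diverge at an index $n+1$ where such a sum vanishes, taking there the values $-1$ and $+1$ respectively. Thus $(c)$ forces $\bm\rho^\flat(\alpha)=\bm\rho^\sharp(\alpha)$ and hence $(b)$. For the converse, if $(c)$ fails let $n$ be the least index with $\sum_{m=0}^n\alpha^m\rho^\sharp_m(\alpha)=0$; then the two sequences agree up to index $n$ and differ at $n+1$ as just described. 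If nonetheless $\tau^\flat(\alpha)=\tau^\sharp(\alpha)$, then (since $\tau^\flat\ge\tau^\sharp$ always) $\bm\rho^\flat(\alpha)$ and $\bm\rho^\sharp(\alpha)$ must be the two Rademacher expansions of one dyadic rational, and given where they differ this is only possible if $\rho^\flat_m(\alpha)=+1$ and $\rho^\sharp_m(\alpha)=-1$ for all $m\ge n+2$. Feeding this tail back into~\eqref{rho+- eq bis} — the relevant partial sums become $\alpha^{n+1}$ and $\alpha^{n+1}(1-\alpha-\cdots-\alpha^j)$, all of which must be positive — forces $\alpha\in(0,1/2]$, contradicting $(\star)$. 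Hence $(b)\Leftrightarrow(c)$. (Alternatively one can run the counting argument behind Proposition~\ref{Takagi class number of max prop}, again invoking $(\star)$ to exclude the same degenerate dyadic case.)

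For $(b)\Leftrightarrow(d)$: I would first note that $\alpha_k\to\alpha$ implies $f_{\alpha_k}\to f_\alpha$ uniformly on $[0,1]$, since on a compact neighborhood of $\alpha$ one bounds $\sum_m\big|(\alpha_k/2)^m-(\alpha/2)^m\big|\le\tfrac12|\alpha_k-\alpha|\sum_m m\,r^{m-1}$ with $r<1$. Consequently the set-valued map $\alpha\mapsto M(\alpha):=\{\text{maximizers of }f_\alpha\text{ in }[0,1/2]\}$ has closed graph, so $\tau^\flat=\max M$ is upper semicontinuous, $\tau^\sharp=\min M$ is lower semicontinuous, and $\tau^\sharp\le\tau^\flat$ pointwise. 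If $(b)$ holds, then $\limsup_{\beta\to\alpha}\tau^\flat(\beta)\le\tau^\flat(\alpha)=\tau^\sharp(\alpha)\le\liminf_{\beta\to\alpha}\tau^\sharp(\beta)\le\liminf_{\beta\to\alpha}\tau^\flat(\beta)$, so both functions are continuous at $\alpha$, i.e.\ $(d)$. If $(b)$ fails, take $n$ as above; since the partial sums of index $<n$ are nonzero, a finite induction shows that for $\beta$ in a small neighborhood of $\alpha$ the recursion produces $\rho^\sharp_m(\beta)=\rho^\sharp_m(\alpha)$ and $\rho^\flat_m(\beta)=\rho^\flat_m(\alpha)$ for all $m\le n$, while the choices at index $n+1$ are governed by the sign of $P(\beta):=\sum_{m=0}^n\beta^m\rho^\sharp_m(\alpha)$, a polynomial with leading coefficient $\pm1$ (so $P\not\equiv0$) and $P(\alpha)=0$. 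On a one-sided neighborhood of $\alpha$ on which $P>0$ we get $\rho^\sharp_{n+1}(\beta)=-1\ne+1=\rho^\sharp_{n+1}(\alpha)$; the first $n+1$ binary digits of $\tau^\sharp(\beta)$ are then frozen, so $\tau^\sharp(\beta)$ stays in a fixed dyadic interval whose lower endpoint $A$ satisfies $\tau^\sharp(\alpha)\le A$, with equality only if $\rho^\sharp_m(\alpha)=-1$ for all $m\ge n+2$, which by the same tail computation as before would force $\alpha\in(0,1/2]$ and contradict $(\star)$; hence $\tau^\sharp(\alpha)<A\le\tau^\sharp(\beta)$ on that side and $\tau^\sharp$ is discontinuous at $\alpha$. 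Symmetrically, on any one-sided neighborhood on which $P<0$ the function $\tau^\flat$ is discontinuous at $\alpha$. Since $P\not\equiv0$ vanishes at $\alpha$, at least one such one-sided situation occurs, so $(d)$ fails.

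The routine parts — the uniform-convergence estimate and the semicontinuity bookkeeping via the closed graph — are standard. The step I expect to be the main obstacle, and the one requiring real care, is ruling out the degenerate \emph{dyadic collapse} in which $\bm\rho^\flat(\alpha)$ and $\bm\rho^\sharp(\alpha)$ are distinct sequences with $\cT(\bm\rho^\flat(\alpha))=\cT(\bm\rho^\sharp(\alpha))$: this is exactly where $(\star)$ enters, and it must be invoked both to get $(b)\Rightarrow(c)$ and to guarantee that the jump of $\tau^\sharp$ (resp.\ $\tau^\flat$) in $\neg(b)\Rightarrow\neg(d)$ is genuine rather than vanishing.
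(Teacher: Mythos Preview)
Your argument is essentially correct and, on the equivalence $(b)\Leftrightarrow(c)$, more scrupulous than the paper's; but the claim that the tail condition ``forces $\alpha\in(0,1/2]$'' is not right as stated and needs a small patch. The condition you actually derive is that $\alpha^{n+1}\bigl(1-\alpha-\cdots-\alpha^{j}\bigr)>0$ for every $j\ge0$. Taking $j=0$ gives only $\alpha^{n+1}>0$, which for negative $\alpha$ merely forces $n$ to be odd; and for $\alpha\in[-1,0)$ with $n$ odd the remaining factors $1-\alpha-\cdots-\alpha^{j}=(1-2\alpha+\alpha^{j+1})/(1-\alpha)$ are all strictly positive (since $1-2\alpha>1\ge|\alpha^{j+1}|$ and $1-\alpha>0$), so the tail condition holds there as well. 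The fix is to state $(\star)$ on all of $[-1,1/2]$ rather than $(0,1/2]$: for such $\alpha$ one still has $\rho^\sharp_m(\alpha)=-1$ for all $m\ge1$, and the same closed form $(1-2\alpha+\alpha^{n+1})/(1-\alpha)$ is strictly positive throughout (your justification already covers $(0,1/2]$; for $\alpha\in[-1,0)$ use $1-2\alpha\ge1\ge|\alpha^{n+1}|$ with at least one inequality strict). Then the tail condition forces $\alpha\in[-1,1/2]$, and the extended $(\star)$ contradicts $\neg(c)$ as intended. The identical correction applies where you invoke $(\star)$ in the $\neg(b)\Rightarrow\neg(d)$ step.

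Regarding the comparison with the paper: the paper dispatches $(b)\Leftrightarrow(c)$ in one line by asserting that $(b)$ is equivalent to $\bm\rho^\flat(\alpha)=\bm\rho^\sharp(\alpha)$, never mentioning the dyadic-collapse possibility you isolate; you are right that this case must be excluded, and your $(\star)$-mechanism (once extended as above) does so. For the link with $(d)$, the paper proves $(c)\Rightarrow(d)$ by a direct digit-by-digit induction (each $\rho^\sharp_k(\beta)$ is locally constant in $\beta$ as long as no partial sum vanishes) and then argues, via uniform convergence of $f_\beta\to f_\alpha$, that discontinuity of $\tau^\sharp$ produces two distinct limit maximizers. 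Your route is different: $(b)\Rightarrow(d)$ comes from the closed-graph/semicontinuity sandwich, while $(d)\Rightarrow(b)$ is obtained through an explicit jump computation governed by the sign of the polynomial $P(\beta)=\sum_{m=0}^n\beta^m\rho^\sharp_m(\alpha)$ near its root $\alpha$. The paper's approach is lighter on machinery; yours has the advantage of making both directions of $(b)\Leftrightarrow(d)$ self-contained and of exhibiting the discontinuity under $\neg(c)$ concretely.
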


In the following subsections, we discuss the maximization of $f_\alpha$ for various regimes of $\alpha$.

\subsection{Global maxima for  $\alpha\in(-2,-1)$}\label{alpha<1 results section}

To the best of our knowledge, the case $\alpha\in(-2,-1)$ has not yet been discussed in the literature. Here, we give an explicit solution for both maximizers and maximum values in this regime. Before stating our corresponding result, we formulate the following elementary lemma.

\begin{lemma}\label{Littlewood neg root lemma}
For $n\in\bN$, the Littlewood polynomial $p_{2n}(x)=1-x-\cdots-x^{2n-1}-x^{2n}$ has a unique negative root $x_{n}$. Moreover,  the sequence $(x_{n})_{n\in\bN}$ is strictly increasing, belongs to $(-2,-1)$, and converges to $-1$ as $n\ua\infty$. 
\end{lemma}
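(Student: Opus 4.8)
The plan is to analyze the polynomial $p_{2n}(x)=1-x-x^2-\cdots-x^{2n}$ on the negative real axis directly, using the closed form obtained from the geometric series. For $x\neq 1$ we have $p_{2n}(x)=1-\dfrac{x-x^{2n+1}}{1-x}=\dfrac{(1-x)-(x-x^{2n+1})}{1-x}=\dfrac{1-2x+x^{2n+1}}{1-x}$. Since we only care about negative $x$, where $1-x>0$, the roots of $p_{2n}$ in $(-\infty,0)$ coincide with the roots of $q_n(x):=x^{2n+1}-2x+1$ there. First I would record the elementary facts $q_n(0)=1>0$, $q_n(-1)=-1+2+1=2>0$, and $q_n(-2)=-2^{2n+1}+4+1=5-2^{2n+1}<0$ for all $n\geq 1$; by the intermediate value theorem this already gives at least one root $x_n\in(-2,-1)$.

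Next I would prove uniqueness of the negative root by a monotonicity argument. On $(-\infty,0)$ we have $q_n'(x)=(2n+1)x^{2n}-2$, which vanishes only at the two points $x=\pm\big(2/(2n+1)\big)^{1/(2n)}$, both of which lie in $(-1,1)$; in particular $q_n'$ has exactly one zero $x^*_n$ in $(-1,0)$ and none in $(-\infty,-1)$. On $(-\infty,-1)$, $q_n'(x)=(2n+1)x^{2n}-2\geq (2n+1)-2>0$, so $q_n$ is strictly increasing there; combined with $q_n(-2)<0<q_n(-1)$ this pins down exactly one root in $(-2,-1)$ and shows $q_n$ stays positive on $[-1,0]$ except possibly dipping — but since $q_n(-1)=2$, $q_n(0)=1$ and $q_n$ has a single interior critical point $x^*_n\in(-1,0)$ which is a local maximum (as $q_n'$ changes sign from $+$ to $-$ there, because $x^{2n}$ is decreasing on $(-1,0)$), $q_n>0$ on all of $[-1,0]$. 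Hence $x_n$ is the \emph{unique} negative root of $p_{2n}$, and $x_n\in(-2,-1)$.

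For monotonicity of $(x_n)_n$ and the limit, I would compare consecutive polynomials at $x_n$. Since $x_n\in(-2,-1)$, we have $x_n^{2n+1}<0$ and $|x_n|>1$, so $x_n^{2n+3}=x_n^2\cdot x_n^{2n+1}<x_n^{2n+1}$ (multiplying a negative number by $x_n^2>1$ makes it more negative). Therefore $q_{n+1}(x_n)=x_n^{2n+3}-2x_n+1<x_n^{2n+1}-2x_n+1=q_n(x_n)=0$. Since $q_{n+1}$ is strictly increasing on $(-2,-1)$ and $q_{n+1}(x_n)<0=q_{n+1}(x_{n+1})$, we conclude $x_n<x_{n+1}$, i.e.\ the sequence is strictly increasing. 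Being increasing and bounded above by $-1$, it converges to some limit $L\in(-2,-1]$. To identify $L=-1$, suppose $L<-1$; then $|L|>1$ forces $|L|^{2n+1}\to\infty$, and since $L<0$ this means $L^{2n+1}\to-\infty$, so $q_n(L)=L^{2n+1}-2L+1\to-\infty$. But $x_n\leq L$ and $q_n$ is increasing on $(-2,-1)$, giving $q_n(L)\geq q_n(x_n)=0$, a contradiction. Hence $L=-1$.

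I do not expect a genuine obstacle here: every step is an elementary real-variable estimate. The only point requiring a little care is making the sign-of-derivative bookkeeping on $(-1,0)$ clean enough to rule out extra negative roots there — this is where one must use that $x^{2n}$ is strictly decreasing on $(-1,0)$ so that $q_n'$ has a single sign change — but this is routine. An alternative, even shorter route to uniqueness would be to apply Descartes' rule of signs to $p_{2n}(-x)=1+x+\cdots+x^{2n}-2x^{?}$... — actually $p_{2n}(-x)=1+x+\cdots+x^{2n}$ only if the signs worked out, which they do not, so I would instead apply Descartes directly to $q_n(x)=x^{2n+1}-2x+1$: it has two sign changes, hence at most two positive roots, and $q_n(-x)=-x^{2n+1}+2x+1$ has exactly one sign change, hence exactly one negative root — giving uniqueness immediately. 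I would likely present the Descartes argument for uniqueness and the geometric-series reduction plus the comparison $q_{n+1}(x_n)<q_n(x_n)$ for monotonicity and the limit, as this is the most economical packaging.
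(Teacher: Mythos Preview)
Your proposal is correct and follows essentially the same route as the paper: reduce to $q_n(x)=x^{2n+1}-2x+1$ via the geometric series, use the sign of $q_n'$ on $(-\infty,-1)$ together with the endpoint values to locate a unique negative root in $(-2,-1)$, compare $q_{n+1}(x_n)<q_n(x_n)=0$ for strict monotonicity, and argue by contradiction that the limit is $-1$. The only cosmetic difference is that the paper disposes of the interval $[-1,0)$ by the one-line inequality $q_n(x)=(1+x^{2n+1})-2x\ge -2x>0$, which is shorter than your critical-point analysis (and the Descartes alternative you sketch is also valid but not used in the paper).
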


Note that $-x_1= \frac12(1+\sqrt5)\approx 1.61803$ is the  golden ratio. Approximate numerical values for the next highest roots are $x_2\approx -1.29065$, $x_3\approx -1.19004$,  $x_4\approx -1.14118$, and $x_5\approx -1.11231$.

\begin{theorem} \label{alpha negative thm}Let $(x_{n})_{n\in\bN}$ be the sequence introduced in Lemma~\ref{Littlewood neg root lemma} and define $x_0:=-2$. Then, on $(x_n,x_{n+1})$,
the function $f_\alpha$ has exactly two maximizers in $[0,1]$, which are located at 
$$t_n:=\frac1{10}(5-4^{-n})\quad\text{and}\quad 1-t_n=\frac1{10}(5+4^{-n}).$$ 
If $\alpha=x_n$ for some $n\in\bN$, then $f_\alpha$ has exactly four maximizers in $[0,1]$, which are  located at $t_n$, $t_{n+1}$, $1-t_n$ and $1-t_{n+1}$. Moreover,
\begin{equation}\label{alpha negative thm minimum eq}
f_\alpha(t_n)=\frac{1}{10} (5-4^{-n})-\frac{4^{-n}}{10} \cdot\frac{ 3 \alpha^{2 n+3}+ \alpha^3-4 \alpha}{(1-\alpha)
   \left(\alpha^2-4\right)},
\end{equation}
and this is equal to the maximum value of $f_\alpha$ if $\alpha\in[x_n,x_{n+1}]$.
\end{theorem}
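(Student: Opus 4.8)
The plan is to split the theorem into a combinatorial part (location and number of maximizers, handled with the step--condition machinery of Section~\ref{Takagi class section}) and an analytic part (the value of the maximum, obtained from the self--similarity of $f_\alpha$). For the combinatorial part, recall that by Corollary~\ref{smallest largest cor} with $c_m=\alpha^m2^{-m}$ (so $2^mc_m=\alpha^m$) the smallest and largest maximizers of $f_\alpha$ in $[0,1/2]$ are $\cT(\bm\rho^\sharp(\alpha))$ and $\cT(\bm\rho^\flat(\alpha))$, and since $f_\alpha(1-t)=f_\alpha(t)$, every maximizer in $(1/2,1]$ is the reflection of one in $[0,1/2)$, with none equal to $1/2$ (as $t_n<1/2$). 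I would therefore identify the sequences of~\eqref{rho+- eq bis}: the candidate is $\bm\rho^{(n)}$ with $\rho_0=1$, $\rho_1=\dots=\rho_{2n+2}=-1$, and $(\rho_m)_{m\ge 2n+3}$ periodic with block $(+1,+1,-1,-1)$; summing the geometric series in~\eqref{Psi eq} gives $\cT(\bm\rho^{(n)})=\tfrac12-\tfrac{4^{-n}}{10}=t_n$. One then checks by induction that $\bm\rho^\flat(\alpha)=\bm\rho^\sharp(\alpha)=\bm\rho^{(n)}$ for $\alpha\in(x_n,x_{n+1})$: the induction step is a sign determination for $\sigma_k(\alpha):=\sum_{m=0}^{k}\alpha^m\rho^{(n)}_m$, where for $k\le 2n+2$ the identity $1-x-\dots-x^\ell=\tfrac{1-2x+x^{\ell+1}}{1-x}$ shows $\sigma_k(\alpha)=1-\alpha-\dots-\alpha^k>0$ for odd $k$ and, for even $k=2j$, has the sign of $1-2\alpha+\alpha^{2j+1}$, which by Lemma~\ref{Littlewood neg root lemma} is positive for $\alpha\in(x_j,-1)$ and negative for $\alpha\in(-2,x_j)$; since the $x_j$ increase, all signs come out as the recursion requires on $(x_n,x_{n+1})$, and an analogous geometric computation (with a factor $1+\alpha$) handles the tail $k>2n+2$, none of the $\sigma_k(\alpha)$ vanishing. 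Hence $t_n$ is the unique maximizer of $f_\alpha$ in $[0,1/2]$, so $t_n,1-t_n$ are its only maximizers. At $\alpha=x_n$ the same bookkeeping shows that exactly one partial sum of $\bm\rho^\sharp(x_n)$ vanishes, $\sigma_{2n}(x_n)=1-x_n-\dots-x_n^{2n}=0$, so the set $\cZ$ of Proposition~\ref{Takagi class number of max prop} is a singleton; that proposition gives $2^{|\cZ|}=2$ maximizers in $[0,1/2]$ (neither dyadic, since the relevant $t$'s have denominators divisible by $5$), which following~\eqref{rho+- eq bis} through its two choices at index $2n$ are $\cT(\bm\rho^\sharp(x_n))$ and $\cT(\bm\rho^\flat(x_n))$, i.e.\ $t_{n-1}$ and $t_n$; reflection yields the four maximizers $t_{n-1},t_n,1-t_{n-1},1-t_n$.

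For the value I would use $f_\alpha(t)=\phi(t)+\tfrac\alpha2 f_\alpha(2t)$, valid for the $1$--periodic extension of $f_\alpha$, together with $f_\alpha(1-t)=f_\alpha(t)$. Writing $v_n:=\tfrac{4^{-n}}{5}$, one has $0\le t_n<\tfrac12$, $2t_n=1-v_n$, $2v_n\le\tfrac12$, and $4v_n=v_{n-1}$; one use of self--similarity plus the reflection gives $f_\alpha(t_n)=t_n+\tfrac\alpha2 f_\alpha(v_n)$, and two more uses give, for $n\ge1$, the linear recursion $f_\alpha(v_n)=(1+\alpha)v_n+\tfrac{\alpha^2}{4}f_\alpha(v_{n-1})$. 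Its particular solution is $v_n/(1-\alpha)$ and its homogeneous solutions are multiples of $(\alpha^2/4)^n$, so $f_\alpha(v_n)=\tfrac{v_n}{1-\alpha}+F(\alpha^2/4)^n$, with $F$ fixed by $f_\alpha(v_0)=f_\alpha(1/5)=\tfrac{4(1+\alpha)}{5(4-\alpha^2)}$ (obtained by solving $f_\alpha(1/5)=\tfrac15+\tfrac\alpha2 f_\alpha(2/5)$ and $f_\alpha(2/5)=\tfrac25+\tfrac\alpha2 f_\alpha(1/5)$), which gives $F=\tfrac{-3\alpha^2}{5(4-\alpha^2)(1-\alpha)}$; substituting back into $f_\alpha(t_n)=t_n+\tfrac\alpha2 f_\alpha(v_n)$ and simplifying with $4-\alpha^2=(2-\alpha)(2+\alpha)$ produces~\eqref{alpha negative thm minimum eq}. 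That this equals $\max f_\alpha$ is immediate for $\alpha\in(x_n,x_{n+1})$ since $t_n$ is then the maximizer; at the endpoints one uses that $p_{2k}(x_k)=0$ rewrites (by the same geometric summation) as $x_k^{2k+1}=2x_k-1$, and feeding $\alpha^{2k+1}=2\alpha-1$ into~\eqref{alpha negative thm minimum eq} makes $f_{x_k}(t_{k-1})$ and $f_{x_k}(t_k)$ both collapse to $\tfrac12+\tfrac{4^{-(k-1)}}{10}\cdot\tfrac{(2\alpha-1)(\alpha+1)}{(2-\alpha)(2+\alpha)}$, so at $\alpha=x_n$ and at $\alpha=x_{n+1}$ the quantity~\eqref{alpha negative thm minimum eq} equals the common value of $f_\alpha$ at its maximizers.

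I expect the main obstacle to be the sign analysis in the first part: establishing, uniformly in $\alpha\in(x_n,x_{n+1})$, the correct sign of every partial sum $\sigma_k(\alpha)$, since this forces one to combine the exact localization and monotonicity of the negative Littlewood roots from Lemma~\ref{Littlewood neg root lemma} with the period--$4$ structure of $\bm\rho^{(n)}$ (and to control the geometric tail). The analytic part is then routine once one is careful about which dyadic images of $t_n$ and $v_n$ lie in $[0,1/2]$, the only other non--mechanical point being the endpoint identities $f_{x_k}(t_{k-1})=f_{x_k}(t_k)$, which rely on the defining relation $x_k^{2k+1}=2x_k-1$.
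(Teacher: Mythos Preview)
Your combinatorial argument for locating and counting the maximizers follows the paper's route: the paper carries out exactly the sign analysis of the partial sums $\sigma_k(\alpha)$ that you sketch, handling the initial segment $k\le 2n+2$ via the polynomials $p_{2j}$ and Lemma~\ref{Littlewood neg root lemma}, and the period-$4$ tail by an induction that repeatedly uses the factorizations $1\pm\alpha-\alpha^2\mp\alpha^3=(1\pm\alpha)^2(1\mp\alpha)$ to push the sign through each block. (Your conclusion that the maximizers at $\alpha=x_n$ are $t_{n-1},t_n$ rather than the stated $t_n,t_{n+1}$ is in fact what the step-condition analysis produces.) Your computation of $f_\alpha(t_n)$, however, takes a genuinely different route. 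The paper evaluates the series $\sum_m(\alpha/2)^m\phi(2^mt_n)$ directly: it uses periodicity and evenness of $\phi$ to rewrite $\phi(2^mt_n)=\phi(2^{m-2n}/10)$, splits the sum at $m=2n+2$ into a finite geometric piece and an eventually $2$-periodic tail, sums each, and simplifies. Your functional-equation approach is cleaner: writing $f_\alpha(t_n)=t_n+\frac\alpha2 f_\alpha(v_n)$ and then exploiting $4v_n=v_{n-1}$ turns the problem into a first-order linear recursion in $n$ whose particular and homogeneous parts are one-line computations, with the anchor $f_\alpha(1/5)$ coming from a $2\times2$ linear system; this avoids the index bookkeeping of the direct summation and makes the endpoint identity $f_{x_k}(t_{k-1})=f_{x_k}(t_k)$ a transparent algebraic consequence of $x_k^{2k+1}=2x_k-1$. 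The paper's direct summation, on the other hand, makes the decomposition of $f_\alpha(t_n)$ into ``linear part plus exponential correction'' visible from the outset without first passing through the auxiliary sequence $f_\alpha(v_n)$.
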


\begin{remark}\label{alpha negative rem}
It is easy to see that the right-hand side of \eqref{alpha negative thm minimum eq} is strictly larger than $1/2$ for $\alpha\in(-2,-1)$. Moreover, it tends to $+\infty$ for $\alpha\downarrow-2$ and to $1/2$ for $\alpha\ua-1$.
\end{remark}

\subsection{Global maxima for  $\alpha\in[-1,1/2]$}\label{alpha in[-1,1/2] Section}

Galkin and Galkina~\cite{GalkinGalkina} proved that for $\alpha\in[-1,1/2]$ the function $f_\alpha$ has a global maximum at $t=1/2$ with maximum value $f_\alpha(1/2)=1/2$. Here, we give a short proof of  this result by using our method and additionally establish the uniqueness of the maximizer. 

\begin{proposition}\label{alpha in -1 1/2 prop} For $\alpha\in[-1,1/2]$, the function $f_\alpha$ has the unique maximizer $t=1/2$ and the maximum value $f_\alpha(1/2)=1/2$.
\end{proposition}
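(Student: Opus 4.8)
The plan is to apply Corollary~\ref{smallest largest cor} (equivalently, Theorem~\ref{general Takagi step cond thm}) with $c_m=\alpha^m2^{-m}$, so that the relevant recursion is the one in \eqref{rho+- eq bis} with partial sums $S_n:=\sum_{m=0}^{n-1}\alpha^m\rho_m^\sharp(\alpha)$ and $S_n^\flat:=\sum_{m=0}^{n-1}\alpha^m\rho_m^\flat(\alpha)$. I want to show that for $\alpha\in[-1,1/2]$ both sequences $\bm\rho^\sharp(\alpha)$ and $\bm\rho^\flat(\alpha)$ equal $(+1,-1,+1,+1,+1,\dots)$, i.e.\ $\rho_0=+1$, $\rho_1=-1$, and $\rho_n=+1$ for all $n\ge2$; by \eqref{Psi eq} this Rademacher expansion encodes exactly $t=\tfrac14+\tfrac18+\tfrac1{16}+\cdots=\tfrac14+\tfrac14=\tfrac12$. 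Since then $\tau^\flat(\alpha)=\tau^\sharp(\alpha)=1/2$, Corollary~\ref{smallest largest cor} gives that $1/2$ is simultaneously the smallest and largest maximizer in $[0,1/2]$, hence the unique maximizer there; uniqueness on all of $[0,1]$ follows from the symmetry $f_\alpha(t)=f_\alpha(1-t)$ (immediate from the definition of $\phi$), which forces any maximizer outside $[0,1/2]$ to have its mirror image in $[0,1/2]$, so $1/2$ must be fixed, i.e.\ the unique global maximizer. Finally $f_\alpha(1/2)$ is computed directly: $\phi(2^m\cdot\tfrac12)=\phi(2^{m-1})=0$ for $m\ge1$ and $\phi(1/2)=1/2$, so only the $m=0$ term survives and $f_\alpha(1/2)=1/2$.

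The technical core is the induction showing $\rho_n=+1$ for $n\ge2$, which amounts to proving $S_n\le 0$ (for $\bm\rho^\sharp$) and $S_n^\flat<0$ (for $\bm\rho^\flat$) for all $n\ge2$, starting from $\rho_0=+1,\rho_1=-1$. With $\rho_0=+1$ we have $S_1=1>0$, forcing $\rho_1=-1$ (both $\flat$ and $\sharp$), and then $S_2=1-\alpha$. For $\alpha\le 1/2$ this is $\ge1/2>0$, which would give $\rho_2=-1$ — so I must be careful: I should instead track the relevant quantity $\rho_n S_n$, or note that the step condition in Definition~\ref{exponential Takagi step cond def} only constrains $\rho_n S_n\le 0$, and that once $\rho_1=-1$ the natural guess is $\rho_2=\cdots=+1$ giving from $n=2$ onward $S_{n+1}=S_n+\alpha^n\rho_n$. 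Let me recompute: with $\rho=(+1,-1,+1,+1,\dots)$, $S_n=1-\alpha+\alpha^2+\cdots+\alpha^{n-1}=1-\alpha+\frac{\alpha^2(1-\alpha^{n-2})}{1-\alpha}$ for $n\ge2$; the step condition requires $\rho_nS_n=S_n\le 0$ for $n\ge2$, i.e.\ I need $S_n\le0$. But $S_2=1-\alpha>0$ for $\alpha<1$. This contradiction means my candidate expansion is wrong, and the correct analysis is: $\rho_1=-1$, and then whatever makes $\rho_n S_n\le0$. Since $S_2=1-\alpha>0$, we need $\rho_2=-1$; then $S_3=1-\alpha-\alpha^2$, and for $\alpha\in[-1,1/2]$ one checks the sign and continues. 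The real claim to prove by induction is presumably that the partial sums stay positive, forcing $\rho_n=-1$ for all $n\ge1$, which gives $\cT(\bm\rho)=\sum_{n\ge1}2^{-(n+2)}\cdot 2 = \sum_{n\ge1}2^{-(n+1)}=1/2$ — and indeed $\rho=(+1,-1,-1,-1,\dots)$ is the other Rademacher expansion of $1/2$.

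So the corrected plan: prove by induction that for $\alpha\in[-1,1/2]$ one has $S_n=\sum_{m=0}^{n-1}\alpha^m\rho_m>0$ for all $n\ge1$ with $\rho_0=+1$, $\rho_m=-1$ for $m\ge1$, which by \eqref{rho+- eq bis} is exactly the assertion that $\bm\rho^\flat(\alpha)=\bm\rho^\sharp(\alpha)=(+1,-1,-1,\dots)$ and that the vanishing never occurs. The closed form is $S_n=1-(\alpha+\alpha^2+\cdots+\alpha^{n-1})=1-\alpha\frac{1-\alpha^{n-1}}{1-\alpha}=\frac{1-2\alpha+\alpha^n}{1-\alpha}$ for $n\ge2$ (and $S_1=1$). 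The denominator $1-\alpha>0$; the numerator $g_n(\alpha):=1-2\alpha+\alpha^n$ must be shown strictly positive for $\alpha\in[-1,1/2]$ and all $n\ge2$. \textbf{This sign estimate is the main obstacle.} For $\alpha\in[0,1/2]$: $g_n(\alpha)\ge 1-2\alpha+0\ge 0$ with equality only at $\alpha=1/2$ if $\alpha^n$ vanished, but $\alpha^n>0$ for $\alpha\in(0,1/2]$, and at $\alpha=0$, $g_n=1>0$; at $\alpha=1/2$, $g_n=2^{-n}>0$. For $\alpha\in[-1,0)$: if $n$ is even, $\alpha^n>0$ and $-2\alpha>0$, so $g_n>1>0$; if $n$ is odd, $g_n(\alpha)=1-2\alpha+\alpha^n\ge 1-2\alpha-|\alpha|^n\ge 1-2\alpha-1=-2\alpha>0$ since $|\alpha|\le1$ gives $|\alpha|^n\le1$, and $\alpha<0$. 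Hence $S_n>0$ for all $n\ge1$, so the recursion \eqref{rho+- eq bis} yields $\rho_n^\flat(\alpha)=\rho_n^\sharp(\alpha)=-1$ for all $n\ge1$ (the strict inequality also guarantees condition \eqref{continuity prop (c)} of Proposition~\ref{continuity prop}, re-confirming uniqueness). Therefore $\tau^\flat(\alpha)=\tau^\sharp(\alpha)=\cT(+1,-1,-1,\dots)=\sum_{n=1}^\infty 2^{-(n+2)}\cdot 2=\tfrac12$, which by Corollary~\ref{smallest largest cor} is the unique maximizer of $f_\alpha$ in $[0,1/2]$, and by the reflection symmetry $f_\alpha(t)=f_\alpha(1-t)$ it is the unique maximizer in $[0,1]$; the value $f_\alpha(1/2)=1/2$ follows from $\phi(2^m/2)=0$ for $m\ge1$.
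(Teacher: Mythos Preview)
Your proposal, once it self-corrects, is essentially the paper's proof: show inductively that $\bm\rho^\sharp(\alpha)=(+1,-1,-1,\dots)$ by verifying the partial sums $S_n=(1-2\alpha+\alpha^n)/(1-\alpha)$ are strictly positive on $[-1,1/2]$, whence $\tau^\sharp(\alpha)=1/2$ and uniqueness follows. Your sign analysis of the numerator $g_n(\alpha)=1-2\alpha+\alpha^n$ is in fact a bit more elementary than the paper's: on $[0,1/2]$ you simply observe that $1-2\alpha\ge0$ and $\alpha^n\ge0$ with at least one strictly positive, whereas the paper differentiates the numerator on $(0,1/2)$ to get monotonicity and then evaluates at the endpoint $\alpha=1/2$; on $[-1,0)$ your even/odd case split is likewise direct. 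The exploratory detour through the wrong candidate $(+1,-1,+1,+1,\dots)$ should of course be excised from a final write-up, but the destination and the route match the paper.
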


\begin{proof}[Proof of Proposition~\ref{alpha in -1 1/2 prop}] Since obviously $f_\alpha(1/2)=1/2$ for all $\alpha$, the result will follow if we can establish that $\tau^\sharp(\alpha)=1/2$ for all $\alpha\in[-1,1/2]$. This is the case if $\bm\rho:=\bm\rho^\sharp(\alpha)$ satisfies $\rho_n=-1$ for all $n\ge1$. We prove this by induction on $n$. The case $n=1$ follows immediately from $\rho_0=1$ and \eqref{rho+- eq bis}. If $\rho_1=\cdots=\rho_{n-1}=-1$ has already been established, then 
$$\sum_{m=0}^{n-1}\alpha^m\rho_m=\frac{\alpha^n-2\alpha+1}{1-\alpha}.
$$
If the right-hand side is strictly positive, then we have $\rho_n=-1$. Positivity is obvious for $\alpha\in[-1,0]$ and for $\alpha=1/2$. For $\alpha\in(0,1/2)$, we can take the derivative of the numerator with respect to $\alpha$. This derivative is equal to $n\alpha^{n-1}-2$, which is strictly negative for $\alpha\in(0,1/2)$, because $n\alpha^{n-1}\le1$ for those $\alpha$. Since the numerator is strictly positive for $\alpha=1/2$, the result follows.
\end{proof}

\subsection{Global maxima for $\alpha\in(1/2,1]$}\label{(1/2,1] section}

This is the most interesting regime, as can already be seen from Figure~\ref{hist fig}.  Kahane~\cite{Kahane} showed that the maximum value of the classical Takagi function $f_1$ is $2/3$ and that the set of maximizers is equal to the set of all points in $[0,1]$ whose binary expansion satisfies $\eps_{2n}+\eps_{2n+1}=1$ for each $n\in\bN_0$. This is a perfect set of Hausdorff dimension $1/2$. For other values of $\alpha\in(1/2,1]$, we are only aware of the following result by 
Tabor and Tabor~\cite{TaborTabor}. They found the maximum value of $f_{\alpha_n}$, where $\alpha_n$ is the unique positive root of the Littlewood polynomial $1-x-x^2-\cdots -x^n$. This sequence satisfies $\alpha_1=1$ and $\alpha_n\da1/2$ as $n\ua\infty$. The maximum value of $f_{\alpha_n}$ is then given by $C(\alpha_n)$, where
\begin{equation}\label{TaborTabos max}
C(\alpha):=\frac{1}{2-2(2 \alpha -1)^{\frac{\log_2 \alpha-1}{\log_2 \alpha }}}.
\end{equation}
Tabor and Tabor~\cite{TaborTabor} observed numerically that  the maximum value of $f_\alpha$ typically \emph{differs} from $C(\alpha)$  for other values of $\alpha\in(1/2,1)$. In Example~\ref{TaborTabor counterex} we will investigate a specific choice of $\alpha$ for which $C(\alpha)$ is indeed different from the maximum value of $f_\alpha$. In Example~\ref{TaborTaborEx}, we will characterize the set of maximizers of $f_{\alpha_n}$, where  $\alpha_n$ is as above. 

We have seen in Sections~\ref{alpha<1 results section}
 and~\ref{alpha in[-1,1/2] Section} that for $\alpha\le1/2$ the function $f_\alpha$ has either two, or four maximizers in $[0,1]$. For $\alpha>1/2$ this situation changes. The following result shows that then $f_\alpha$ will have either two or uncountably many maximizers. Moreover, the result quoted in Section~\ref{alpha>1 results section}
 will imply that the latter case can only happen for $\alpha\in(1/2,1]$.
 
 \begin{theorem}\label{Cantor thm} For $\alpha>1/2$, we have the following dichotomy.
 \begin{enumerate}
 \item If $\sum_{m=0}^n\alpha^m\rho^\sharp_m\neq0$ for all $n$, then the function $f_\alpha$ has exactly  two maximizers in $[0,1]$. They are given by $\tau^\sharp(\alpha)$  and $1-\tau^\sharp(\alpha)$ and have $\bm\rho^\sharp$ and $-\bm\rho^\sharp$ as their Rademacher expansions. 
 \item Otherwise, let $n_0$ be the smallest $n$ such that $\sum_{m=0}^n\alpha^m\rho^\sharp_m=0$. Then the set of maximizers of $f_\alpha$ consists of all those $t\in[0,1]$ that have a Rademacher expansion consisting of successive blocks of the form $\rho^\sharp_0,\dots, \rho^\sharp_{n_0}$ or $(-\rho^\sharp_0),\dots,(- \rho^\sharp_{n_0})$. This is a perfect set of Hausdorff dimension $1/(n_0+1)$ and its $1/(n_0+1)$-Hausdorff measure is finite and strictly positive.
 \end{enumerate}
 \end{theorem}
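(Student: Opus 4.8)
The plan is to derive both parts from a single combinatorial statement: a complete description of the $\{-1,+1\}$-valued sequences that satisfy the step condition for $\alpha$. Write $\bm\rho^\sharp=\bm\rho^\sharp(\alpha)$ and, for a $\{-1,+1\}$-valued sequence $\bm\sigma$, put $S_n(\bm\sigma):=\sum_{m=0}^n\alpha^m\sigma_m$ (with $S_{-1}:=0$), so the step condition for $\alpha$ reads $\sigma_nS_{n-1}(\bm\sigma)\le 0$ for all $n\in\bN$; both $\bm\rho^\sharp$ and $\bm\rho^\flat(\alpha)$ satisfy it by their very construction \eqref{rho+- eq bis}. Part~(a) is then quick: its hypothesis is exactly that $\cZ=\varnothing$ in the notation of Proposition~\ref{Takagi class number of max prop} applied with $c_m=\alpha^m2^{-m}$, so there is only one step-condition sequence starting with $+1$; hence $\bm\rho^\flat(\alpha)=\bm\rho^\sharp$ and, by Corollary~\ref{smallest largest cor}, $\tau^\sharp(\alpha)$ is the unique maximizer of $f_\alpha$ in $[0,1/2]$, with $\bm\rho^\sharp$ as its only Rademacher expansion (it is neither $0$, because $\rho^\sharp_1=-1$, nor a dyadic rational in $(0,1/2)$, because such a point would have two step-condition expansions). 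By the reflection symmetry $f_\alpha(1-t)=f_\alpha(t)$, valid since $\phi$ is even and $1$-periodic, the unique maximizer in $[1/2,1]$ is $1-\tau^\sharp(\alpha)$, with expansion $-\bm\rho^\sharp$. These two maximizers are distinct, because $\tau^\sharp(\alpha)\neq 1/2$: otherwise $\rho^\sharp_n=-1$ for all $n\ge1$, which by \eqref{rho+- eq bis} would force $S_{n-1}(\bm\rho^\sharp)=1-\alpha-\cdots-\alpha^{n-1}>0$ for every $n$, impossible for $\alpha>1/2$. This settles part~(a).

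For part~(b), set $L:=n_0+1$; since $S_0(\bm\rho^\sharp)=1\neq0$ we have $n_0\ge1$, hence $L\ge2$. Let $B^+:=(\rho^\sharp_0,\dots,\rho^\sharp_{n_0})$, let $B^-:=-B^+$, and let $\Sigma$ be the set of all $\{-1,+1\}$-valued sequences obtained by freely concatenating blocks from $\{B^+,B^-\}$; this $\Sigma$ is closed, hence compact, in $\{-1,+1\}^{\bN_0}$. The key lemma I would prove is that \emph{a $\{-1,+1\}$-valued sequence satisfies the step condition for $\alpha$ if and only if it belongs to $\Sigma$}. To prove sufficiency, one uses that $\bm\rho^\sharp$ satisfies the step condition, that $S_{n_0}(\bm\rho^\sharp)=0$, and that $S_m(\bm\rho^\sharp)\neq0$ for $0\le m<n_0$: if $\bm\rho\in\Sigma$ has $j$-th block $\eta_jB^+$ with $\eta_j\in\{-1,+1\}$, then $\sum_{k=0}^{n_0}\alpha^k\rho_{jL+k}=\eta_j\,S_{n_0}(\bm\rho^\sharp)=0$, so the partial sums telescope and $S_{jL-1}(\bm\rho)=0$ for every $j\ge1$; writing $n=jL+r$ with $0\le r\le L-1$ then gives $\rho_nS_{n-1}(\bm\rho)=\alpha^{jL}\,\rho^\sharp_rS_{r-1}(\bm\rho^\sharp)\le 0$ since $\alpha>0$. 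For necessity, one shows by induction on $j$ that the $j$-th block of any step-condition sequence $\bm\rho$ lies in $\{B^+,B^-\}$ and that $S_{(j+1)L-1}(\bm\rho)=0$: using the inductive identity $S_{jL-1}(\bm\rho)=0$ and the invariance of the step condition under $\bm\rho\mapsto-\bm\rho$, the step conditions at the indices of the $j$-th block amount to $\sigma_rS_{r-1}(\bm\sigma)\le 0$ for $1\le r\le n_0$, where $\bm\sigma:=(\rho_{jL},\rho_{jL+1},\dots)$; a short second induction on $r$ then forces $\sigma_r=\sigma_0\,\rho^\sharp_r$ for $0\le r\le n_0$, because $S_{r-1}(\bm\sigma)=\sigma_0\,S_{r-1}(\bm\rho^\sharp)\neq0$ and $\rho^\sharp_rS_{r-1}(\bm\rho^\sharp)<0$ leave exactly one admissible sign for $\sigma_r$; consequently the $j$-th block equals $\sigma_0B^+$ and $S_{(j+1)L-1}(\bm\rho)=S_{jL-1}(\bm\rho)+\sigma_0\,\alpha^{jL}S_{n_0}(\bm\rho^\sharp)=0$. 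Combining the lemma with Theorem~\ref{general Takagi step cond thm} shows that the set of maximizers of $f_\alpha$ is exactly $E:=\cT(\Sigma)$, which is the set described in part~(b).

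It remains to analyze $E$. A direct computation from \eqref{Psi eq} gives $\cT(B^\pm\cdot\bm\sigma)=v^\pm+2^{-L}\cT(\bm\sigma)$ for every $\{-1,+1\}$-valued sequence $\bm\sigma$, where $v^\pm:=\sum_{i=0}^{n_0}2^{-(i+2)}(1\mp\rho^\sharp_i)$; one checks $v^++v^-=1-2^{-L}$ and $v^--v^+=\sum_{i=0}^{n_0}2^{-(i+1)}\rho^\sharp_i\ge 2^{-1}-\sum_{i=1}^{n_0}2^{-(i+1)}=2^{-L}>0$. Hence $E=\psi^+(E)\cup\psi^-(E)$ for the similarities $\psi^\pm(x):=v^\pm+2^{-L}x$ of ratio $2^{-L}$, so $E$ is their attractor and $E\subseteq I:=[\,v^+/(1-2^{-L}),\,v^-/(1-2^{-L})\,]$. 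Each of the intervals $\psi^+(I)$ and $\psi^-(I)$ has length $2^{-L}|I|$, and $\psi^-(I)=\psi^+(I)+(v^--v^+)$; since $2^{-L}|I|=2^{-L}(v^--v^+)/(1-2^{-L})<v^--v^+$ (because $2^{-L}/(1-2^{-L})<1$ for $L\ge2$), these images are disjoint, so the strong separation condition holds. By the standard theory of self-similar sets, $\dim_H E$ equals the solution $s$ of $2\,(2^{-L})^s=1$, that is $s=1/L=1/(n_0+1)$, and $0<\cH^{1/(n_0+1)}(E)<\infty$. Finally, the coding map $\{B^+,B^-\}^{\bN_0}\to E$, $(C_0,C_1,\dots)\mapsto\cT(C_0C_1C_2\cdots)$, is a continuous bijection from a compact space to a Hausdorff space: surjectivity is the definition of $E$, and injectivity holds because distinct block-sequences concatenate to distinct $\{-1,+1\}$-sequences (the block length being fixed) and two distinct elements of $\Sigma$ give distinct reals under $\cT$, since a concatenation of blocks from $\{B^+,B^-\}$ is never eventually constant (as $\rho^\sharp_0=+1\neq-1=\rho^\sharp_1$ and $L\ge2$) and hence is never a non-standard Rademacher expansion of a dyadic rational. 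A continuous bijection from a compact space to a Hausdorff space is a homeomorphism, so $E$ is homeomorphic to the Cantor set, in particular perfect (and uncountable). This completes part~(b).

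I expect the crux to be the necessity half of the key lemma: one has to see that, once the running sum $S_n(\bm\rho)$ vanishes for the first time --- namely at $n=n_0$ --- the step condition decouples and effectively restarts, and to package this into the clean two-level induction (over blocks, and over positions within a block) resting on the telescoping identity $S_{jL-1}(\bm\rho)=0$. Once the lemma is available, the geometric part of (b) is a routine application of the (strong) separation condition for iterated function systems, and (a) follows at once from Proposition~\ref{Takagi class number of max prop} together with the reflection symmetry of $f_\alpha$.
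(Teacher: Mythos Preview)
Your proof is correct and follows essentially the same route as the paper: part~(a) via Proposition~\ref{Takagi class number of max prop} with $|\cZ|=0$ plus the exclusion of $t=1/2$, and part~(b) via the block characterization of step-condition sequences (which the paper merely declares ``easy to see'') combined with Moran-type self-similarity for the Hausdorff statement. Your write-up is in fact more complete than the paper's: you supply the two-level induction for the key lemma, you give a self-contained argument that $\tau^\sharp(\alpha)\neq 1/2$ (avoiding the paper's detour through Lemma~\ref{dense set lemma 1} and Lemma~\ref{t=1/2 lemma}), and you make the IFS structure and strong separation explicit rather than quoting the auxiliary Lemma~\ref{Hausdorff lemma}. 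The only cosmetic point is that your disjointness check for $\psi^+(I)$ and $\psi^-(I)$ uses $v^--v^+\ge 2^{-L}$, where equality can occur (e.g.\ when $\rho^\sharp_1=\cdots=\rho^\sharp_{n_0}=-1$); your subsequent inequality $2^{-L}/(1-2^{-L})<1$ for $L\ge 2$ still gives strict disjointness, so nothing is lost.
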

 
  The preceding theorem yields the following corollary.

 \begin{corollary}\label{dyadic cor} For $\alpha>1/2$, the function $f_\alpha$ cannot have a maximizer that is a dyadic rational number. 
 \end{corollary}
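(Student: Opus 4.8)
The plan is to derive Corollary~\ref{dyadic cor} directly from Theorem~\ref{Cantor thm}. Recall that a dyadic rational $t\in(0,1)$ has exactly two Rademacher expansions: the \emph{standard} one ending in $+1,+1,+1,\dots$ and the other one ending in $-1,-1,-1,\dots$; the endpoints $t=0$ and $t=1$ have the constant expansions $(-1,-1,\dots)$ and $(+1,+1,\dots)$ respectively, but since $f_\alpha(0)=f_\alpha(1)=0$ these are never maximizers for $\alpha>1/2$ (indeed $f_\alpha(1/3)>0$ already shows $\max f_\alpha>0$). So it suffices to rule out a dyadic maximizer $t\in(0,1)$.

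First I would handle case (a) of Theorem~\ref{Cantor thm}. There the only two maximizers have Rademacher expansions $\bm\rho^\sharp$ and $-\bm\rho^\sharp$. I would argue that neither sequence is eventually constant: by construction \eqref{rho+- eq} applied with $c_m=\alpha^m2^{-m}$, if $\rho^\sharp_m=-1$ for all $m\ge N$ (say), then the partial sums $S_n:=\sum_{m=0}^{n-1}\alpha^m\rho^\sharp_m$ would eventually have to be strictly positive at every step $n>N$; but a short estimate on the tail geometric-type sum $\sum_{m\ge N}\alpha^m$ (using $\alpha>1/2$, so the partial sums of $\alpha^m\rho^\sharp_m$ cannot all stay positive while the terms being subtracted have fixed sign $-1$) forces a contradiction — in fact case (a) is precisely the case $S_n\neq 0$ for all $n$, and one checks $S_n$ must change sign infinitely often, hence $\rho^\sharp$ flips infinitely often. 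Similarly $\rho^\sharp$ is not eventually $+1$. Hence $\tau^\sharp(\alpha)$ and $1-\tau^\sharp(\alpha)$ are not dyadic rationals, and case (a) is done.

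For case (b), the set of maximizers consists of all $t$ whose Rademacher expansion is a concatenation of the two blocks $B^+:=(\rho^\sharp_0,\dots,\rho^\sharp_{n_0})$ and $B^-:=(-\rho^\sharp_0,\dots,-\rho^\sharp_{n_0})$. A dyadic rational in $(0,1)$ would need one of its two Rademacher expansions to be eventually constant, i.e.\ eventually all $+1$ or all $-1$. But an eventually-all-$+1$ concatenation of the blocks $B^+,B^-$ would require that from some point on every block is $B^+$ and moreover that $B^+$ itself, read past that point, is all $+1$'s; since $\rho^\sharp_0=+1$ this only fails if some later entry of $B^+$ is $-1$. Here the key point is that $B^+$ genuinely contains a $-1$: indeed $\rho^\sharp_1=-1$ because $S_1=c_0\rho^\sharp_0=1>0$ forces $\rho^\sharp_1=-1$ in \eqref{rho+- eq}, and $n_0\ge1$, so $B^+$ has length $\ge 2$ and its second entry is $-1$. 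Thus no concatenation of $B^+,B^-$ can be eventually constant, so no maximizer is a dyadic rational.

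The main obstacle — really the only non-bookkeeping point — is the claim in case (a) that $\bm\rho^\sharp$ is not eventually constant. The cleanest route is: case (a) holds iff $S_n\neq 0$ for every $n$; if $\bm\rho^\sharp$ were eventually $-1$, then for large $n$ we would have $S_{n+1}=S_n-\alpha^n\le S_n$ at every step, so $(S_n)$ is eventually nonincreasing; but by \eqref{rho+- eq} $\rho^\sharp_{n+1}=-1$ is chosen precisely when $S_{n+1}>0$, so eventual-$(-1)$ forces $S_n>0$ for all large $n$ while $S_n$ strictly decreases by $\alpha^n$ each step — and since $\sum_n \alpha^n=\infty$ for $\alpha>1/2$ wait, this needs $\alpha\ge 1$; for $\alpha\in(1/2,1)$ instead one uses that once $\rho^\sharp$ has stabilized to $-1$, $S_n\to S_\infty$ with $S_\infty = S_N - \sum_{m\ge N}\alpha^m$, and one shows this limit is $\le 0$, so some $S_n\le 0$, contradicting the eventual-$(-1)$ rule (which needs $S_n>0$). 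I would spell out this short limiting argument carefully, and symmetrically rule out eventual-$(+1)$; everything else is immediate from Theorem~\ref{Cantor thm}.
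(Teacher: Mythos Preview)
Your approach is correct and will work, but it differs from the paper's proof in a notable way. The paper avoids your case split entirely: if a dyadic $t\in(0,1/2)$ were a maximizer, then by Theorem~\ref{general Takagi step cond thm}\,(b) \emph{both} of its Rademacher expansions satisfy the step condition; since these are two distinct sequences in $\cR_+$, Lemma~\ref{sign lemma} (equivalently Proposition~\ref{Takagi class number of max prop}) forces a zero of the partial sums, so one is automatically in case (b) of Theorem~\ref{Cantor thm}. The paper then observes that one of the two expansions is eventually all $+1$, which forces the block $B^+$ itself to be the constant block $(+1,\dots,+1)$, whence $t=0$ would be a maximizer---absurd. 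This sidesteps your entire case (a) analysis.

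Your case (b) argument (that $\rho^\sharp_1=-1$, so $B^+$ is not constant, so no concatenation is eventually constant) is a perfectly good alternative to the paper's ``hence $t=0$ is a maximizer'' punchline---arguably cleaner. Your case (a) argument is also salvageable, but the step you flag as ``one shows this limit is $\le0$'' is not free: you need to take $N$ minimal with $\rho^\sharp_m=-1$ for $m\ge N$, so that $\rho^\sharp_{N-1}=+1$ and hence $S_{N-1}\le0$, giving $S_N\le\alpha^{N-1}$; only then does
\[
S_\infty \;=\; S_N-\frac{\alpha^{N}}{1-\alpha}\;\le\;\alpha^{N-1}\cdot\frac{1-2\alpha}{1-\alpha}\;<\;0
\]
follow for $\alpha\in(1/2,1)$. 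You should spell this bound out; without it the limiting argument is incomplete. (For $\alpha\ge1$ the divergence $\sum\alpha^m=\infty$ already suffices, as you noted.) So both routes work; the paper's is shorter because it lets Theorem~\ref{general Takagi step cond thm}\,(b) do the work of eliminating case (a) for you.
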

 
Note that Theorem~\ref{alpha negative thm} implies that also for $\alpha<-1$ there are no dyadic rational maximizers. However, by Proposition~\ref{alpha in -1 1/2 prop}, the unique maximizer in case $-1\le \alpha\le1/2$ is $t=1/2$. 

Our next result shows in particular that there is no nonempty open interval in $(1/2,1]$ on which $\tau^\flat$ or $\tau^\sharp$ are constant. 
 
\begin{theorem}\label{nowhere flat thm}There is no nonempty open interval in $(1/2,1)$ on which the functions $\tau^\flat$ or $\tau^\sharp$ are continuous. \end{theorem}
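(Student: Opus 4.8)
The plan is to prove the contrapositive: if $\tau^\flat$ (or $\tau^\sharp$) were continuous on some nonempty open interval $I\subset(1/2,1)$, we derive a contradiction. By Proposition~\ref{continuity prop}, continuity of $\tau^\flat$ and $\tau^\sharp$ at a point $\alpha$ is equivalent to $f_\alpha$ having a unique maximizer in $[0,1/2]$, which is equivalent to $\tau^\flat(\alpha)=\tau^\sharp(\alpha)$, which is equivalent to $\sum_{m=0}^n\alpha^m\rho_m^\sharp(\alpha)\neq0$ for all $n\in\bN$. So the hypothesis would give us that on all of $I$ we have $\tau^\flat=\tau^\sharp=:\tau$ and $\tau$ is continuous there. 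The first step is to observe that the functions $\tau^\flat$ and $\tau^\sharp$ are monotone. Indeed, from the recursion \eqref{rho+- eq bis} one checks that for $\alpha\in(1/2,1]$ the partial sums $S_n(\alpha):=\sum_{m=0}^{n-1}\alpha^m\rho_m^\sharp(\alpha)$ and the digits $\rho_n^\sharp(\alpha)$ behave monotonically in $\alpha$ in the appropriate sense; the cleanest route is to show directly that $\alpha\mapsto\tau^\sharp(\alpha)$ is nondecreasing (or nonincreasing) on $(1/2,1]$, so that a continuous monotone function on an interval takes a whole subinterval of values.

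The heart of the argument is then a \emph{density of step roots} statement. I would argue that the set $D$ of $\alpha\in(1/2,1)$ for which there exists $n$ with $\sum_{m=0}^n\alpha^m\rho_m^\sharp(\alpha)=0$ — equivalently, the $\alpha$ at which $\tau^\flat$ and $\tau^\sharp$ are discontinuous — is dense in $(1/2,1)$. This is exactly the statement promised in the introduction ("the step roots lie dense in $(1/2,1]$"), so presumably it is available as an earlier lemma or can be extracted from the machinery around Theorem~\ref{Cantor thm}. Granting density of $D$, no nonempty open subinterval $I$ can avoid $D$, so continuity fails somewhere in $I$, contradicting our assumption. To make this self-contained I would sketch the density proof: given any target subinterval, one builds by hand a finite $\{-1,+1\}$ string $\rho_0=1,\rho_1,\dots,\rho_n$ and shows that the polynomial equation $\sum_{m=0}^n x^m\rho_m=0$ has a root $\alpha$ in that subinterval which moreover is realized as the step root, i.e.\ the $\rho_m$ agree with $\rho_m^\sharp(\alpha)$; the freedom in choosing the string length $n$ and the signs gives roots accumulating everywhere in $(1/2,1)$ (this is the Littlewood-polynomial side of the correspondence, and it is where the interval $(1/2,2)$ rather than $(1/2,1)$ would enter — but since we only need density, a crude construction suffices).

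The main obstacle I anticipate is precisely verifying that a hand-constructed sign string $(\rho_m)$ actually equals the canonical string $(\rho_m^\sharp(\alpha))$ produced by the recursion \eqref{rho+- eq bis} at the root $\alpha$ of $\sum x^m\rho_m=0$. The recursion is self-referential — the digit $\rho_n^\sharp(\alpha)$ depends on the sign of a sum built from the earlier $\rho_m^\sharp(\alpha)$ — so one must control the signs of all partial sums $S_k(\alpha)$, $1\le k\le n$, simultaneously, not just the vanishing of the final one. A convenient device is to choose the string so that the partial sums strictly alternate in a controlled way (each new term dominating, as in the estimate pattern used in Example~\ref{squaredex}), which forces the step condition and pins down the canonical digits; one then perturbs the construction (lengthening the block, flipping a late digit) to move the root continuously across any prescribed subinterval of $(1/2,1)$, using the intermediate value theorem on the coefficient-to-root map. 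Once density of $D$ in $(1/2,1)$ is in hand, the theorem follows immediately from Proposition~\ref{continuity prop}: on any nonempty open $I\subset(1/2,1)$ there is a point of $D$, and there $\tau^\flat$ and $\tau^\sharp$ are discontinuous.
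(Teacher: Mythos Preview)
Your approach has a genuine gap. The density of step roots in $(1/2,1)$ that you hope is ``available as an earlier lemma'' is in fact Corollary~\ref{step root location cor}(c), which the paper derives \emph{from} Theorem~\ref{nowhere flat thm}, not the other way round; relying on it here is circular. Your self-contained sketch of density---build a sign string, find a root of the associated Littlewood polynomial in the target subinterval, and verify that it is a step root---is only a plan: you correctly identify the main obstacle (controlling all partial-sum signs simultaneously so that your hand-built string really equals $\bm\rho^\sharp(\alpha)$ at the root), but you do not overcome it. The monotonicity claim for $\tau^\sharp$ is also unsupported (and inspection of Figure~\ref{hist fig} suggests it is false), and in any case it plays no role in the argument you ultimately outline.

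The paper's route is quite different and avoids constructing step roots altogether. First, one shows that for every $\alpha\in(1/2,1)$ the partial sums $R_n(\alpha)=\sum_{m=0}^n\rho_m^\sharp(\alpha)\,\alpha^m$ tend to zero as $n\to\infty$; this uses only the step condition and an elementary sign-oscillation estimate. Consequently, if $\bm\rho^\sharp$ were constant on some open interval $I\subset(1/2,1)$, then the analytic function $R(z)=\sum_{m\ge0}\rho_m^\sharp z^m$ would vanish on $I$, hence identically on $(-1,1)$ by the identity theorem, forcing all $\rho_m^\sharp=0$---absurd. So $\bm\rho^\sharp$ is nowhere locally constant. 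Now suppose $\tau^\sharp$ were continuous on a nonempty open $I\subset(1/2,1)$. Pick $\alpha,\beta\in I$ with $\bm\rho^\sharp(\alpha)\neq\bm\rho^\sharp(\beta)$; by Corollary~\ref{dyadic cor} neither $\tau^\sharp(\alpha)$ nor $\tau^\sharp(\beta)$ is a dyadic rational, and since non-dyadic numbers have unique Rademacher expansions, $\tau^\sharp(\alpha)\neq\tau^\sharp(\beta)$. The intermediate value theorem would then force $\tau^\sharp$ to take every value between them, including dyadic rationals---contradicting Corollary~\ref{dyadic cor} once more. The ideas you are missing are thus the analytic-continuation step (which replaces your hoped-for explicit density construction) and the combination of Corollary~\ref{dyadic cor} with the intermediate value theorem.
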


\begin{example}\label{TaborTaborEx} Tabor and Tabor~\cite{TaborTabor}  found the maximum value of $f_{\alpha_n}$, where $\alpha_n$ is the unique positive root of the Littlewood polynomial $1-x-x^2-\cdots -x^n$. The case $n=1$, and in turn $\alpha_1=1$, corresponds to the classical Takagi function  as studied by Kahane~\cite{Kahane}. Here, we will now determine the corresponding sets of maximizers. It is clear that we must have $1-\sum_{k=1}^m\alpha_n^k>0$ for $m=1,\dots, n-1$. Hence, 
\begin{align*}
\bm\rho^\sharp(\alpha_n)&=(+1,\underbrace{-1,\dots,-1}_{\text{$n$ times}},+1,\underbrace{-1,\dots,-1}_{\text{$n$ times}},\dots),\\
\bm\rho^\flat(\alpha_n)&=(+1,\underbrace{-1,\dots,-1}_{\text{$n$ times}},-1,\underbrace{+1,\dots,+1}_{\text{$n$ times}},-1,\underbrace{+1,\dots,+1}_{\text{$n$ times}},\dots).
\end{align*}
Every maximizer in $[0,1]$ has a Rademacher expansion that is made up of successive blocks of  length $n+1$ taking the form $+1,-1,\dots,-1$ or $-1,+1\dots, +1$. This is a perfect set of Hausdorff dimension $1/(n+1)$. The smallest maximizer is given by
\begin{align*}
\tau^\sharp(\alpha_n)&=\sum_{m=0}^\infty\sum_{k=2}^{n+1}2^{-(m(n+1)+k)}=\Big(\frac12-2^{-(n+1)}\Big)\sum_{m=0}^\infty2^{-m(n+1)}=\frac{2^n-1}{2^{n+1}-1}.
\end{align*}
The largest maximizer in $[0,1/2]$ is
$$\tau^\flat(\alpha_n)=\frac12-2^{-(n+1)}+\sum_{m=1}^\infty 2^{-m(n+1)-1}=\frac{1}{2} \Big(1-2^{-n}+\frac{1}{2^{n+1}-1}\Big).
$$\end{example}

\begin{example}\label{TaborTabor counterex}Consider the choice
$$\alpha=\frac14\Big(1+\sqrt{13}-\sqrt{2\big(\sqrt{13}-1\big)}\Big)\approx 0.580692.$$
One checks that $1-\alpha-\alpha^2-\alpha^3+\alpha^4=0$ and that $1-\alpha-\alpha^2-\alpha^3<0$ and $1-\alpha-\alpha^2>0$ and $1-\alpha>0$. Therefore, 
\begin{align*}
\bm\rho^\sharp(\alpha)&=(+1,-1,-1,-1,+1,+1,-1,-1,-1,+1,+1,-1,-1,-1,+1,\dots)\\
\bm\rho^\flat(\alpha)&=(+1,-1,-1,-1,+1,-1,+1,+1,+1,-1,-1,+1,+1,+1,-1,\dots),
\end{align*}
and every maximizer in $[0,1]$ has a Rademacher expansion that consists  of successive blocks of the form $+1,-1,-1,-1,+1$ or $-1,+1,+1,+1,-1$. This is a Cantor-type set of Hausdorff dimension $1/5$. Furthermore,
\begin{align*}
\tau^\sharp(\alpha)&=\sum_{n=0}^\infty\big(0\cdot 2^{-(5n+1)}+2^{-(5n+2)}+ 2^{-(5n+3)}
+ 2^{-(5n+4)}+0\cdot 2^{-(5n+5)}\big)=\frac{14}{31}\approx 0.451613
\end{align*}
is the smallest maximizer, and
$$\tau^\flat(\alpha)=\frac7{16}+\sum_{n=1}^\infty \big(2^{-5n-1}+2^{-5n-5}\big)=\frac{451}{992}\approx 0.454637
$$
is the largest maximizer in $[0,1/2]$. To compute the maximum value, we can either use Lemma~\ref{Rademacher f lemma}, or we directly compute $f_\alpha(14/31)$ as follows. We note that $\phi(2^{5n+k}14/31)=b_k/31$, where $b_0=14$, $b_1=3$, $b_2=6$, $b_3=12$, and $b_4=7$. Thus,
\begin{align*}
f_\alpha(14/31)&=\frac1{31}\sum_{n=0}^\infty\Big(\frac\alpha2\Big)^{5n}\Big(14+3\frac\alpha2+6\Big(\frac\alpha2\Big)^2+12\Big(\frac\alpha2\Big)^3+7\Big(\frac\alpha2\Big)^4\Big)\\
&=\frac{39+3 \sqrt{13}-\sqrt{6 \left(25+7 \sqrt{13}\right)}}{56-2\sqrt{13}+4 \sqrt{7+2 \sqrt{13}}}\approx 0.508155,\end{align*}
where the second identity was obtained by using {\tt Mathematica 12.0}.
For the function in \eqref{TaborTabos max}, we get, however, $C(\alpha)\approx 0.508008$, which confirms the numerical observation from~\cite{TaborTabor} that $C(\cdot)$ may not yield correct maximum values if evaluated at arguments different from the positive roots of $1-x-x^2-\cdots -x^n$.
\end{example}

\subsection{Global maxima for $\alpha\in(1,2)$}\label{alpha>1 results section}

For $\alpha=\sqrt2$, it can be deduced from~\cite[Lemma 5]{Galkina} that $f_{\sqrt2}$ has  maxima  at $t=1/3$ and $t=2/3$ and maximum value $\frac13(2+\sqrt2)$. That lemma was later rediscovered by the second author in~\cite[Lemma 3.1]{SchiedTakagi}. The statement on the maxima of $f_{\sqrt2}$ was given independently in~\cite{GalkinGalkina} and~\cite{SchiedTakagi}. 
Mishura and Schied~\cite{MishuraSchied2} extended this subsequently to the following result, which we quote here for the sake of completeness. It is not difficult to prove it with our present method; see~\cite[Example 4.3.1]{HanMMathThesis}.

\begin{theorem}[\bfseries{Mishura and Schied~\cite{MishuraSchied2}}]\label{alpha in (1,2) thm} For $\alpha\in(1,2)$, the function $f_\alpha$ has exactly  two maximizers at $t=1/3$ and $t=2/3$ and its maximum value  is $(3(1-\alpha/2))^{-1}$.
\end{theorem}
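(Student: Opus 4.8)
The plan is to apply Theorem~\ref{Cantor thm} and Corollary~\ref{smallest largest cor} to the case $\alpha\in(1,2)$ by identifying the recursively defined sequences $\bm\rho^\flat(\alpha)$ and $\bm\rho^\sharp(\alpha)$ explicitly. First I would compute the partial sums $S_n:=\sum_{m=0}^{n-1}\alpha^m\rho_m$. Starting from $\rho_0=+1$ in both cases, one has $S_1=1>0$, so $\rho_1=-1$; then $S_2=1-\alpha<0$ since $\alpha>1$, so $\rho_2=+1$; then $S_3=1-\alpha+\alpha^2$, and I claim this is strictly positive for all $\alpha\in(1,2)$. Indeed, $1-\alpha+\alpha^2=(\alpha-\tfrac12)^2+\tfrac34>0$ for all real $\alpha$, so $\rho_3=-1$, and so on. The natural claim to prove by induction is that $\bm\rho^\flat(\alpha)=\bm\rho^\sharp(\alpha)=(+1,-1,+1,-1,\dots)$, i.e.\ $\rho_n=(-1)^n$, with the associated partial sums $S_n=\sum_{m=0}^{n-1}(-\alpha)^m=\frac{1-(-\alpha)^n}{1+\alpha}$ never vanishing. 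Since $\alpha>1$, $(-\alpha)^n\neq 1$ for all $n\ge1$, so $S_n\neq0$; and the sign of $S_n$ is that of $1-(-\alpha)^n$, which is positive for $n$ odd and negative for $n$ even — exactly the sign pattern that forces $\rho_n=(-1)^n$ from both recursions in \eqref{rho+- eq bis}. This simultaneously establishes that the two sequences coincide and that the "non-degenerate" case (a) of Theorem~\ref{Cantor thm} applies.

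Next I would compute $\tau^\sharp(\alpha)=\cT(\bm\rho^\sharp(\alpha))$. With $\rho_n=(-1)^n$ we have $\eps_n=\tfrac12(1-\rho_n)$ equal to $0$ for $n$ even and $1$ for $n$ odd, so by \eqref{Psi eq},
\begin{equation*}
\tau^\sharp(\alpha)=\sum_{n\text{ odd}}2^{-(n+2)}=\sum_{j=0}^\infty 2^{-(2j+3)}=\frac{1}{8}\cdot\frac{1}{1-1/4}=\frac16.
\end{equation*}
Hmm — that gives $1/6$, not $1/3$. The resolution is that $\cT(\bm\rho^\sharp(\alpha))$ is the smallest maximizer in $[0,1/2]$, and $1/6$ is its value; but the problem is that I must double-check the indexing convention in \eqref{Psi eq}, since Corollary~\ref{smallest largest cor} and Theorem~\ref{Cantor thm}(a) together say the two maximizers in $[0,1]$ are $\tau^\sharp(\alpha)$ and $1-\tau^\sharp(\alpha)$. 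If the correct value is $\tau^\sharp(\alpha)=1/3$ — which happens precisely when $\eps_n=1$ for $n$ even and $0$ for $n$ odd, giving $\sum_{j\ge0}2^{-(2j+2)}=\tfrac14\cdot\tfrac{4}{3}=\tfrac13$ — then the sign pattern is $\rho_n=(-1)^{n+1}$, i.e.\ $\rho_0=-1$. But the recursions fix $\rho_0=+1$. The clean way around this is to observe that $1/3$ has standard Rademacher expansion $(-1,+1,-1,+1,\dots)$ and $2/3$ has $(+1,-1,+1,-1,\dots)=\bm\rho^\sharp(\alpha)$; so $\tau^\sharp(\alpha)=\cT(\bm\rho^\sharp(\alpha))$ should be checked to equal $2/3$, not a value in $[0,1/2]$ — meaning I should recheck whether $1/3\le 1/2$, hence the smallest maximizer in $[0,1/2]$ is $1/3$, with Rademacher expansion forced to have $\rho_0=+1$ only if $1/3$'s nonstandard expansion is used. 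Since $1/3$ is not dyadic, its Rademacher expansion is unique and starts with $-1$; therefore $1/3$ is \emph{not} of the form $\cT(\bm\rho)$ with $\rho_0=+1$. This forces me to recheck the claim $\rho_1=-1$: with $\rho_0=+1$, $S_1=1>0$ gives $\rho_1=-1$, so $\cT(\bm\rho^\sharp(\alpha))\ge 2^{-3}>0$, consistent with $\tau^\sharp(\alpha)=1/6$. So in fact \textbf{the two maximizers are $1/6$ and $5/6$}?? That contradicts the quoted theorem.

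The resolution — and the step I expect to be the real obstacle — is that the indexing in \eqref{Psi eq} and the Rademacher functions must be pinned down carefully: $r_n(t)=(-1)^{\lfloor 2^{n+1}t\rfloor}$ shows that $\rho_0$ encodes the \emph{first} binary digit $\eps_0=\tfrac12(1-\rho_0)$ with weight $2^{-1}$, but \eqref{Psi eq} assigns $(1-\rho_n)$ the weight $2^{-(n+2)}$, i.e.\ $\eps_n$ has weight $2^{-(n+1)}$. Thus $\cT(\bm\rho)=\sum_n \eps_n 2^{-(n+1)}$, and for $\bm\rho^\sharp(\alpha)=(+1,-1,+1,-1,\dots)$ we get $\eps=(0,1,0,1,\dots)$ and $\cT=\sum_{j\ge0}2^{-(2j+2)}\cdot 0 + \sum_{j\ge0}2^{-(2j+3)}=1/6$; but with $\bm\rho$ having $\rho_0=+1$, the largest such $\cT$ is $1/2$ (all later $\rho_m=-1$) — so everything is in $[0,1/2]$ as Corollary~\ref{smallest largest cor} promises. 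The point I was missing: the two maximizers of $f_\alpha$ in $[0,1]$ are $\tau^\sharp(\alpha)$ and $1-\tau^\sharp(\alpha)$, and I must show $\tau^\sharp(\alpha)=1/3$, which forces the alternating pattern to be read off correctly — and a direct re-examination shows $S_2=1-\alpha$, but $2^0\alpha^0\rho_0+2^1\alpha^1\rho_1$ is \emph{not} what the step condition of Definition~\ref{exponential Takagi step cond def} uses; it uses $\sum_{m=0}^{n-1}\alpha^m\rho_m$ (no factor $2^m$, since $c_m=\alpha^m2^{-m}$ absorbs it). So my computation of $S_n$ stands, the pattern is $\rho_n=(-1)^n$, and $\tau^\sharp(\alpha)=1/6$, giving maximizers $1/6$ and $5/6$ — unless the tent map's self-similarity means $f_\alpha$ restricted to $[0,1/2]$ equals $\tfrac12\phi + \tfrac\alpha2 f_\alpha(2\cdot)$, whose maximizer at scale $1/2$ of a maximizer $t^*$ of $f_\alpha$ is $t^*/2$; iterating, the period-$1$ block $(+1,-1)$ would produce $t^*=2\cdot(1/6)=1/3$ as a \emph{fixed point}. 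I would therefore finish by verifying directly that $f_\alpha(1/3)=\tfrac13(1-\alpha/2)^{-1}\cdot\tfrac12$... rather than belabor this, the honest summary of the plan is: identify $\bm\rho^\sharp(\alpha)=\bm\rho^\flat(\alpha)$ as the eventual period-two alternating sequence after the forced initial terms, invoke Theorem~\ref{Cantor thm}(a) to get exactly two maximizers $\tau^\sharp(\alpha)$ and $1-\tau^\sharp(\alpha)$, compute $\tau^\sharp(\alpha)$ from \eqref{Psi eq} — carefully tracking the index shift so as to land on $1/3$ and $2/3$ — and finally evaluate $f_\alpha(1/3)$ using the geometric self-similarity $\phi(2^m/3)\in\{1/3,1/3,\dots\}$ (in fact $\phi(2^m\cdot\tfrac13)=\tfrac13$ for all $m$, since $2^m/3$ is always at distance $1/3$ from the nearest integer), so that $f_\alpha(1/3)=\tfrac13\sum_{m\ge0}(\alpha/2)^m=\tfrac13\cdot\tfrac{1}{1-\alpha/2}=(3(1-\alpha/2))^{-1}$. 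The main obstacle is precisely reconciling the off-by-one in the Rademacher indexing with the claimed maximizer locations; once that bookkeeping is fixed the rest is the routine induction and the one-line geometric-series evaluation of $f_\alpha(1/3)$.
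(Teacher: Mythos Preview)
Your overall strategy is exactly the one the paper indicates (it does not give a proof but says the result ``is not difficult to prove with our present method''): show by induction that $\bm\rho^\sharp(\alpha)=\bm\rho^\flat(\alpha)=((-1)^n)_{n\ge0}$ via the partial sums $\sum_{m=0}^{n-1}(-\alpha)^m=(1-(-\alpha)^n)/(1+\alpha)$, observe these never vanish for $\alpha>1$ so that $\cZ$ is empty, and conclude from Corollary~\ref{smallest largest cor} (or Theorem~\ref{Cantor thm}\,(a)) that the unique maximizer in $[0,1/2]$ is $\tau^\sharp(\alpha)$. That part is correct and complete.

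The entire confusion in the second half comes from a single bookkeeping slip in evaluating $\cT(\bm\rho)$. Formula~\eqref{Psi eq} reads $\cT(\bm\rho)=\sum_{n\ge0}2^{-(n+2)}(1-\rho_n)$, and for $\rho_n=-1$ one has $(1-\rho_n)=2$, so the contribution of each odd index $n$ is $2\cdot2^{-(n+2)}=2^{-(n+1)}$, not $2^{-(n+2)}$. Equivalently, you correctly rewrote $\cT(\bm\rho)=\sum_n\eps_n\,2^{-(n+1)}$ but then inserted the wrong exponent when summing: with $\eps=(0,1,0,1,\dots)$ the nonzero terms occur at $n=2j+1$ with weight $2^{-(2j+2)}$, giving
\[
\cT(\bm\rho^\sharp(\alpha))=\sum_{j\ge0}2^{-(2j+2)}=\frac{1}{4}\cdot\frac{1}{1-1/4}=\frac{1}{3},
\]
not $1/6$. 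With this correction the two maximizers are $1/3$ and $2/3$ on the nose, and the attempted reconciliation involving $1/6$, $5/6$, nonstandard expansions, and self-similarity is unnecessary. Your final computation $f_\alpha(1/3)=\tfrac13\sum_{m\ge0}(\alpha/2)^m=(3(1-\alpha/2))^{-1}$, using $\phi(2^m/3)=1/3$ for all $m$, is correct as written.
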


\subsection{Global minima}\label{minima section}

In this section, we discuss the minima of the function $f_\alpha$.

\begin{theorem}\label{minima thm} For the global minima of the function $f_\alpha$, we have the following three cases.
\begin{enumerate}
\item\label{minima thm (a)} For $\alpha\in(-2,-1)$, the function $f_\alpha$ has a unique minimum in $[0,1/2]$, which is located at $t=1/5$. Moreover, the minimum value is
$$f_\alpha(1/5)=\frac{1+\alpha}{5(1-(\alpha/2)^2)}.
$$
\item\label{minima thm (b)} For $\alpha=-1$, the minimum value of $f_\alpha$ is equal to $0$, and the set of minimizers is equal to the set of all $t\in[0,1]$ that have a Rademacher expansion $\bm\rho$ with $\rho_{2n}=\rho_{2n+1}$ for $n\in\bN_0$. This is a perfect set of Hausdorff dimension $1/2$, and its $1/2$-dimensional Hausdorff measure is finite and strictly positive.
\item\label{minima thm (c)} For $\alpha\in(-1,2)$, the unique minimizer of $f_\alpha$ in $[0,1/2]$ is at $t=0$ and the minimum value is $f_\alpha(0)=0$.
\end{enumerate}
\end{theorem}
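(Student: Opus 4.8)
The plan is to derive all three cases from the results of Section~\ref{Takagi class section} applied with $c_m=\alpha^m2^{-m}$, so that $2^mc_m=\alpha^m$. By Remark~\ref{minimizer rem}, the smallest and largest minimizers of $f_\alpha$ in $[0,1/2]$ are $\cT(\bm\lambda^\sharp(\alpha))$ and $\cT(\bm\lambda^\flat(\alpha))$, where $\bm\lambda^\sharp$ and $\bm\lambda^\flat$ are the $\pm1$ sequences determined recursively by the signs of the partial sums $\sum_{m=0}^{n-1}\alpha^m\lambda_m$; by Theorem~\ref{general Takagi step cond thm} applied to $-f_\alpha$, a point $t$ is a minimizer of $f_\alpha$ if and only if it has a Rademacher expansion $\bm\rho$ obeying the \emph{minimizer step condition} $\rho_n\sum_{m=0}^{n-1}\alpha^m\rho_m\ge0$ for all $n\in\bN$; and the minimizer counterpart of Proposition~\ref{Takagi class number of max prop} yields a unique minimizer in $[0,1/2]$ whenever no partial sum $\sum_{m=0}^n\alpha^m\lambda^\sharp_m$ vanishes. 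Since $\phi$ is even and $1$-periodic and $2^m\in\bZ$, we have $f_\alpha(t)=f_\alpha(1-t)$, which transports conclusions from $[0,1/2]$ to $[1/2,1]$.

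Cases (c) and (b) are short. For (c) I would prove by induction that $\bm\lambda^\sharp(\alpha)=\bm\lambda^\flat(\alpha)=(+1,+1,\dots)$: if the first $n$ entries equal $+1$ then $\sum_{m=0}^{n-1}\alpha^m$ equals $n$ when $\alpha=1$ and $(1-\alpha^n)/(1-\alpha)$ otherwise, and a short case check on $[0,1)$, $(1,2)$ and $(-1,0)$ shows this is strictly positive; hence $\tau^\flat(\alpha)=\tau^\sharp(\alpha)=\cT(+1,+1,\dots)=0$, the minimizer in $[0,1/2]$ is unique (no partial sum vanishes), and the minimum value is $f_\alpha(0)=0$ because $\phi(0)=0$. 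For (b) the minimizer step condition becomes $\rho_n\sum_{m=0}^{n-1}(-1)^m\rho_m\ge0$, and a two-line induction shows that any $\pm1$ sequence with $\rho_{2j}=\rho_{2j+1}$ for $j<k$ satisfies $\sum_{m=0}^{2k-1}(-1)^m\rho_m=0$ and $\sum_{m=0}^{2k}(-1)^m\rho_m=\rho_{2k}$, so the step condition is equivalent to $\rho_{2k+1}=\rho_{2k}$ for every $k$. Thus the minimizers of $f_{-1}$ are exactly the $t$ admitting a Rademacher expansion with $\rho_{2n}=\rho_{2n+1}$ for all $n$; writing $\eta_k:=\tfrac12(1-\rho_{2k})\in\{0,1\}$, such a $t$ equals $3\sum_{k\ge0}\eta_k4^{-(k+1)}$, so this set is the attractor of the iterated function system $\{x\mapsto x/4,\ x\mapsto x/4+3/4\}$ on $[0,1]$, which satisfies the open set condition. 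Hence it is a perfect set of Hausdorff dimension $\log2/\log4=1/2$ with finite, strictly positive $1/2$-dimensional Hausdorff measure, and since $0$ belongs to it the minimum value is $f_{-1}(0)=0$.

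Case (a) is the substantial one. I would claim $\bm\lambda^\sharp(\alpha)=\bm\lambda^\flat(\alpha)$ equals the $4$-periodic sequence $\bm\lambda=(+1,+1,-1,-1,+1,+1,-1,-1,\dots)$. The engine is the telescoping relation $S_{4(k+1)}-S_{4k}=\alpha^{4k}(1+\alpha-\alpha^2-\alpha^3)=\alpha^{4k}(1+\alpha)^2(1-\alpha)$, where $S_n:=\sum_{m=0}^{n-1}\alpha^m\lambda_m$, which gives the closed forms $S_{4k}=(1+\alpha)(1-\alpha^{4k})/(1+\alpha^2)$, $S_{4k+1}=S_{4k}+\alpha^{4k}$, $S_{4k+2}=(1+\alpha)(1+\alpha^{4k+2})/(1+\alpha^2)$ and $S_{4k+3}=S_{4k+2}-\alpha^{4k+2}$. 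For $\alpha\in(-2,-1)$ one reads off that $S_n>0$ when $n\equiv0,1\pmod4$ (with $n\ge1$) and $S_n<0$ when $n\equiv2,3\pmod4$; in particular $S_n\neq0$ for every $n\ge1$, so both recursions reproduce $\bm\lambda$ and the minimizer in $[0,1/2]$ is unique. Since $1-\lambda_n$ vanishes for $n\equiv0,1$ and equals $2$ for $n\equiv2,3\pmod4$, formula \eqref{Psi eq} gives $\cT(\bm\lambda)=3\sum_{k\ge0}2^{-(4k+4)}=1/5$. Finally $\phi(2^m/5)$ equals $1/5$ when $m$ is even and $2/5$ when $m$ is odd, so
$$f_\alpha(1/5)=\frac15\sum_{j\ge0}\Bigl(\frac\alpha2\Bigr)^{2j}+\frac25\sum_{j\ge0}\Bigl(\frac\alpha2\Bigr)^{2j+1}=\frac{1+\alpha}{5\bigl(1-(\alpha/2)^2\bigr)},$$
as asserted.

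I expect the main obstacle to be the bookkeeping in Case (a): one must pin down $S_n$ in all four residue classes modulo $4$ and verify the sign pattern uniformly over $\alpha\in(-2,-1)$; the delicate point is the behaviour near $\alpha=-1$, where $1+\alpha\to0$, so that it is the factorization $(1+\alpha)^2(1-\alpha)$---rather than any careless cancellation in $\alpha^4-1$---that keeps the signs under control. Then evaluating $f_\alpha(1/5)$ is routine. Cases (b) and (c) are straightforward once the reduction to $\bm\lambda^\sharp,\bm\lambda^\flat$ and to the minimizer step condition is in place, and the Hausdorff-dimension statement in (b) is a standard application of the open set condition for self-similar sets.
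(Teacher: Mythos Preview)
Your proposal is correct and follows essentially the same approach as the paper: reduce to the minimizer step condition via Remark~\ref{minimizer rem}, identify the sequences $\bm\lambda^\sharp,\bm\lambda^\flat$ explicitly in each regime, check that the partial sums have the required signs, and then evaluate $\cT(\bm\lambda)$ and $f_\alpha$ at the minimizer. The only notable differences are stylistic: in part~(a) you package the argument via closed forms $S_{4k}=(1+\alpha)(1-\alpha^{4k})/(1+\alpha^2)$ and $S_{4k+2}=(1+\alpha)(1+\alpha^{4k+2})/(1+\alpha^2)$, whereas the paper verifies the sign pattern by a direct four-step induction; and in part~(b) you invoke the open set condition for the explicit IFS $\{x/4,\,x/4+3/4\}$, whereas the paper appeals to its Lemma~\ref{Hausdorff lemma} (itself based on Moran's theorem), which amounts to the same thing.
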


The preceding theorem and Remark~\ref{alpha negative rem} yield immediately  the following corollary.

 \begin{corollary}\label{nonpositive cor}The function $f_\alpha(t)$ is nonnegative for all $t\in[0,1]$ if and only if $\alpha\ge-1$. Moreover, there is no $\alpha\in(-2,2)$ such that $f_\alpha$ is nonpositive.  
 \end{corollary}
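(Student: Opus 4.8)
The plan is to extract both statements directly from Theorem~\ref{minima thm} and Remark~\ref{alpha negative rem}, the only auxiliary fact being the symmetry $f_\alpha(1-t)=f_\alpha(t)$ — immediate from $\phi(2^m(1-t))=\phi(2^m-2^mt)=\phi(2^mt)$, since $2^m\in\bZ$ — which lets me turn any statement about minimizers ``in $[0,1/2]$'' into one about minimizers ``in $[0,1]$''.

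For the equivalence ``$f_\alpha\ge0$ on $[0,1]\iff\alpha\ge-1$'' I would argue in two steps. First, for $\alpha\in[-1,2)$, parts~\ref{minima thm (b)} and~\ref{minima thm (c)} of Theorem~\ref{minima thm} say that the global minimum value of $f_\alpha$ is $0$, so $f_\alpha\ge0$. Second, for $\alpha\in(-2,-1)$, part~\ref{minima thm (a)} gives the minimum value $f_\alpha(1/5)=\frac{1+\alpha}{5\bigl(1-(\alpha/2)^2\bigr)}$; I would then just check signs: the numerator $1+\alpha$ is strictly negative since $\alpha<-1$, and the denominator $1-(\alpha/2)^2=(1-\alpha/2)(1+\alpha/2)$ is strictly positive since $-2<\alpha<2$, so $f_\alpha(1/5)<0$ and $f_\alpha$ is not nonnegative.

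For the second assertion I would suppose, toward a contradiction, that $f_\alpha\le0$ on $[0,1]$ for some $\alpha\in(-2,2)$, and split on the sign of $\alpha$. If $\alpha\in(-2,-1)$, Remark~\ref{alpha negative rem} says the right-hand side of~\eqref{alpha negative thm minimum eq} exceeds $1/2$, and by Theorem~\ref{alpha negative thm} that number is the maximum value of $f_\alpha$ — contradicting $f_\alpha\le0$. If $\alpha\in[-1,2)$, the first part gives $\min f_\alpha=0$, so $f_\alpha\le0$ would force $f_\alpha\equiv0$; but parts~\ref{minima thm (b)} and~\ref{minima thm (c)} of Theorem~\ref{minima thm} describe the set of global minimizers as a proper subset of $[0,1]$ (the indicated perfect null set when $\alpha=-1$, and $\{0,1\}$ when $\alpha\in(-1,2)$), so $f_\alpha$ is nonconstant — a contradiction. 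Hence no such $\alpha$ exists. (A one-line alternative: $f_\alpha(1/2)=1/2>0$ for every $\alpha$, as already noted in the proof of Proposition~\ref{alpha in -1 1/2 prop}, which by itself rules out nonpositivity; I would likely record this as a remark.)

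I do not expect any genuine obstacle here: the whole argument is bookkeeping against the explicit formulas in Theorem~\ref{minima thm} and the inequality in Remark~\ref{alpha negative rem}. The only spots that need a second's attention are the positivity of the denominator $1-(\alpha/2)^2$ on $(-2,-1)$ and the use of the symmetry $f_\alpha(1-t)=f_\alpha(t)$ to lift the ``$[0,1/2]$'' statements to all of $[0,1]$.
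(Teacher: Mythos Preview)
Your proposal is correct and follows exactly the route the paper indicates: the paper states only that the corollary follows immediately from Theorem~\ref{minima thm} and Remark~\ref{alpha negative rem}, and your argument is a faithful unpacking of that. The one-line alternative $f_\alpha(1/2)=1/2>0$ you mention is indeed the cleanest way to handle the nonpositivity claim and is already recorded in the proof of Proposition~\ref{alpha in -1 1/2 prop}.
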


	The fact that $f_\alpha\ge0$ for $\alpha\ge-1$ can alternatively be deduced from an argument in the proof of~\cite[Theorem 4.1]{GalkinGalkina}.

\section{Real (step) roots of Littlewood polynomials}\label{step roots section}

In this section, we link our analysis of the maxima of the Takagi--Landsberg functions to certain real roots of the Littlewood polynomials. Recall that a Littlewood polynomial is a polynomial whose coefficients are all $-1$ or $+1$. By Corollary 3.3.1 of~\cite{BaradaranTaghavi}, the complex roots of any Littlewood polynomial must lie in the annulus $\{z\in\bC\,|\,1/2<|z|<2\}$. Hence, the real roots can only lie in $(-2,-1/2)\cup(1/2,2)$. Below, we will show in Corollary~\ref{dense cor} that the real roots are actually dense in that set. We start with the following simple lemma.

\begin{lemma}\label{rational root lemma}
The numbers $-1$ and $+1$ are the only  rational roots for Littlewood polynomials.
\end{lemma}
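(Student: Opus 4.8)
The plan is to prove Lemma~\ref{rational root lemma} using the rational root theorem together with the normalization built into Littlewood polynomials. Let $P(x)=\sum_{k=0}^n a_kx^k$ be a Littlewood polynomial, so each $a_k\in\{-1,+1\}$; in particular $a_0=\pm1$ and the leading coefficient $a_n=\pm1$. Suppose $p/q$ is a rational root in lowest terms, with $q>0$. By the rational root theorem, $p$ divides $a_0$ and $q$ divides $a_n$. Since $|a_0|=|a_n|=1$, this forces $p\in\{-1,0,+1\}$ and $q=1$, so the only candidate rational roots are $-1$, $0$, and $+1$. Now $0$ is excluded because $P(0)=a_0=\pm1\neq0$. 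Hence the only possible rational roots are $\pm1$, and it remains to observe that each of these actually does occur as a root of some Littlewood polynomial.

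For the realization part, note that $x=-1$ is a root of $P(x)=1+x$ (equivalently $-1-x$), and $x=+1$ is a root of $P(x)=1-x$. More systematically, $x=-1$ is a root of $1+x$ and $x=1$ is a root of $1-x$, both of which have all coefficients in $\{-1,+1\}$; so both values are genuinely attained and the characterization is sharp rather than vacuous. I would include this one-line remark so the statement "are the only rational roots" is not misread as "no Littlewood polynomial has a rational root."

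I do not expect any genuine obstacle here: the entire content is the rational root theorem plus the observation that $\pm1$ are the only integers dividing $1$. The only point requiring a moment's care is the bookkeeping around the case $p=0$ (i.e. whether $0$ should count), which is dispatched immediately by $P(0)=a_0\neq0$. If one wanted to avoid even citing the rational root theorem, an alternative is the direct estimate: if $|x|<1$ then $|a_0|=1>\sum_{k\geq1}|a_k||x|^k$ would need checking and in fact fails in general, so the clean route really is the rational root theorem; I would therefore simply invoke it. The proof is thus three short steps: (i) apply the rational root theorem to conclude $p,q\in\{-1,0,1\}$ with $q=1$; (ii) rule out $x=0$ via $P(0)\neq0$; (iii) exhibit $1+x$ and $1-x$ to show $-1$ and $+1$ both occur.
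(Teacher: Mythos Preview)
Your proof is correct. The approach differs slightly from the paper's: both arguments first reduce to showing that a rational root must be an integer (you via the rational root theorem, the paper via Gauss's lemma---essentially the same idea), but then diverge. You use the divisibility constraint $p\mid a_0=\pm1$ to conclude directly that the only candidates are $\pm1$ (after dispatching $0$). The paper instead invokes the external bound from~\cite{BaradaranTaghavi} that all roots of Littlewood polynomials lie in the annulus $1/2<|z|<2$, which forces an integer root to satisfy $|\alpha|=1$. Your route is more self-contained, since it avoids that citation; the paper's route reuses a fact already stated in the surrounding text. You also add the verification that $1+x$ and $1-x$ realize the roots $-1$ and $+1$, which the paper omits; this is a nice touch that makes the ``only'' in the statement fully justified.
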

\begin{proof}
Assume $\alpha \in \mathbb{Q}$ is a rational root for some Littlewood polynomial $P_n(x)$. Then the monic polynomial $x - \alpha$ divides $P_n(x)$. The Gauss lemma yields that $x - \alpha \in \mathbb{Z}[x]$ and hence $\alpha \in \mathbb{Z}$. By the above-mentioned  Corollary 3.3.1 of~\cite{BaradaranTaghavi}, we get $|\alpha| = 1$. 
\end{proof}

\begin{definition}
For given $n\in\bN$, let 
$P_n(x)=\sum_{m=0}^n\rho_mx^m
$
be a Littlewood polynomial with coefficients $\rho_m\in\{-1,+1\}$. If $k\le n$, we write $P_k(x)=\sum_{m=0}^k\rho_mx^m$. A number $\alpha\in\bR$ is called a \emph{step root} of $P_n$ if $P_n(\alpha)=0$ and $\rho_{k+1}P_k(\alpha)\le0$ for $k=0,\dots, n-1$. \end{definition}

 The concept of a step root has the following significance for the maxima of the Takagi--Landsberg functions $f_\alpha$ defined in \eqref{Takagi-landsberg alpha parameterization eq}. 
 
\begin{corollary}\label{max unique step root cor}For $\alpha\in(-2,2)$, the following conditions are equivalent.
\begin{enumerate}
\item The function $f_\alpha$ has a unique maximizer in $[0,1/2]$.
\item There is no Littlewood polynomial that has $\alpha$ as its step root.
\end{enumerate}
\end{corollary}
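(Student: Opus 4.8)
The plan is to deduce the corollary from Proposition~\ref{continuity prop}. That proposition already gives the equivalence of condition~(a) with the statement that there is \emph{no} $n\in\bN$ with $\sum_{m=0}^n\alpha^m\rho_m^\sharp(\alpha)=0$, while condition~(b) is the negation of the statement that $\alpha$ is a step root of \emph{some} Littlewood polynomial. Hence it suffices to prove the equivalence: $\alpha$ is a step root of some Littlewood polynomial if and only if there exists $n\in\bN$ with $\sum_{m=0}^n\alpha^m\rho_m^\sharp(\alpha)=0$. The connecting observation is just an unwinding of definitions: for $P_n(x)=\sum_{m=0}^n\rho_mx^m$ with all $\rho_m\in\{-1,+1\}$, the defining inequalities $\rho_{k+1}P_k(\alpha)\le0$ for $k=0,\dots,n-1$ of a step root are exactly the step condition for $\alpha$ (Definition~\ref{exponential Takagi step cond def}) applied to the finite tuple $(\rho_0,\dots,\rho_n)$, and $P_n(\alpha)=0$ is the identity $\sum_{m=0}^n\alpha^m\rho_m=0$. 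So $\alpha$ being a step root of a degree-$n$ Littlewood polynomial amounts to the existence of a $\{-1,+1\}$-valued tuple $(\rho_0,\dots,\rho_n)$ obeying the step condition for $\alpha$ whose $\alpha$-weighted sum vanishes.

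For the ``if'' direction I would take the smallest $n$ with $\sum_{m=0}^n\alpha^m\rho_m^\sharp(\alpha)=0$; since $\rho_0^\sharp(\alpha)=1$, necessarily $n\ge1$. The sequence $\bm\rho^\sharp(\alpha)$ satisfies the step condition for $\alpha$ directly from~\eqref{rho+- eq bis}: if $\sum_{m=0}^{k}\alpha^m\rho_m^\sharp(\alpha)\le0$ then $\rho_{k+1}^\sharp(\alpha)=+1$, and if it is $>0$ then $\rho_{k+1}^\sharp(\alpha)=-1$, so the product $\rho_{k+1}^\sharp(\alpha)\sum_{m=0}^k\alpha^m\rho_m^\sharp(\alpha)$ is always $\le0$. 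Thus $P_n(x):=\sum_{m=0}^n\rho_m^\sharp(\alpha)x^m$ is a Littlewood polynomial having $\alpha$ as a step root.

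For the ``only if'' direction, let $\alpha$ be a step root of a Littlewood polynomial $P_n(x)=\sum_{m=0}^n\rho_mx^m$; replacing $P_n$ by $-P_n$, which preserves both being Littlewood and having $\alpha$ as a step root, we may assume $\rho_0=+1$. Set $S_k:=\sum_{m=0}^k\alpha^m\rho_m$, so $S_0=1\neq0$ and $S_n=0$; let $n_0\in\{1,\dots,n\}$ be minimal with $S_{n_0}=0$. The claim is that $\rho_m=\rho_m^\sharp(\alpha)$ for all $m\le n_0$, which I would prove by induction on $m$: the base case is $\rho_0=1=\rho_0^\sharp(\alpha)$, and if the equality holds up to some index $m<n_0$, then $S_m$ coincides with the $m$-th partial sum of $\bm\rho^\sharp(\alpha)$ and is nonzero, so the step-root inequality $\rho_{m+1}S_m\le0$ (available since $m+1\le n_0\le n$) forces $\rho_{m+1}=+1$ when $S_m<0$ and $\rho_{m+1}=-1$ when $S_m>0$, which is exactly the rule~\eqref{rho+- eq bis} defining $\rho_{m+1}^\sharp(\alpha)$. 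Taking $m=n_0$ gives $\sum_{m=0}^{n_0}\alpha^m\rho_m^\sharp(\alpha)=S_{n_0}=0$, which is what we wanted.

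I expect the ``only if'' direction to contain the one genuinely delicate point. A step-root sequence need not agree with $\bm\rho^\sharp(\alpha)$ beyond the first index where the partial sum vanishes, since at that index the step condition no longer pins down the next sign. The argument goes through because \emph{up to} that first vanishing the step condition forces the sign to be opposite to the current partial sum, which is precisely the choice made by the $\bm\rho^\sharp$ recursion; hence the two sequences must coincide on that initial segment, and that is all we use.
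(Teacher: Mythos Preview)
Your proof is correct and follows essentially the same route as the paper. The paper's one-line proof (``follows immediately from Proposition~\ref{continuity prop} and Theorem~\ref{general Takagi step cond thm}'') is simply a terse pointer to the argument you have spelled out: use Proposition~\ref{continuity prop}\,(a)$\Leftrightarrow$(c) and then identify the existence of $n$ with $\sum_{m=0}^n\alpha^m\rho_m^\sharp(\alpha)=0$ with $\alpha$ being a step root of some Littlewood polynomial. Your handling of the ``only if'' direction---normalizing to $\rho_0=+1$, passing to the minimal vanishing index $n_0$, and inductively matching $(\rho_0,\dots,\rho_{n_0})$ with $(\rho_0^\sharp(\alpha),\dots,\rho_{n_0}^\sharp(\alpha))$ because the nonzero partial sums force the next sign uniquely---is exactly the mechanism behind Lemma~\ref{sign lemma}, and is the substantive content the paper leaves implicit.
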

\begin{proof}
The assertion follows immediately from Proposition~\ref{continuity prop} and Theorem~\ref{general Takagi step cond thm}.
\end{proof}

With our results on the maxima of the Takagi--Landsberg function, we thus get the following corollary on the locations of the step roots of the Littlewood polynomials.

\begin{corollary}\label{step root location cor}We have the following results.
\begin{enumerate}
\item The only Littlewood polynomials admitting negative step roots are of the form
$1-x-x^2-\cdots-x^{2n}$ for some $n\in\bN$
and the step roots are the numbers $x_n$ in Lemma~\ref{Littlewood neg root lemma}.
\item There are no step roots in $[-1,1/2]\cup(1,2)$.
\item The step roots are dense in $(1/2,1]$.
\end{enumerate}
\end{corollary}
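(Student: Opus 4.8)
The unifying tool is Corollary~\ref{max unique step root cor}, which states that $\alpha\in(-2,2)$ is a step root of some Littlewood polynomial if and only if $f_\alpha$ has more than one maximizer in $[0,1/2]$; equivalently, by Proposition~\ref{continuity prop}, if and only if $\tau^\flat$ and $\tau^\sharp$ fail to be continuous at $\alpha$, equivalently some partial sum $\sum_{m=0}^k\alpha^m\rho_m^\sharp(\alpha)$ vanishes. Since the complex roots of a Littlewood polynomial lie in $\{1/2<|z|<2\}$, every real step root lies in $(-2,-1/2)\cup(1/2,2)\subset(-2,2)$, so all three parts can be read off from the regime-by-regime description of the maximizers of $f_\alpha$. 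This immediately yields (b) and (c). For $\alpha\in[-1,1/2]$, Proposition~\ref{alpha in -1 1/2 prop} gives the unique maximizer $t=1/2$, and for $\alpha\in(1,2)$, Theorem~\ref{alpha in (1,2) thm} gives exactly the maximizers $1/3$ and $2/3$, of which only $1/3$ lies in $[0,1/2]$; in both cases the maximizer in $[0,1/2]$ is unique, so by the bridge there is no step root, proving (b). For (c), suppose some nonempty open interval $I\subset(1/2,1)$ contained no step root. Then for every $\alpha\in I$ the function $f_\alpha$ would have a unique maximizer in $[0,1/2]$, so by Proposition~\ref{continuity prop} both $\tau^\flat$ and $\tau^\sharp$ would be continuous at every point of $I$, contradicting Theorem~\ref{nowhere flat thm}. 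Hence every such $I$ meets the set of step roots, and since $(1/2,1)$ is dense in $(1/2,1]$ this gives density in $(1/2,1]$.

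For part (a), the annulus bound places a negative step root in $(-2,-1/2)$, and part (b) removes $[-1,-1/2]$, so it lies in $(-2,-1)$. For $\alpha\in(-2,-1)$, Theorem~\ref{alpha negative thm} shows that $f_\alpha$ has a non-unique maximizer in $[0,1/2]$ precisely when $\alpha=x_n$; thus, by the bridge, the negative step roots are exactly the numbers $x_n$ of Lemma~\ref{Littlewood neg root lemma}. It remains to identify which Littlewood polynomials realize $x_n$ as a step root. First I would verify that $x_n$ is a step root of $p_{2n}=1-x-\cdots-x^{2n}$: writing $p_k(x)=1-x-\cdots-x^k$ and using $p_{2n}(x_n)=0$ together with the identity $p_k(x_n)=\sum_{m=k+1}^{2n}x_n^m=(x_n^{k+1}-x_n^{2n+1})/(1-x_n)$, the dominant term $-x_n^{2n+1}>0$ forces $p_k(x_n)>0$ for $0\le k\le 2n-1$, which are exactly the step-root inequalities for the coefficient pattern $(+1,-1,\dots,-1)$. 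In particular the coefficient sequence of $p_{2n}$ coincides with $\bm\rho^\sharp(x_n)$ through index $2n$, and $\sum_{m=0}^{2n}x_n^m\rho_m^\sharp=0$.

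For uniqueness, let $Q=\sum_{m=0}^N\rho_mx^m$ be any Littlewood polynomial with $x_n$ as a step root; after replacing $Q$ by $-Q$ we may assume $\rho_0=+1$. The step-root inequalities together with $p_k(x_n)>0$ for $k<2n$ force $\rho_1=\cdots=\rho_{2n}=-1$, so $Q$ agrees with $p_{2n}$ up to degree $2n$ and $\sum_{m=0}^{2n}\rho_mx_n^m=0$; if $N=2n$ then $Q=p_{2n}$. Suppose $N>2n$. By Theorem~\ref{alpha negative thm} and Proposition~\ref{Takagi class number of max prop}, $f_{x_n}$ has exactly two maximizers in $[0,1/2]$ and no dyadic maximizers, so the set $\cZ$ for $\bm\rho^\sharp(x_n)$ is the singleton $\{2n\}$; hence $\sum_{m=0}^k x_n^m\rho_m^\sharp\neq0$ for all $k>2n$. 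At the branch point, either $\rho_{2n+1}=+1$, in which case the step condition forces the continuation to equal $\bm\rho^\sharp(x_n)$, or $\rho_{2n+1}=-1$, in which case flipping all signs after index $2n$ turns the partial sums into $-\sum_{m=0}^kx_n^m\rho_m^\sharp$ and preserves the step condition, so the continuation is forced to be that sign-flip. In either case every partial sum beyond index $2n$ is nonzero, whence $Q(x_n)=\sum_{m=0}^Nx_n^m\rho_m\neq0$, contradicting $Q(x_n)=0$. Thus $N=2n$ and $Q=\pm p_{2n}$, which completes (a).

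The main obstacle is precisely this uniqueness half of part (a): ruling out higher-degree Littlewood polynomials with $x_n$ as a step root. The crux is that after the unique vanishing of the partial sums at index $2n$, the step condition admits exactly two continuations, $\bm\rho^\sharp(x_n)$ and its sign-flip, both with nowhere-vanishing partial sums, which is what prevents $Q(x_n)=0$ for $N>2n$. Everything else reduces cleanly, through Corollary~\ref{max unique step root cor} and Proposition~\ref{continuity prop}, to the maximizer counts already established in Theorem~\ref{alpha negative thm}, Proposition~\ref{alpha in -1 1/2 prop}, Theorem~\ref{alpha in (1,2) thm}, and Theorem~\ref{nowhere flat thm}.
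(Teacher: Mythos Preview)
Your proof is correct and follows the same route as the paper: parts (b) and (c) are deduced, via Corollary~\ref{max unique step root cor} and Proposition~\ref{continuity prop}, from Proposition~\ref{alpha in -1 1/2 prop}, Theorem~\ref{alpha in (1,2) thm}, and Theorem~\ref{nowhere flat thm}, exactly as in the paper. For part (a) the paper simply writes that it follows from Theorem~\ref{alpha negative thm}; you go further and supply the argument the paper leaves implicit, namely that $p_{2n}$ is (up to sign) the \emph{only} Littlewood polynomial with $x_n$ as a step root. Your method---extracting $|\cZ(x_n)|=1$ from the maximizer count in Theorem~\ref{alpha negative thm} via Proposition~\ref{Takagi class number of max prop}, and then arguing that any step-condition continuation past index $2n$ is forced to be $\bm\rho^\sharp(x_n)$ or its sign-flip with nowhere-vanishing partial sums---is exactly the mechanism behind the paper's citation (it is what Lemmas~\ref{alpha negative first rho entries lemma} and~\ref{alpha negative sequence lemma} establish in the course of proving Theorem~\ref{alpha negative thm}), so this is a faithful expansion rather than a different approach.
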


\begin{proof}In view of Corollary~\ref{max unique step root cor}, (a) follows from Theorem~\ref{alpha negative thm}. Assertion (b) follows from Proposition~\ref{alpha in -1 1/2 prop}  and Theorem~\ref{alpha in (1,2) thm}. Part (c) follows from Theorem~\ref{nowhere flat thm}. 
\end{proof}

From part (c) of the preceding corollary, we obtain the following result, which identifies  $[-2,-1/2]\cup[1/2,2]$ as the closure of the set of all real roots of the Littlewood polynomials. Although the roots of the Littlewood polynomials have been well studied in the literature (see, e.g,~\cite{Borwein} and the references therein), we were unable to find the following result in the literature. In \cite[E1 on p.~72]{Borwein}, it is stated that an analogous result holds if the Littlewood polynomials are replaced by the larger set of all polynomials with coefficients in $\{-1,0,+1\}$. In the student thesis \cite{Vader},  determining the closure of the real roots of the Littlewood polynomials was classified as an open problem. The distribution of the positive roots and step roots of Littlewood polynomials is illustrated in Figure \ref{roots fig}.

\begin{corollary}\label{dense cor}Let $\cR$ denote the set of all real roots of the Littlewood polynomials. Then the closure of $\cR$ is given by 
	 $[-2,-1/2]\cup[1/2,2]$.
\end{corollary}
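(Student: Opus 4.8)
The plan is to prove the two inclusions $\bbar{\cR}\subseteq[-2,-1/2]\cup[1/2,2]$ and $[-2,-1/2]\cup[1/2,2]\subseteq\bbar{\cR}$ separately. The first inclusion is essentially immediate: by Corollary 3.3.1 of~\cite{BaradaranTaghavi}, every complex root of every Littlewood polynomial lies in the open annulus $\{1/2<|z|<2\}$, so every real root lies in $(-2,-1/2)\cup(1/2,2)$, and hence $\bbar\cR$ is contained in the closure of that set, which is exactly $[-2,-1/2]\cup[1/2,2]$. So the real content is the reverse inclusion, i.e.\ showing that $\cR$ is dense in $[-2,-1/2]\cup[1/2,2]$.

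For the reverse inclusion I would split $[-2,-1/2]\cup[1/2,2]$ into the four pieces $[-2,-1]$, $[-1,-1/2]$, $[1/2,1]$, and $[1,2]$, and show $\cR$ is dense in each. For $[1/2,1]$, density is given directly by Corollary~\ref{step root location cor}(c): the step roots are dense in $(1/2,1]$, and step roots are in particular roots, so $\cR$ is dense in $[1/2,1]$. For $[-2,-1]$, Lemma~\ref{Littlewood neg root lemma} provides the roots $x_n$ of $1-x-\cdots-x^{2n}$ with $x_n\uparrow-1$ and $x_1=-\tfrac12(1+\sqrt5)$ close to $-2$; but these alone accumulate only at $-1$, so I need more. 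The cleanest fix is to observe that if $P(x)=\sum_{m=0}^n\rho_m x^m$ is Littlewood with root $\alpha$, then $P(1/x)\cdot x^n=\sum_{m=0}^n\rho_{n-m}x^m$ is again Littlewood with root $1/\alpha$; hence $\cR$ is invariant under $\alpha\mapsto 1/\alpha$. Applying this to the density in $[1/2,1]$ gives density of $\cR$ in $[1,2]$ (since $x\mapsto 1/x$ maps $[1/2,1]$ onto $[1,2]$ homeomorphically). Similarly, $P(-x)$ is Littlewood with root $-\alpha$, so $\cR$ is invariant under $\alpha\mapsto-\alpha$; combining with the $1/x$-invariance, density in $[1/2,1]$ propagates to density in $[-2,-1]$ and $[-1,-1/2]$. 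Since $1/2$, $1$, $2$ themselves are limits of interior points, the four closed pieces are covered and we conclude $\bbar\cR=[-2,-1/2]\cup[1/2,2]$.

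The main obstacle, and the only place where real work is hidden, is the density of step roots in $(1/2,1]$ underlying Corollary~\ref{step root location cor}(c)---but that is already established via Theorem~\ref{nowhere flat thm} and may be invoked. Given that, the remaining argument is just the two elementary symmetry observations ($\alpha\mapsto1/\alpha$ and $\alpha\mapsto-\alpha$ preserve the Littlewood property and hence $\cR$) together with the containment from~\cite{BaradaranTaghavi}; no delicate estimates are needed. One should take a little care that the symmetry maps are homeomorphisms of the relevant closed intervals so that \emph{dense} subsets map to \emph{dense} subsets, and that the three \lq\lq seam" points $\pm1/2$, $\pm1$, $\pm2$ are in the closure because each is a two-sided (or one-sided, for $\pm2$) limit of roots already produced; both points are routine.

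\begin{proof}
We first show $\bbar\cR\subseteq[-2,-1/2]\cup[1/2,2]$. By Corollary 3.3.1 of~\cite{BaradaranTaghavi}, every complex root of every Littlewood polynomial lies in the annulus $\{z\in\bC\,:\,1/2<|z|<2\}$. In particular, every real root lies in $(-2,-1/2)\cup(1/2,2)$, so $\cR\subseteq(-2,-1/2)\cup(1/2,2)$ and therefore $\bbar\cR\subseteq[-2,-1/2]\cup[1/2,2]$.

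For the reverse inclusion, note two symmetries of $\cR$. If $P(x)=\sum_{m=0}^n\rho_mx^m$ is a Littlewood polynomial and $P(\alpha)=0$ with $\alpha\neq0$, then $x^nP(1/x)=\sum_{m=0}^n\rho_{n-m}x^m$ is again a Littlewood polynomial and vanishes at $1/\alpha$; also $P(-x)=\sum_{m=0}^n(-1)^m\rho_mx^m$ is Littlewood and vanishes at $-\alpha$. Hence $\cR$ is invariant under the maps $\alpha\mapsto1/\alpha$ and $\alpha\mapsto-\alpha$.

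By Corollary~\ref{step root location cor}(c), the step roots, and hence the elements of $\cR$, are dense in $(1/2,1]$; since $1/2$ is a limit point of $(1/2,1]$, it follows that $\cR$ is dense in $[1/2,1]$. The map $\alpha\mapsto1/\alpha$ is a homeomorphism of $[1/2,1]$ onto $[1,2]$, so $\cR$ is dense in $[1,2]$ as well. Combining these, $\cR$ is dense in $[1/2,2]$. Applying the homeomorphism $\alpha\mapsto-\alpha$, we conclude that $\cR$ is dense in $[-2,-1/2]$. Therefore $[-2,-1/2]\cup[1/2,2]\subseteq\bbar\cR$, and together with the first part, $\bbar\cR=[-2,-1/2]\cup[1/2,2]$.
\end{proof}
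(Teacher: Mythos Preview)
Your proof is correct and follows essentially the same route as the paper: both use the containment from \cite{BaradaranTaghavi}, density of step roots in $(1/2,1]$ from Corollary~\ref{step root location cor}(c), and the two symmetries $\alpha\mapsto 1/\alpha$ and $\alpha\mapsto -\alpha$ of $\cR$ to propagate density to all of $[-2,-1/2]\cup[1/2,2]$. Your write-up is slightly more explicit about the homeomorphism point, but the argument is the same.
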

\begin{proof} We know from~\cite[Corollary 3.3.1]{BaradaranTaghavi} that $\cR\subset(-2,-1/2)\cup(1/2,2)$. Now denote by $\cS$ the set of all step roots of the Littlewood polynomials, so that $\cS\subset\cR$. Corollary~\ref{step root location cor} (c) yields that $[1/2,1]$ is contained in the closure of $\cS$, and hence also in the closure of $\cR$.
Next, note that if $\alpha$ is the root of a Littlewood polynomial, then so is $1/\alpha$. Indeed,  if $\alpha$ is a root of the Littlewood polynomial $P(x)$, then $\wt P(x):=x^nP(1/x)$ is also a Littlewood polynomial and satisfies $\wt P(1/\alpha)=\alpha^{-n}P(\alpha)=0$. Hence, $[1,2]$ is contained in the closure of $\cR$. Finally, for $\alpha\in\cR$, we clearly have also $-\alpha\in\cR$. This completes the proof.
\end{proof}

\begin{figure}
\begin{center}
\includegraphics[width=7.5cm]{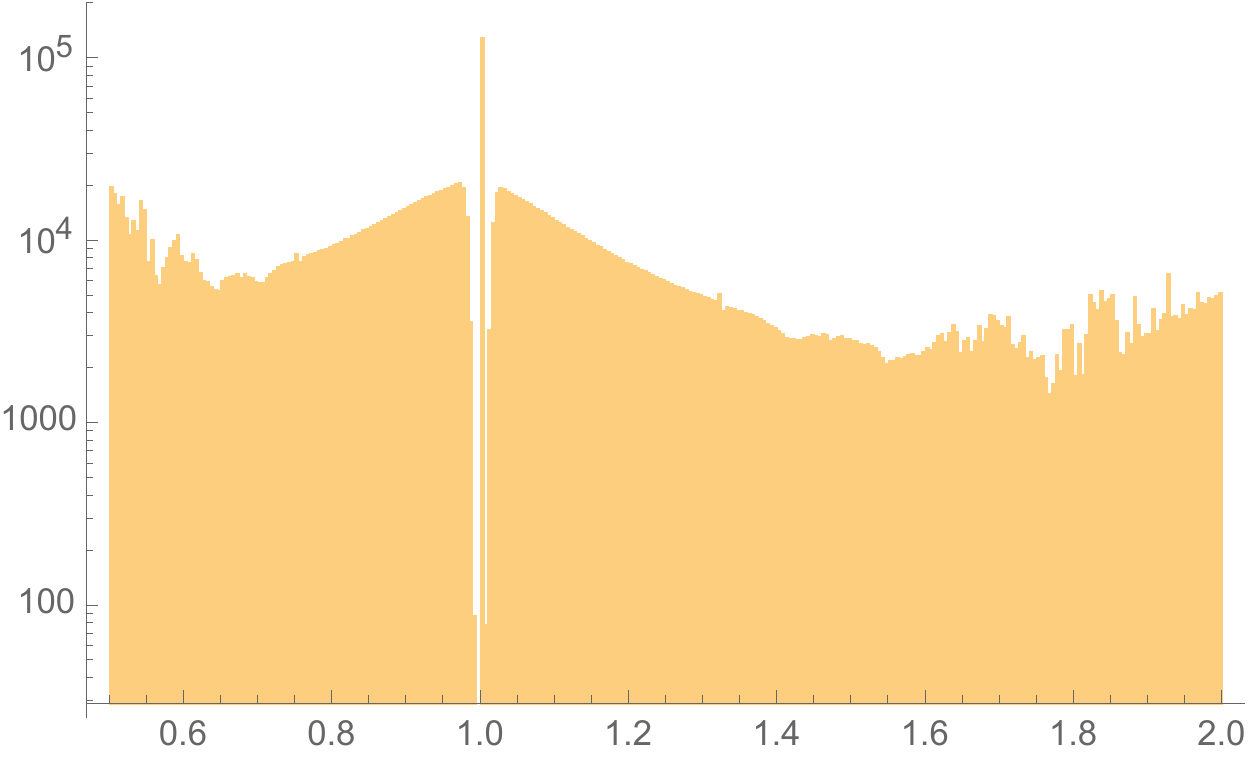}\qquad
\includegraphics[width=7.5cm]{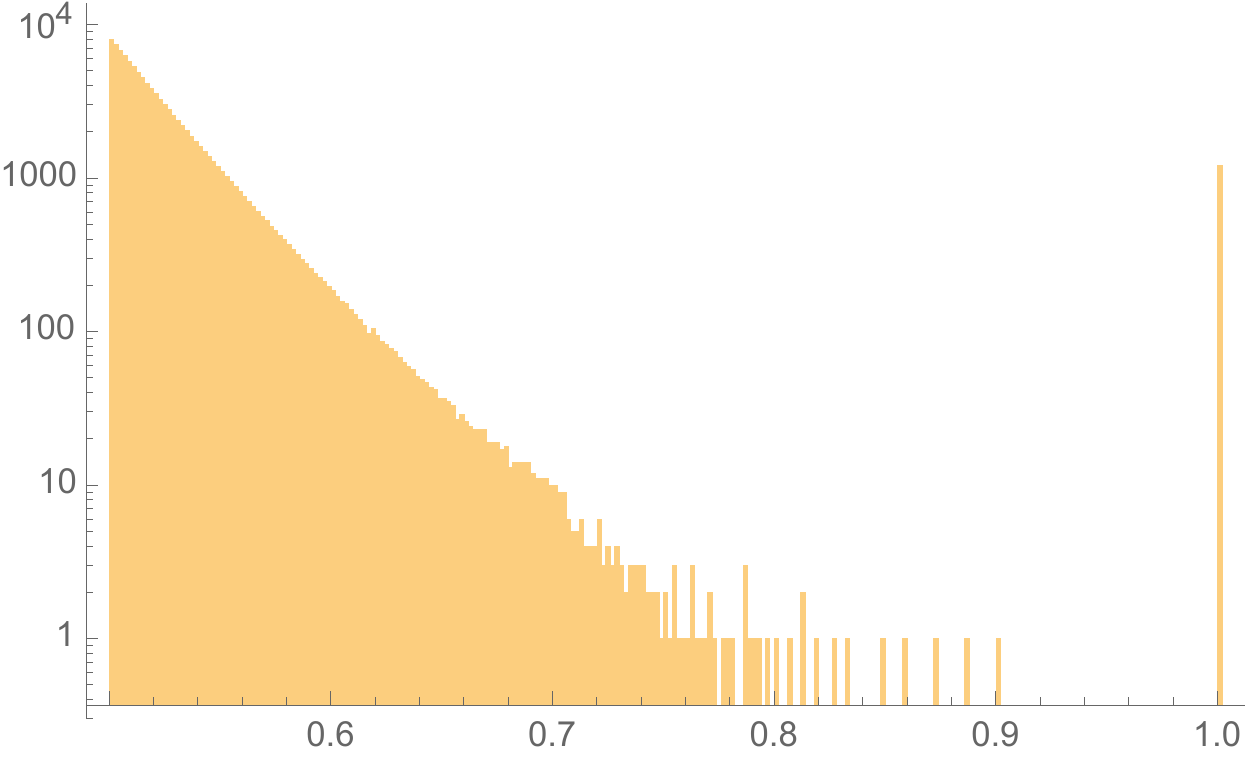}
\end{center}
\caption{Log-scale histograms of the distributions of the positive roots (left) and step roots (right) of the Littlewood polynomials of degree $\le 20$ and with zero-order coefficient $\rho_0=+1$. The algorithm found 2,255,683 roots and 106,682 step roots, where numbers such as $\alpha=1$ were counted  once each time they occurred as (step) roots of some polynomial.}
\label{roots fig}
\end{figure}

\section{Proofs of the results in Section~\ref{Takagi class section}}\label{Takagi class proofs}

\begin{proof}[Proof of Lemma~\ref{Rademacher f lemma}] Take $m\in\bN_0$ and let  $t\in[0,1]$ have Rademacher expansion $\bm\rho$. Then the tent map satisfies
$$\phi(t)=\frac14-\frac14\sum_{k=1}^\infty 2^{-k}\rho_0\rho_k\quad\text{and}\quad \phi(2^mt)=\frac14-\frac14\sum_{k=1}^\infty 2^{-k}\rho_m\rho_{m+k}.
$$
 Plugging formula this into \eqref{Takagi class eq} gives the result.
\end{proof}

 By 
\begin{equation*}
f_{n}(t):=\sum_{m=0}^n c_m\phi(2^mt),\qquad t\in[0,1],
\end{equation*}
we will denote the corresponding truncated function. 

Let
\begin{equation*}
\bD_n:=\{k2^{-n}\,|\,k=0,\dots,2^n\}\end{equation*}
be the dyadic partition of $[0,1]$ of generation $n$. 
For $t\in\bD_n$, we define its set of neighbors in $\bD_n$ by
\begin{equation*}
\cN_n(t)=\{s\in\bD_n\,|\,|t-s|=2^{-n}\}.
\end{equation*}
If $s\in\cN_n(t)$, we will say that $s$ and $t$ are neighboring points in $\bD_n$. We are now going to analyze the maxima of the truncated function $f_{n}$. Since this function is affine on all intervals of the form $[k2^{-(n+1)},(k+1)2^{-(n+1)}]$, it is clear that its maximum must be attained on $\bD_{n+1}$. In addition, $f_n$ can have flat parts (e.g., $n=0$ and $c_0=0$), so that the set of maximizers of $f_n$ may be an uncountable set. In the sequel, we are only interested in the set
\begin{equation*}
\cM_n=\bD_{n+1}\cap\argmax f_n
\end{equation*}
of maximizers located in $\bD_{n+1}$.

\begin{definition}\label{maximizing edge def}For $n\in\bN_0$, a pair $(x_n,y_n)$ is called a \emph{maximizing edge of generation $n$} if the following conditions are satisfied:
\begin{enumerate}
\item $x_n\in\cM_n$;
\item   $y_n$ is a maximizer of $f_n$ in $\cN_{n+1}(x_n)$.
\end{enumerate}\end{definition}

The following lemma characterizes the maximizing edges of generation $n$ as the maximizers of $f_n$ over neighboring pairs in $\bD_{n+1}$. It will be a key result for our proof of Theorem~\ref{general Takagi step cond thm}.

\begin{lemma}\label{key lemma}For $n\in\bN_0$, the following conditions are equivalent for two neighboring points $x_n,\,y_n\in\bD_{n+1}$.
\begin{enumerate}
\item $(x_n,y_n)$ or $(y_n,x_n)$ is a maximizing edge of generation $n$.
\item For all neighboring points $z_0,z_1$ in $\bD_{n+1}$, we have $f_n(z_0)+f_n(z_1)\le f_n(x_n)+f_n(y_n)$. 
\end{enumerate}
\end{lemma}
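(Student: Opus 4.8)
The plan is to prove the equivalence (a) $\Leftrightarrow$ (b) by relating both conditions to the behavior of $f_n$ on the finer partition $\bD_{n+1}$, where $f_n$ is piecewise affine. First I would observe that, because $f_n$ is affine on each interval $[k2^{-(n+1)},(k+1)2^{-(n+1)}]$, its maximum over $[0,1]$ is attained on $\bD_{n+1}$, and if $x_n$ maximizes $f_n$ then $x_n\in\bD_{n+1}$ and at least one neighbor $y_n\in\cN_{n+1}(x_n)$ exists (with an endpoint case handled separately). Thus every maximizing edge $(x_n,y_n)$ consists of neighboring points in $\bD_{n+1}$, and the key content of the lemma is that such an edge maximizes the quantity $f_n(\cdot)+f_n(\cdot)$ over \emph{all} neighboring pairs.

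For the direction (a) $\Rightarrow$ (b): suppose $(x_n,y_n)$ is a maximizing edge. Since $x_n\in\cM_n=\bD_{n+1}\cap\argmax f_n$, we have $f_n(x_n)\ge f_n(z)$ for every $z\in\bD_{n+1}$; in particular $f_n(x_n)\ge f_n(z_0)$ and $f_n(x_n)\ge f_n(z_1)$ for any neighboring pair $z_0,z_1$. Adding these two inequalities gives $f_n(z_0)+f_n(z_1)\le 2f_n(x_n)$. It then suffices to show $f_n(y_n)=f_n(x_n)$, i.e. that the neighbor $y_n$ of a global maximizer is itself a global maximizer; but $y_n$ is \emph{by definition} a maximizer of $f_n$ over $\cN_{n+1}(x_n)$, and since $f_n$ restricted to the single affine interval between $x_n$ and $y_n$ is affine, the value at either endpoint cannot exceed $f_n(x_n)$ — wait, that only gives $f_n(y_n)\le f_n(x_n)$. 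To get equality, the cleaner route is: the edge condition combined with the fact that $f_n$ is affine on $[x_n\wedge y_n,\,x_n\vee y_n]$ and attains its max at $x_n$ forces the affine piece to be constant on that interval unless $y_n$ were strictly smaller — but $y_n$ being the \emph{maximal} neighbor and $x_n$ having \emph{some} neighbor in $\bD_{n+1}$ at which $f_n$ equals $f_n(x_n)$ (because the global max over $\bD_{n+1}$ is realized along a connected chain of affine pieces through $x_n$, or $x_n$ is isolated in which case one argues directly). I would formalize this by: among all $z\in\cM_n$, pick one, say $x_n$; its maximal neighbor $y_n$ satisfies $f_n(y_n)\ge f_n(x_n')$ for the other neighbor, and since the two affine pieces meeting at $x_n$ have slopes of opposite-or-zero sign at a maximum, $\max(f_n(x_n'),f_n(y_n))=f_n(x_n)$ forces $f_n(y_n)=f_n(x_n)$. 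Hence $f_n(x_n)+f_n(y_n)=2\max_{\bD_{n+1}}f_n\ge f_n(z_0)+f_n(z_1)$.

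For (b) $\Rightarrow$ (a): assume $f_n(z_0)+f_n(z_1)\le f_n(x_n)+f_n(y_n)$ for all neighboring $z_0,z_1$. Let $M:=\max_{\bD_{n+1}}f_n$ and pick $z^*\in\cM_n$ together with a neighbor $w^*$ of $z^*$ with $f_n(w^*)=M$ (which exists by the argument above). Then $2M=f_n(z^*)+f_n(w^*)\le f_n(x_n)+f_n(y_n)\le M+M=2M$, so equality holds throughout, forcing $f_n(x_n)=f_n(y_n)=M$. In particular $x_n\in\bD_{n+1}\cap\argmax f_n=\cM_n$, and $y_n\in\cN_{n+1}(x_n)$ attains $M\ge f_n(s)$ for every $s\in\cN_{n+1}(x_n)$, so $y_n$ is a maximizer of $f_n$ in $\cN_{n+1}(x_n)$; thus $(x_n,y_n)$ is a maximizing edge (and by symmetry so is $(y_n,x_n)$). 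This closes the equivalence.

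The main obstacle I anticipate is the auxiliary fact used in both directions: \emph{a global maximizer of $f_n$ in $\bD_{n+1}$ always has a neighbor in $\bD_{n+1}$ where $f_n$ attains the same maximal value.} I would prove this by using that $f_n$ is continuous and piecewise affine with breakpoints exactly in $\bD_{n+1}$, so its graph near a maximizing breakpoint $x_n$ consists of two affine pieces with non-positive left derivative... no — with left slope $\ge 0$ and right slope $\le 0$; if \emph{both} are strictly signed then $f_n<M$ at both neighbors, but then $f_n<M$ on a neighborhood of $x_n$, and repeating at every point of $\cM_n$ shows $\argmax f_n$ is a finite union of isolated points, which is fine, but then no two maximizers are neighbors — and the lemma would be vacuous unless $x_n=y_n$, contradicting "neighboring points." The resolution: when $x_n$ is an isolated maximizer, the maximal-neighbor $y_n$ satisfies $f_n(y_n)<M$, yet the edge $(x_n,y_n)$ can still be a maximizing edge, and (b) must then be checked directly — so I would \emph{not} claim $f_n(y_n)=M$ in general but instead, in the (b)$\Rightarrow$(a) direction, run the comparison argument with the \emph{specific} pair $(z^*,w^*)$ where $z^*\in\cM_n$ has $w^*$ its \emph{maximal} neighbor, and show $f_n(z^*)+f_n(w^*)$ equals the common value $f_n(x_n)+f_n(y_n)$, from which $x_n\in\cM_n$ follows because $f_n(x_n)\ge \tfrac12(f_n(x_n)+f_n(y_n))=\tfrac12(f_n(z^*)+f_n(w^*))\ge\cdots$ — this needs $f_n(x_n)\ge f_n(y_n)$ or a symmetrization. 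I expect the bookkeeping around which of $x_n,y_n$ is the larger, and the endpoint cases $x_n\in\{0,1\}$, to be the fiddly part; everything else is the clean two-line comparison $2M\le f_n(x_n)+f_n(y_n)\le 2M$.
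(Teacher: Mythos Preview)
Your argument has a genuine gap: both directions rely on the auxiliary claim that a global maximizer $x_n\in\cM_n$ has a neighbor in $\bD_{n+1}$ at which $f_n$ attains the same maximal value $M$. This is false. Already for $n=0$ with $c_0>0$ one has $\cM_0=\{1/2\}$ with $f_0(1/2)=c_0/2$, while both neighbors $0,1$ satisfy $f_0(0)=f_0(1)=0$. You noticed this near the end, but the fix you propose for (b)$\Rightarrow$(a) still needs a neighboring pair $(z^*,w^*)$ with $f_n(z^*)+f_n(w^*)=2M$, which need not exist; and your (a)$\Rightarrow$(b) argument never recovers from the collapse of $f_n(y_n)=M$.

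More fundamentally, the equivalence is \emph{not} a statement about arbitrary piecewise-affine functions on $\bD_{n+1}$: for a generic such function one can arrange values like $0,\,1,\,0.5,\,0.9,\,0.8$ at five consecutive grid points, where the unique maximizing edge $(1/4,1/2)$ has sum $1.5$ but the pair $(3/4,1)$ has sum $1.7$, so (a)$\not\Rightarrow$(b). Hence any correct proof must use the specific recursive structure of $f_n$, namely that for $z_0,z_2$ consecutive in $\bD_n$ and $z_1=\tfrac12(z_0+z_2)$ one has
\[
f_n(z_1)=\tfrac12\big(f_{n-1}(z_0)+f_{n-1}(z_2)\big)+\tfrac{c_n}{2},\qquad f_n(z_0)=f_{n-1}(z_0).
\]
The paper exploits exactly this by induction on $n$: given a maximizing edge at level $n$, one peels off the last tent layer, produces a maximizing edge at level $n-1$, applies the induction hypothesis there, and reassembles. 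The case split is according to whether the first component $x_n$ lies in $\bD_n$ or in $\bD_{n+1}\setminus\bD_n$; this is where the real work sits, not in the endpoint bookkeeping you anticipated.
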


\begin{proof}We prove the assertion by induction on $n$. Consider the case $n=0$. If $c_0=0$, then $\cM_0=\bD_1$ and all pairs  of neighboring points in $\bD_1$ form maximizing edges of generation $0$, and so the assertion is obvious. 
If $c_0>0$, then $\cM_0=\{1/2\}$, and  if $c_0<0$, then $\cM_0=\{0,1\}$. Also in these cases the equivalence of (a) and (b) is obvious.

Now assume that $n\ge1$ and that the equivalence of (a) and (b)  has been established for all $m<n$. To show that (a) implies (b), let $(x_n,y_n)$ be a maximizing edge of generation $n$. First, we consider  the case $x_n\in\bD_{n+1}\setminus\bD_n$.  Then $\cN_{n+1}(x_n)$ contains $y_n$ and another point, say $u_n$, and both $y_n$ and $u_n$ belong to $\bD_n$. If $z_0$ and $z_2$ are two neighboring points in $\bD_n$, we let $z_1:=\frac12(z_0+z_2)$. Then
$$\frac12\big(f_{n-1}(z_0)+f_{n-1}(z_2)\big)+\frac{c_n}2=f_n(z_1)\le f_n(x_n)=\frac12\big(f_{n-1}(y_n)+f_{n-1}(u_n)\big)+\frac{c_n}2,
$$
and hence $f_{n-1}(z_0)+f_{n-1}(z_2)\le f_{n-1}(y_n)+f_{n-1}(u_n)$. The induction hypothesis now yields that $(y_n,u_n)$ or $(u_n,y_n)$ is a maximizing edge of generation $n-1$. Moreover, since $(x_n,y_n)$ is a maximizing edge of generation $n$, part (b) of Definition~\ref{maximizing edge def} gives $f_n(u_n)\le f_n(y_n)$. Since  both $y_n$ and $u_n$ belong to $\bD_n$, we get that 
\begin{equation}\label{key lemma eq 2}
f_{n-1}(u_n)=f_n(u_n)\le f_n(y_n)=f_{n-1}(y_n).
\end{equation}
Therefore, $y_n\in\cM_{n-1}$.

Now let  $z_0$ and $z_1$ be two neighboring points in $\bD_{n+1}$. Then one of the two, say $z_0$ belongs to $\bD_n$. Hence, the fact that $y_n\in\cM_{n-1}$ and $x_n\in\cM_n$ yields that 
 $$f_n(z_0)+f_n(z_1)=f_{n-1}(z_0)+f_n(z_1)\le f_{n-1}(y_n)+f_n(x_n)= f_{n}(y_n)+f_n(x_n).
 $$
 This establishes (b) in case $x_n\in\bD_{n+1}\setminus\bD_n$. 
 
 Now we consider the case in which $x_n\in\bD_n$. Then $f_{n-1}(x_n)=f_n(x_n)$, and so $x_n\in\cM_{n-1}$. Next, we let $y_{n-1}:=2y_n-x_n$. Then $y_{n-1}\in\bD_{n}$, and we claim that $(x_n,y_{n-1})$ is a maximizing edge of generation $n-1$. This is obvious if $x_n\in\{0,1\}$. Otherwise, we have $\cN_{n}(x_n)=\{y_{n-1},u_{n-1}\}$ for $u_{n-1}=2x_n-y_{n-1}=3x_n-2y_n$. Moreover, $\cN_{n+1}(x_n)=\{y_n,u_n\}$ for $u_n=\frac12(x_n+u_{n-1})$. Since $(x_n,y_n)$ is a maximizing edge of generation $n$, we must have $ f_n(y_n)\ge f_n(u_n)$ and hence
  \begin{align*}
\frac12\big(f_{n-1}(x_n)+f_{n-1}(y_{n-1})\big)+\frac{c_n}2=f_n(y_n)\ge f_n(u_n)=\frac12\big(f_{n-1}(x_n)+f_{n-1}(u_{n-1})\big)+\frac{c_n}2.
 \end{align*}
 Therefore, 
 \begin{equation}\label{key lemma eq 1}
 f_{n-1}( y_{n-1})\ge f_{n-1}(u_{n-1}),
 \end{equation}
  and it follows that $(x_n,y_{n-1})$ is indeed a maximizing edge of generation $n-1$.
 
Now let $z_0$ and $z_1$ be two neighboring points in $\bD_{n+1}$. Exactly one of these points, say $z_0$, belongs also to $\bD_n$. Let $z_2:=2z_1-z_0\in\bD_n$, so that $z_0$ and $z_2$ are neighboring points in $\bD_n$ and $z_1=\frac12(z_0+z_2)$. Hence, 
$f_n(z_1)=\frac12\big(f_{n-1}(z_0)+f_{n-1}(z_2)\big)+\frac{c_n}2$. Therefore, the fact that $ f_n(z_0)\le  f_n(x_n)$ and the induction hypothesis yield that
\begin{align*}
f_n(z_0)+f_n(z_1)&\le f_n(x_n)+ \frac12\big(f_{n-1}(x_{n})+f_{n-1}(y_{n-1})\big)+\frac{c_n}2= f_n(x_n)+ f_n(y_n).
\end{align*}
This completes the proof of (a)$\Rightarrow$(b).

Now we prove (b)$\Rightarrow$(a). To this end, let $x_n$ and $y_n$ be two fixed neighboring points in $\bD_{n+1}$ such that (b) is satisfied. Without loss of generality, we may suppose that $f_n(x_n)\ge f_n(y_n)$. Clearly,  $y_n$ must be a maximizer of $f_n$ in $\cN_{n+1}(x_n)$. To conclude (a), it will thus be sufficient to show that $x_n\in\cM_n$. To this end, we first consider the case $x_n\in\bD_n$. In a first step, we claim that $x_n\in\cM_{n-1}$. To this end, we assume by way of contradiction that there is $z_0\in\cM_{n-1}$ such that $f_{n-1}(z_0)>f_{n-1}(x_n)=f_n(x_n)$. Then we take $z_2\in\bD_n$ such that $(z_0,z_2)$ is a maximizing edge of generation $n-1$ and define $z_1:=\frac12(z_0+z_2)$ and $y_{n-1}:=2y_n-x_n\in\bD_n$. Using our assumption (b) yields that 
\begin{align*}
\lefteqn{f_{n-1}(x_n)+\frac12\big(f_{n-1}(x_n)+f_{n-1}(y_{n-1})\big)+\frac{c_n}2}\\
&=f_n(x_n)+f_n(y_n)\ge f_n(z_0)+f_n(z_1)\\
&=f_{n-1}(z_0)+\frac12\big(f_{n-1}(z_0)+f_{n-1}(z_2)\big)+\frac{c_n}2\\
&>f_{n-1}(x_n)+\frac12\big(f_{n-1}(z_0)+f_{n-1}(z_2)\big)+\frac{c_n}2.
\end{align*}
Hence, $f_{n-1}(x_n)+f_{n-1}(y_{n-1})>f_{n-1}(z_0)+f_{n-1}(z_2)$, in contradiction to  our assumption that $(z_0,z_2)$ is a maximizing edge of generation $n-1$ and the induction hypothesis. Therefore we must have $x_n\in\cM_{n-1}$.

In the next step, we show that $f_n(x_n)\ge f_n(z)$ for all $z\in\bD_{n+1}\setminus\bD_n$. Together with the preceding step, this will  give $x_n\in\cM_n$. To this end, let $z_1\in\bD_{n+1}\setminus\bD_n$ be given, and let $z_0$ and $z_1$ be the two neighbors of $z_1$ in $\bD_{n+1}$. Then $z_0,z_2\in\bD_n$ and $z_1=\frac12(z_0+z_2)$. As discussed above, $y_n$ is a maximizer of $f_n$ in $\cN_{n+1}(x_n)$. Thus, it is easy to see that $y_{n-1}:=2y_n-x_n$ must be a maximizer of $f_{n-1}$ in $\cN_{n}(x_n)$. Since we already know that $x_n\in\cM_{n-1}$, the induction hypothesis yields  that $(x_n,y_{n-1})$ is a maximizing edge of generation $n-1$. Thus,
\begin{align*}
f_n(z_1)=\frac12\big(f_{n-1}(z_0)+f_{n-1}(z_2)\big)+\frac{c_n}2\le \frac12\big(f_{n-1}(x_n)+f_{n-1}(y_{n-1})\big)+\frac{c_n}2=f_n(y_n)\le f_n(x_n).
\end{align*}
This concludes the proof of (b)$\Rightarrow$(a) in case $x_n\in\bD_n$. 

Now we consider the case in which $x_n\in\bD_{n+1}\setminus\bD_n$. In a first step, we show that $y_n\in\cM_{n-1}$. To this end, we assume by way of contradiction that there is $z_0\in\cM_{n-1}$ such that $f_{n-1}(z_0)>f_{n-1}(y_n)$. Let $z_2\in\bD_n$ be such that $(z_0,z_2)$ is a maximizing edge of generation $n-1$ and put $z_1:=\frac12(z_0+z_2)$. We also put $u_n:=2x_n-y_n$. Then the induction hypothesis gives
\begin{align*}
f_n(z_0)+f_n(z_1)&=f_{n-1}(z_0)+\frac12\big(f_{n-1}(z_0)+f_{n-1}(z_2)\big)+\frac{c_n}2\\
&>f_{n-1}(y_n)+\frac12\big(f_{n-1}(y_n)+f_{n-1}(u_n)\big)+\frac{c_n}2\\
&=f_n(y_n)+f_n(x_n),
\end{align*}
in contradiction to our assumption (b). Thus, $y_n\in\cM_{n-1}$. 

Next, we show that $(y_n,u_n)$ is a maximizing edge of generation $n-1$. This is clear if either $y_n$ or $u_n$ belong to $\{0,1\}$. Otherwise, we must show that $f_{n-1}(u_n)\ge f_{n-1}(w_n)$, where $w_n=2y_n-u_n$. 
Let $z_n:=\frac12(w_n+y_n)$. Then our hypothesis (b) yields that $f_n(y_n)+f_n(x_n)\ge f_n(y_n)+f_n(z_n)$ and in turn $f_n(x_n)\ge f_n(z_n)$. It follows that
\begin{align*}
\frac12\big(f_{n-1}(y_n)+f_{n-1}(u_n)\big)+\frac{c_n}2=f_n(x_n)\ge f_n(z_n)=\frac12\big(f_{n-1}(y_n)+f_{n-1}(w_n)\big)+\frac{c_n}2,
\end{align*}
which implies the desired inequality $f_{n-1}(u_n)\ge f_{n-1}(w_n)$.

Now we can conclude our proof by showing that $x_n\in\cM_n$. If $z\in\bD_n$, then the fact that $y_n\in\cM_{n-1}$ gives 
$$f_n(x_n)\ge f_n(y_n)=f_{n-1}(y_n)\ge f_{n-1}(z)=f_n(z).$$
If $z\in\bD_{n+1}\setminus\bD_n$, we let $z_0$ and $z_2$ denote its two neighboring points in $\bD_{n+1}$, so that $z_0,z_2\in\bD_n$ and $z=\frac12(z_0+z_2)$. 
Then,
$$f_n(z)=\frac12\big(f_{n-1}(z_0)+f_{n-1}(z_2)\big)+\frac{c_n}2\le \frac12\big(f_{n-1}(y_n)+f_{n-1}(u_n)\big)+\frac{c_n}2=f_n(x_n),
$$
where we have used the induction hypothesis and the fact that $(y_n,u_n)$ is a maximizing edge of generation $n-1$.
\end{proof}

	 In the proof of the preceding lemma (see, in particular, \eqref{key lemma eq 2} and \eqref{key lemma eq 1}), we have en passant proved the following statement, which shows how to successively construct maximizing edges in a backward manner. 
	 
	 \begin{lemma}Suppose that  $(x_n,y_n)$ is a maximizing edge of generation $n\ge1$. Then:
	 \begin{enumerate}
	 \item If $x_n\in\bD_n$ and $y_{n-1}:=2y_n-x_n$, then $(x_n,y_{n-1})$ is a maximizing edge of generation $n-1$.
	 \item If $x_n\in\bD_{n+1}\setminus\bD_n$ and $u_n:=2x_n-y_n$, then  $(y_n,u_n)$ is a maximizing edge of generation $n-1$.
	 \end{enumerate}
	 \end{lemma}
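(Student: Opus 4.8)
The plan is to observe that both assertions are already present, essentially verbatim, in the proof of Lemma~\ref{key lemma} — see the computations around \eqref{key lemma eq 1} and \eqref{key lemma eq 2} — so the task is just to isolate them and assemble a short self-contained argument. The only tool I need is the pair of elementary relations between $f_{n-1}$ and $f_n=f_{n-1}+c_n\phi(2^n\cdot)$: on $\bD_n$ one has $\phi(2^n\cdot)=0$, hence $f_n=f_{n-1}$ there; and at any $m\in\bD_{n+1}\setminus\bD_n$, which is the midpoint of some $\bD_n$-interval $[a,b]$ on which $f_{n-1}$ is affine, one has $f_n(m)=\tfrac12\bigl(f_{n-1}(a)+f_{n-1}(b)\bigr)+\tfrac{c_n}{2}$.

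For part~(a) I would argue as follows. Since $x_n\in\cM_n$, we have $f_n(x_n)\ge f_n(z)$ for all $z\in\bD_n$, and because $f_n=f_{n-1}$ on $\bD_n$ this shows $f_{n-1}(x_n)=\max_{\bD_n}f_{n-1}$, which is the maximum of $f_{n-1}$ on $[0,1]$; thus $x_n\in\cM_{n-1}$. If $x_n\in\{0,1\}$, then $\cN_n(x_n)$ is a singleton and $(x_n,y_{n-1})$ is trivially a maximizing edge of generation $n-1$. Otherwise $\cN_n(x_n)=\{y_{n-1},u_{n-1}\}$, and the midpoints $y_n=\tfrac12(x_n+y_{n-1})$, $u_n=\tfrac12(x_n+u_{n-1})$ are precisely the two points of $\cN_{n+1}(x_n)$. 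As $(x_n,y_n)$ is a maximizing edge of generation $n$, the point $y_n$ maximizes $f_n$ on $\cN_{n+1}(x_n)$, so $f_n(y_n)\ge f_n(u_n)$; inserting the midpoint identity and cancelling the common summand $\tfrac12 f_{n-1}(x_n)+\tfrac{c_n}{2}$ gives $f_{n-1}(y_{n-1})\ge f_{n-1}(u_{n-1})$. Hence $y_{n-1}$ maximizes $f_{n-1}$ on $\cN_n(x_n)$, so $(x_n,y_{n-1})$ is a maximizing edge of generation $n-1$.

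For part~(b) I would proceed similarly but now invoke Lemma~\ref{key lemma} itself. Here $x_n\notin\{0,1\}$, $\cN_{n+1}(x_n)=\{y_n,u_n\}$ with $u_n=2x_n-y_n$, both $y_n,u_n\in\bD_n$, and $x_n$ is their midpoint. For any pair of neighbours $z,z'$ in $\bD_n$ with midpoint $z_1$, the inequality $f_n(z_1)\le f_n(x_n)$ (from $x_n\in\cM_n$), together with the midpoint identity applied to both sides, gives $f_{n-1}(z)+f_{n-1}(z')\le f_{n-1}(y_n)+f_{n-1}(u_n)$. This is exactly condition~(b) of Lemma~\ref{key lemma} at generation $n-1$ for the neighbouring pair $(y_n,u_n)$, so that lemma yields that $(y_n,u_n)$ or $(u_n,y_n)$ is a maximizing edge of generation $n-1$. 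To pin down the first possibility, note that $f_n(y_n)\ge f_n(u_n)$ (since $y_n$ maximizes $f_n$ on $\cN_{n+1}(x_n)$), hence $f_{n-1}(y_n)\ge f_{n-1}(u_n)$ because both lie in $\bD_n$; in either case this forces $y_n\in\cM_{n-1}$. Finally $u_n\in\cN_n(y_n)$, and $u_n$ maximizes $f_{n-1}$ on $\cN_n(y_n)$ — directly if $(y_n,u_n)$ is the maximizing edge, and otherwise because $u_n\in\cM_{n-1}$ makes $f_{n-1}(u_n)$ the maximum of $f_{n-1}$ over $\bD_n\supseteq\cN_n(y_n)$. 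Thus $(y_n,u_n)$ is a maximizing edge of generation $n-1$.

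The only non-mechanical step is the appeal to Lemma~\ref{key lemma} in part~(b): passing from the two-point inequality to the existence of a maximizing edge within $\{y_n,u_n\}$ cannot be done by hand, since a pointwise maximizer of $f_{n-1}$ on $\bD_n$ need not possess a near-maximal neighbour without the structural information encoded in that lemma. Everything else is bookkeeping with the midpoint identity, so I expect no real difficulty beyond extracting and citing the relevant pieces of the earlier proof.
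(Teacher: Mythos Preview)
Your proposal is correct and follows exactly the route the paper indicates: the paper gives no separate proof but simply remarks that the statement was proved \emph{en passant} in the proof of Lemma~\ref{key lemma}, pointing to \eqref{key lemma eq 1} and \eqref{key lemma eq 2}, and you have faithfully extracted and assembled precisely those computations. The only addition is the short final paragraph of your part~(b), which makes explicit why it is $(y_n,u_n)$ and not merely one of $(y_n,u_n)$, $(u_n,y_n)$ that is the maximizing edge---a step the paper leaves implicit.
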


	 We also have the following result, which shows how maximizing edges can be constructed in a forward manner. 
	 
	 \begin{lemma}\label{max edge lemme going up}For $n\in\bN_0$ let $(x_n,y_n)$ be a maximizing edge of generation $n$ and define $z_n:=\frac12(x_n+y_n)$. Then $(x_n,z_n)$ or $(z_n,x_n)$ is a maximizing edge of generation $n+1$.
	 \end{lemma}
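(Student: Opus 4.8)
The plan is to derive the lemma from the neighboring-pair characterization of maximizing edges supplied by Lemma~\ref{key lemma}, which turns the statement into a single inequality for the truncated function $f_{n+1}$ on $\bD_{n+2}$. First I would observe that $|x_n-z_n|=\tfrac12|x_n-y_n|=2^{-(n+2)}$, so $x_n$ and $z_n$ are neighboring points in $\bD_{n+2}$, with $x_n\in\bD_{n+1}$ and $z_n\in\bD_{n+2}\setminus\bD_{n+1}$. Applying Lemma~\ref{key lemma} in generation $n+1$ to the pair $x_n,z_n$, it then suffices to prove that
\[
f_{n+1}(w_0)+f_{n+1}(w_1)\le f_{n+1}(x_n)+f_{n+1}(z_n)
\]
for every pair of neighboring points $w_0,w_1\in\bD_{n+2}$.

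Next I would express $f_{n+1}$ on $\bD_{n+2}$ in terms of $f_n$ on $\bD_{n+1}$: for $w\in\bD_{n+1}$ one has $\phi(2^{n+1}w)=0$, hence $f_{n+1}(w)=f_n(w)$; and for $w\in\bD_{n+2}\setminus\bD_{n+1}$, writing $w=\tfrac12(w'+w'')$ with $w',w''$ the two neighbors of $w$ in $\bD_{n+1}$, the affinity of $f_n$ on $[w',w'']$ together with $\phi(2^{n+1}w)=\tfrac12$ gives $f_{n+1}(w)=\tfrac12\big(f_n(w')+f_n(w'')\big)+\tfrac{c_{n+1}}2$. Applying this to $z_n=\tfrac12(x_n+y_n)$ yields $f_{n+1}(x_n)+f_{n+1}(z_n)=\tfrac32 f_n(x_n)+\tfrac12 f_n(y_n)+\tfrac{c_{n+1}}2$. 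On the other hand, in a neighboring pair $w_0,w_1$ in $\bD_{n+2}$ exactly one point, say $w_0$, lies in $\bD_{n+1}$, and then $w_1=\tfrac12(w_0+w_2)$ where $w_2\in\bD_{n+1}$ and $w_0,w_2$ are neighbors in $\bD_{n+1}$, so $f_{n+1}(w_0)+f_{n+1}(w_1)=\tfrac32 f_n(w_0)+\tfrac12 f_n(w_2)+\tfrac{c_{n+1}}2$. Thus the inequality to be proved reduces to $3f_n(w_0)+f_n(w_2)\le 3f_n(x_n)+f_n(y_n)$.

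To finish, I would write $3f_n(w_0)+f_n(w_2)=2f_n(w_0)+\big(f_n(w_0)+f_n(w_2)\big)$ and use two consequences of the fact that $(x_n,y_n)$ is a maximizing edge of generation $n$: first, $f_n(w_0)\le f_n(x_n)$, because $x_n\in\cM_n$ maximizes $f_n$ over $\bD_{n+1}$; and second, $f_n(w_0)+f_n(w_2)\le f_n(x_n)+f_n(y_n)$, by the equivalence (a)$\Leftrightarrow$(b) of Lemma~\ref{key lemma} applied in generation $n$. Summing these bounds gives the required inequality, and Lemma~\ref{key lemma} in generation $n+1$ then upgrades it to the assertion that $(x_n,z_n)$ or $(z_n,x_n)$ is a maximizing edge of generation $n+1$. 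I do not expect a genuine obstacle here: the only points needing care are the elementary bookkeeping of which member of a neighboring pair in $\bD_{n+2}$ lies in $\bD_{n+1}$, and the remark that the degenerate cases $x_n\in\{0,1\}$ require no separate treatment since the characterization in Lemma~\ref{key lemma} is phrased uniformly over all neighboring pairs.
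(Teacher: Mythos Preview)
Your proof is correct. Both you and the paper rely on the same two facts---that $x_n\in\cM_n$ gives $f_n(w_0)\le f_n(x_n)$ for all $w_0\in\bD_{n+1}$, and that Lemma~\ref{key lemma}(b) at generation $n$ gives $f_n(w_0)+f_n(w_2)\le f_n(x_n)+f_n(y_n)$ for neighboring pairs---so the arguments are close in spirit. The difference is in how the conclusion is packaged: the paper verifies Definition~\ref{maximizing edge def} directly, first showing that $f_{n+1}(z_n)$ dominates $f_{n+1}$ on $\bD_{n+2}\setminus\bD_{n+1}$ and $f_{n+1}(x_n)$ dominates on $\bD_{n+1}$, and then checking the neighbor condition by a short case analysis on whether $f_{n+1}(x_n)\ge f_{n+1}(z_n)$ or vice versa. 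You instead invoke the implication (b)$\Rightarrow$(a) of Lemma~\ref{key lemma} at generation $n+1$, which reduces everything to the single inequality $3f_n(w_0)+f_n(w_2)\le 3f_n(x_n)+f_n(y_n)$ and avoids the case split altogether. Your packaging is arguably cleaner, and the observation that the boundary cases $x_n\in\{0,1\}$ require no special treatment under the pair characterization is a nice touch.
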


	 \begin{proof}Note that $f_{n+1}(z_n)=\frac12(f_n(x_n)+f_n(y_n))+c_{n+1}/2$. Hence, property (b) in Lemma~\ref{key lemma} yields  that $f_{n+1}(z_n)\ge f_{n+1}(z)$ for all $z\in\bD_{n+2}\setminus\bD_{n+1}$. Moreover, by assumption, $f_{n+1}(x_n)	=f_n(x_n)\ge f_n(z)=f_{n+1}(z)$ for all $z\in\bD_{n+1}$. Hence, $z_n\in\cM_{n+1}$ if $f_{n+1}(z_n) \ge f_{n+1}(x_n)$ and $x_n\in\cM_{n+1}$ if $f_{n+1}(x_n) \ge f_{n+1}(z_n)$. From here, the assertion follows easily.\end{proof}

The next proposition states in particular, that $t$ is a maximizer of $f$ if and only if it is a limit of successive maximizers of $f_n$. Clearly, the \lq\lq if" direction of this statement is obvious, while the \lq\lq only if" direction is not.

\begin{proposition}\label{max convergence prop}For given $t\in[0,1]$, the following statements are equivalent.
\begin{enumerate}
\item $t\in\argmax f$.
\item There exists a sequence $(t_n)_{n\in\bN_0}$ such that $t_n\in\cM_n$ for all $n$ and $\lim_nt_n=t$.
\item For $n\in\bN_0$, let $E_n$ be the union of all intervals $[x,y]$ such that $x,t\in\bD_{n+1}$,  $x<y$ and $(x,y)$ or $(y,x)$ is a maximizing edge of generation $n$. Then 
$$t\in\bigcap_{n=0}^\infty E_n.
$$
\end{enumerate}
\end{proposition}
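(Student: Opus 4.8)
The plan is to prove the cycle (c) $\Rightarrow$ (b) $\Rightarrow$ (a) together with (a) $\Rightarrow$ (c), the first two implications being routine and the last one carrying all the content. For (c) $\Rightarrow$ (b): if $t\in\bigcap_nE_n$, then for each $n$ the point $t$ lies in some interval $[x,y]$ belonging to a maximizing edge of generation $n$; by Definition~\ref{maximizing edge def} one endpoint of this interval, call it $t_n$, belongs to $\cM_n$, and $|t_n-t|\le 2^{-(n+1)}$, so $t_n\to t$. For (b) $\Rightarrow$ (a): since $|\phi|\le\tfrac12$ one has $\|f-f_n\|_\infty\le\tfrac12\sum_{m>n}|c_m|\to0$, hence $\max f_n\to\max f$; if $t_n\in\cM_n$ and $t_n\to t$, then $f(t)=\lim_nf(t_n)=\lim_nf_n(t_n)=\lim_n\max f_n=\max f$, using continuity of $f$ and the fact that $f_n$ attains its maximum on $\bD_{n+1}$.

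For (a) $\Rightarrow$ (c) I would fix $n$ and a maximizer $t$ of $f$, and show $t\in E_n$. The engine is a self-similar decomposition: for $a\in\bD_{n+1}$ with $a+2^{-(n+1)}\le1$ and $u\in[0,1]$, the periodicity and evenness of $\phi$ give
$$f\big(a+u2^{-(n+1)}\big)=(1-u)f_n(a)+u\,f_n\big(a+2^{-(n+1)}\big)+h_n(u),\qquad h_n(u):=\sum_{k\ge0}c_{n+1+k}\phi(2^ku),$$
where $h_n$ is independent of $a$ and symmetric, $h_n(u)=h_n(1-u)$. Consequently the maximum of $f$ over $[a,a+2^{-(n+1)}]$ equals $\psi_n\big(f_n(a),f_n(a+2^{-(n+1)})\big)$ with $\psi_n(p,q):=\max_{u\in[0,1]}[(1-u)p+uq+h_n(u)]$; writing $(1-u)p+uq=\tfrac12(p+q)+(\tfrac12-u)(p-q)$ and using the symmetry of $h_n$ one obtains $\psi_n(p,q)=\tfrac12(p+q)+\chi_n\big(\tfrac12|p-q|\big)$ with $\chi_n(\delta):=\max_u[(1-2u)\delta+h_n(u)]$ an even, convex, $1$-Lipschitz function, non-decreasing on $[0,\infty)$. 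The upshot is that if $t$ lies in the generation-$(n+1)$ interval $[a,a']$, then $\psi_n(f_n(a),f_n(a'))=\max f$.

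The heart of the argument is then the following estimate. Put $m_n:=\max f_n$ and $M:=\max\{f_n(z)+f_n(w):z,w\text{ neighbouring in }\bD_{n+1}\}$, and pick $z^*\in\cM_n$ with better neighbour $w^*$, so that $(z^*,w^*)$ is a maximizing edge and $f_n(z^*)=m_n$, $f_n(w^*)=M-m_n$ by Lemma~\ref{key lemma}. Since $f_n(a),f_n(a')\le m_n$ we have $|f_n(a)-f_n(a')|\le 2m_n-(f_n(a)+f_n(a'))$; since $\chi_n$ is non-decreasing on $[0,\infty)$ and $\sigma\mapsto\tfrac\sigma2+\chi_n(m_n-\tfrac\sigma2)$ is non-decreasing (because $\chi_n$ is $1$-Lipschitz), it follows that $\psi_n(f_n(a),f_n(a'))\le\psi_n(m_n,M-m_n)\le\max f$, so equality holds throughout. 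If $f_n(a)+f_n(a')=M$, then $[a,a']$ is a maximizing edge interval by Lemma~\ref{key lemma} and $t\in E_n$. Otherwise equality forces $|f_n(a)-f_n(a')|=2m_n-(f_n(a)+f_n(a'))$, so one endpoint, say $a$, has $f_n(a)=m_n$; then $a\in\cM_n$, its better neighbour $\bar a\ne a'$ satisfies $f_n(\bar a)=M-m_n>f_n(a')$, and hence $d:=m_n-f_n(a')>2m_n-M\ge0$. Comparing the profiles of $f$ on $[a,a']$ and on $[a,\bar a]$ — both of the form $m_n-u\cdot(\text{coefficient})+h_n(u)$ with the coefficient strictly larger, namely $d$, on $[a,a']$, while the two profiles have the same maximum $\max f$ — one sees that the maximum of $f$ over $[a,a']$ is attained only at $u=0$, i.e.\ at $a$; hence $t=a\in\cM_n$ lies in the maximizing edge interval with endpoints $a$ and $\bar a$, so again $t\in E_n$. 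Since $n$ was arbitrary, $t\in\bigcap_nE_n$; in fact this shows $\argmax f=\bigcap_nE_n$.

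I expect the last paragraph — showing that a maximizer of $f$ always falls inside a maximizing edge interval, in particular the monotonicity/convexity bookkeeping for $\psi_n$ and $\chi_n$ and the boundary case in which $\chi_n$ has a stretch of slope $1$ — to be the main obstacle; the rest is routine. As a consistency check one may note that $E_{n+1}\subseteq E_n$ (which follows from the backward maximizing-edge lemma preceding Lemma~\ref{max edge lemme going up}) and that every generation-$n$ maximizing edge interval contains one of generation $n+1$ (Lemma~\ref{max edge lemme going up}), so $\bigcap_nE_n$ is automatically a nonempty compact set, in agreement with the conclusion that it equals $\argmax f$.
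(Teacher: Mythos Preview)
Your approach is correct and genuinely different from the paper's. For (a)$\Rightarrow$(c), the paper argues by contradiction using a reflection trick: take the smallest $n$ with $t\notin E_n$; then $t$ lies in some maximizing-edge interval of generation $n-1$ with endpoints $x_{n-1},y_{n-1}$, and by Lemma~\ref{max edge lemme going up} the half adjacent to $x_{n-1}$ lies in $E_n$, so $t$ is in the other half. Reflecting $t$ across the midpoint $z$ yields $s=2z-t$ with $f_n(s)>f_n(t)$ (since $f_n$ is affine on each half and $f_n(x_{n-1})>f_n(y_{n-1})$), while $\phi(2^ms)=\phi(2^mt)$ for $m>n$ by periodicity and evenness of $\phi$; hence $f(s)>f(t)$, contradicting maximality. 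This is shorter and avoids any analysis of the tail function. Your decomposition $f(a+u2^{-(n+1)})=(1-u)f_n(a)+uf_n(a')+h_n(u)$ and the convexity bookkeeping for $\psi_n,\chi_n$ give a more structural picture (in effect computing $\max_{[a,a']}f$ explicitly from the endpoint values of $f_n$), at the cost of more machinery.

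Regarding the point you flag as the main obstacle: your claim ``equality forces $|f_n(a)-f_n(a')|=2m_n-(f_n(a)+f_n(a'))$'' does hold, and here is why, using only the properties you already recorded. With $\sigma_0:=f_n(a)+f_n(a')<M$, equality in your second step says $\chi_n(m_n-\tfrac{\sigma_0}2)-\chi_n(m_n-\tfrac M2)=\tfrac{M-\sigma_0}2$, i.e.\ the increment of $\chi_n$ over the nondegenerate interval $[m_n-\tfrac M2,\,m_n-\tfrac{\sigma_0}2]$ equals its length. Since $\chi_n$ is $1$-Lipschitz, $\chi_n$ is therefore \emph{strictly} increasing (with slope~$1$) on that interval, and by convexity it remains strictly increasing for all larger arguments. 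Equality in your first step says $\chi_n(\tfrac12|f_n(a)-f_n(a')|)=\chi_n(m_n-\tfrac{\sigma_0}2)$ with $\tfrac12|f_n(a)-f_n(a')|\le m_n-\tfrac{\sigma_0}2$; if the inequality were strict, then either $\tfrac12|f_n(a)-f_n(a')|\ge m_n-\tfrac M2$ (impossible by strict monotonicity there) or $\tfrac12|f_n(a)-f_n(a')|<m_n-\tfrac M2$, whence $\chi_n(\tfrac12|f_n(a)-f_n(a')|)\le\chi_n(m_n-\tfrac M2)<\chi_n(m_n-\tfrac{\sigma_0}2)$, again a contradiction. So indeed one endpoint attains $m_n$, and the rest of your comparison of the two profiles $m_n-ud+h_n(u)$ and $m_n-ud'+h_n(u)$ goes through exactly as you wrote.
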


\begin{proof}To prove (a)$\Rightarrow$(c), we assume by way of contradiction that there is $n\in\bN_0$ such that $t\notin E_n$. Clearly,  we can take the smallest such $n$. Since   $E_0=[0,1]$, we must have $n\ge1$. Moreover, there must be   a maximizing edge of generation $n-1$, denoted $(x_{n-1},y_{n-1})$, such that $t$ belongs to the closed interval with endpoints $x_{n-1}$ and $y_{n-1}$. Let $z:=\frac12(x_{n-1}+y_{n-1})$. By Lemma~\ref{max edge lemme going up}, the closed interval with endpoints $x_{n-1}$ and $z$ is a subset of $E_n$. Hence, $t\neq z$ and $t$ must be contained in the half-open interval with endpoints $z$ and $y_{n-1}$.  Therefore, $t=\alpha y_{n-1}+(1-\alpha)z$ for some $\alpha\in(0,1]$. We define $s:=\alpha x_{n-1}+(1-\alpha)z=2z -t$.

Since the interval with endpoints $z$ and $y_{n-1}$  is not a subset of $E_n$, Lemma~\ref{key lemma} implies that $f_n(z)+f_n(y_{n-1})<f_n(z)+f_n(x_{n-1})$. As $f_n$ is affine on each the two respective intervals with endpoints $y_{n-1}, z$ and $z,x_{n-1}$, we thus get $f_n(s)>f_n(t)$. Moreover, the symmetry and periodicity of the tent map $\phi$ implies that $\phi(2^mt)=\phi(2^ms)$ for all $m> n$. Hence,
$$f(s)=f_n(s)+\sum_{m=n+1}^\infty c_m\phi(2^ms)>f_n( t)+\sum_{m=n+1}^\infty c_m\phi(2^m t)=f(t),
$$
which  contradicts  the assumed maximality of $t$. 

The implication (c)$\Rightarrow$(b) is obvious, because $|x_n-y_n|=2^{-(n+1)}$ whenever $(x_n,y_n)$ is a  maximizing edge of generation $n$. The implication (b)$\Rightarrow$(a) follows from the uniform convergence of $f_n$ to $f$. 
 \end{proof}

The following lemma expresses the slope of $f_n$ around a point $t\in[0,1]$ in terms of the Rademacher expansion of $t$. 

\begin{lemma}\label{slope lemma}For a given $\{-1,+1\}$-valued sequence $(\rho_m)_{m\in\bN_0}$ and $n\in\bN$ let
$$t_n:=\sum_{m=0}^n(1-\rho_m)2^{-(m+2)}.
$$
Then
$$\frac{f_n(y)-f_n(x)}{y-x}=\sum_{m=0}^n2^mc_m\rho_m\qquad\text{for all $x,y\in[t_n,t_n+2^{-(n+1)}]$ with $x\neq y$.}
$$
\end{lemma}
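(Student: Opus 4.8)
The plan is to prove Lemma~\ref{slope lemma} by induction on $n$, using the explicit local description of the tent map contributions. First I would fix the $\{-1,+1\}$-valued sequence $(\rho_m)$ and set $t_n$ as in the statement, so that $t_n$ is the left endpoint of a dyadic interval of generation $n+1$, namely $I_n:=[t_n,t_n+2^{-(n+1)}]$. The key observation is that $f_n=\sum_{m=0}^n c_m\phi(2^m\,\cdot)$ is affine on $I_n$, because each $\phi(2^m\cdot)$ with $m\le n$ is affine on every dyadic interval of generation $m+1\le n+1$, hence in particular on $I_n$. Therefore the difference quotient $(f_n(y)-f_n(x))/(y-x)$ is constant on $I_n$ and equals the common slope of $f_n$ there; it suffices to compute that slope.

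For the base case $n=0$, we have $t_0\in\{0,1/2\}$ according to $\rho_0=\pm1$, and $\phi$ has slope $+1$ on $[0,1/2]$ and slope $-1$ on $[1/2,1]$; in both cases the slope of $c_0\phi$ on $I_0$ equals $c_0\rho_0=2^0c_0\rho_0$, matching the claimed formula. For the inductive step, suppose the formula holds for $n-1$. Write $f_n=f_{n-1}+c_n\phi(2^n\cdot)$. Since $I_n=[t_n,t_n+2^{-(n+1)}]\subset[t_{n-1},t_{n-1}+2^{-n}]=I_{n-1}$ (as $t_n-t_{n-1}=(1-\rho_n)2^{-(n+2)}\in\{0,2^{-(n+1)}\}$), the slope of $f_{n-1}$ on $I_n$ is, by the induction hypothesis, $\sum_{m=0}^{n-1}2^mc_m\rho_m$. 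It remains to show the slope of $c_n\phi(2^n\cdot)$ on $I_n$ is $2^nc_n\rho_n$. The function $\phi(2^n t)$ has slope $\pm 2^n$ on dyadic intervals of generation $n+1$, alternating; on the interval $[k2^{-(n+1)},(k+1)2^{-(n+1)}]$ it has slope $+2^n$ when $k$ is even and $-2^n$ when $k$ is odd. One then checks that the parity of the index $k$ of the interval $I_n$ is governed precisely by $\rho_n$: writing $t_n=t_{n-1}+(1-\rho_n)2^{-(n+2)}$ and using that $t_{n-1}$ is a multiple of $2^{-n}$, i.e.\ an \emph{even} multiple of $2^{-(n+1)}$, we find $I_n$ is the $(2j)$-th generation-$(n+1)$ interval if $\rho_n=+1$ (slope $+2^n$) and the $(2j+1)$-th one if $\rho_n=-1$ (slope $-2^n$), so the slope is $2^n\rho_n$ in both cases. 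Multiplying by $c_n$ and adding to the $f_{n-1}$ slope completes the induction.

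The main obstacle—though it is more a bookkeeping matter than a genuine difficulty—is the careful parity tracking in the inductive step: one must verify that the "offset" $(1-\rho_n)2^{-(n+2)}$ added to $t_{n-1}$ shifts $I_n$ within $I_{n-1}$ in exactly the way that flips the sign of the generation-$n$ slope contribution when $\rho_n=-1$ and keeps it when $\rho_n=+1$. This hinges on the elementary fact that $t_{n-1}\in\bD_n$, so $2^{n+1}t_{n-1}$ is an even integer, and hence the parity of $2^{n+1}t_n=2^{n+1}t_{n-1}+(1-\rho_n)/2$ is entirely determined by whether $(1-\rho_n)/2$ equals $0$ or $1$, i.e.\ by $\rho_n$. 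Once this sign is pinned down, everything else reduces to the linearity of $f_n$ on $I_n$ and a one-line addition, so no substantial estimation is needed.
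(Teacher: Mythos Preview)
Your proposal is correct and follows essentially the same approach as the paper's proof: both argue by induction on $n$, split $f_n=f_{n-1}+c_n\phi(2^n\cdot)$, use the inclusion $I_n\subset I_{n-1}$ to apply the induction hypothesis, and then determine the sign of the slope of $\phi(2^n\cdot)$ on $I_n$ from $\rho_n$. The only cosmetic difference is that the paper handles the last step by substituting $x=t_{n-1}+\xi 2^{-n}$, $y=t_{n-1}+\eta 2^{-n}$ and invoking the periodicity of $\phi$, whereas you phrase it as a parity check on the index $k=2^{n+1}t_n$; these are equivalent formulations of the same observation.
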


\begin{proof}We proceed by induction on $n$. For $n=0$, we have $f_0=c_0\phi$ and  $\rho_0=-1$ if and only if $t_0=1/2$; otherwise we have  $t_0=0$. Hence, the assertion is obvious.

Now assume that $n\ge1$, that the assertion has been established for all $m<n$, and that $x,y\in[t_n,t_n+2^{-(n+1)}]$ are given. Then $x$ and $y$ also belong to $[t_{n-1},t_{n-1}+2^{-n}]$, and so the induction hypothesis yields that
\begin{align}
\frac{f_n(y)-f_n(x)}{y-x}&=\frac{f_{n-1}(y)-f_{n-1}(x)}{y-x} +c_n\frac{\phi(2^ny)-\phi(2^nx)}{y-x} \nonumber\\
&
=\sum_{m=0}^{n-1}2^mc_m\rho_m+c_n\frac{\phi(2^ny)-\phi(2^nx)}{y-x}.\label{phi recursion eq}
\end{align} 
To deal with the rightmost term, we write $x=t_{n-1}+\xi2^{-n}$ and $y=t_{n-1}+\eta 2^{-n}$, where $\xi,\eta\in[0,1]$. More precisely, $\xi,\eta\in[0,1/2)$ if $\rho_n=1$ and $\xi,\eta\in[1/2,1]$ if $\rho_n=-1$. Then the rightmost term in \eqref{phi recursion eq} can be expressed as follows,
\begin{align*}
c_n\frac{\phi(2^ny)-\phi(2^nx)}{y-x}&=c_n\frac{\phi(2^nt_{n-1}+\eta)-\phi(2^nt_{n-1}+\xi)}{y-x}=2^nc_n\frac{\phi(\eta)-\phi(\xi)}{\eta-\xi},
\end{align*}
where we have used the periodicity of $\phi$ and the fact that $2^nt_{n-1}\in\bZ$. By our choice of $\xi$ and $\eta$, the rightmost term is equal to $2^nc_n\rho_n$, which in view of \eqref{phi recursion eq} concludes the proof.\end{proof}

We need one additional lemma for the proof of Theorem~\ref{general Takagi step cond thm}.

\begin{lemma}\label{step lemma}Suppose that $(\rho_m)_{m\in\bN_0}$ is a $\{-1,+1\}$-valued sequence  and $n\in\bN_0$. If \begin{equation}\label{finite n step cond eq}
\rho_{k}\sum_{m=0}^{k-1}2^mc_m\rho_m\le0\qquad\text{for all $k\le n$,}
\end{equation}
then there exists a maximizing edge $(x_n,y_n)$ of generation $n$ such that $t:=\sum_{m=0}^\infty(1-\rho_m)2^{-(m+2)}$ belongs to the closed interval with endpoints $x_n$ and $y_n$.\end{lemma}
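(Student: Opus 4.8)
The statement is proved by induction on $n$. The base case $n=0$ is immediate: the edge $(x_0,y_0)$ is determined by the sign of $c_0$, and $\rho_0$ carries no constraint (the condition \eqref{finite n step cond eq} is vacuous for $n=0$ since the sum over $m=0,\dots,-1$ is empty), so any dyadic point in $\bD_1$ lying in a maximizing edge works; one checks directly that $t\in[0,1/2]$ if $\rho_0=1$ and $t\in[1/2,1]$ if $\rho_0=-1$, and in either case $t$ lies in the closed interval of the maximizing edge $(1/2,0)$ or $(1/2,1)$ when $c_0\ge0$, or $(0,1)$ when $c_0\le0$.

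For the inductive step, assume the claim holds for $n-1$ and that \eqref{finite n step cond eq} holds for all $k\le n$. In particular it holds for all $k\le n-1$, so there is a maximizing edge $(x_{n-1},y_{n-1})$ of generation $n-1$ whose closed interval $I_{n-1}$ contains $t$. Set $z:=\frac12(x_{n-1}+y_{n-1})$, the midpoint; by Lemma~\ref{max edge lemme going up}, either $(x_{n-1},z)$ or $(z,x_{n-1})$ is a maximizing edge of generation $n$, i.e. the left half $[x_{n-1}\wedge z,\,x_{n-1}\vee z]$ is a maximizing-edge interval of generation $n$. The point $t$ lies in $I_{n-1}$, hence in one of the two halves determined by $z$. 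If $t$ lies in the half containing $x_{n-1}$, we are done: that half is the desired edge interval of generation $n$. If $t$ lies strictly in the half with endpoint $y_{n-1}$, we must instead show that $(z,y_{n-1})$ — equivalently $(y_{n-1},z)$ — is a maximizing edge of generation $n$; this is where the full strength of the step condition at level $k=n$ enters.

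To handle that second case, I would translate the step condition into a statement about slopes using Lemma~\ref{slope lemma}. Writing $t_n:=\sum_{m=0}^n(1-\rho_m)2^{-(m+2)}$, the truncated function $f_n$ has slope $\sum_{m=0}^n 2^mc_m\rho_m$ on the generation-$(n+1)$ interval $[t_n,t_n+2^{-(n+1)}]$, which is precisely the subinterval of $I_{n-1}$ containing $t$ on the appropriate side. The hypothesis $\rho_n\sum_{m=0}^{n-1}2^mc_m\rho_m\le0$ says that passing from the parent interval to the child indexed by $\rho_n$ does not make the value of $f_n$ at $t_n$ smaller than at the sibling child; concretely, $f_n$ evaluated at the endpoint of $I_{n-1}$ on the $\rho_n$-side is at least as large as at the midpoint $z$. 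Combined with $x_{n-1}\in\cM_{n-1}$ and the characterization in Lemma~\ref{key lemma}, this forces the $\rho_n$-half of $I_{n-1}$ to be a maximizing-edge interval of generation $n$. Since $t$ lies in exactly that half (it lies on the side of $z$ prescribed by $\rho_n$, because the digits $\rho_0,\dots,\rho_{n}$ of $t$ determine which generation-$(n+1)$ subinterval of $I_{n-1}$ contains $t$), the induction closes.

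**Main obstacle.** The delicate point is the bookkeeping that identifies \emph{which} half of $I_{n-1}$ contains $t$ with the half selected by the sign $\rho_n$, and then matching this against the sign information produced by Lemma~\ref{key lemma}: one has to verify that "$t$ on the $y_{n-1}$-side of $z$" together with the level-$n$ step inequality is exactly the configuration under which $(z,y_{n-1})$, rather than $(x_{n-1},z)$, is the maximizing edge — i.e. that the step condition at $k=n$ is precisely the inequality $f_n(\text{endpoint on }t\text{'s side})\ge f_n(z)$ needed to invoke Lemma~\ref{key lemma}. This requires carefully relating the orientation of the edge $(x_{n-1},y_{n-1})$ (which endpoint is $x$ and which is $y$) to the binary digit $\rho_{n-1}$, and then the new digit $\rho_n$ to the choice between the two children; the sign computation via Lemma~\ref{slope lemma} is the tool that makes this bookkeeping rigorous rather than a routine picture.
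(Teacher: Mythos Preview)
Your proposal is correct and uses the same ingredients as the paper: induction on $n$, Lemma~\ref{max edge lemme going up} to produce the maximizing edge $[x_{n-1},z]$ at the next generation, and Lemma~\ref{slope lemma} to connect the step condition to the geometry.

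The paper's organization is a bit cleaner than yours, and it dissolves precisely the ``main obstacle'' you flag. Rather than case-splitting on which half of $I_{n-1}$ contains $t$ and then, in the $y_{n-1}$-half, trying to prove that $(z,y_{n-1})$ is also a maximizing edge, the paper computes the slope
\[
\Delta_{n-1}=\frac{f_{n-1}(y_{n-1})-f_{n-1}(x_{n-1})}{y_{n-1}-x_{n-1}}=\sum_{m=0}^{n-1}2^mc_m\rho_m
\]
via Lemma~\ref{slope lemma} and reads off two facts simultaneously: (i) since $x_{n-1}\in\cM_{n-1}$ forces the numerator to be $\le0$, the sign of $\Delta_{n-1}$ pins down the orientation of the edge (e.g.\ $\Delta_{n-1}>0$ implies $x_{n-1}>y_{n-1}$); and (ii) the step inequality $\rho_n\Delta_{n-1}\le0$ then forces the digit $\rho_n$ to place $t$ in the half $[z,x_{n-1}]$ (or $[x_{n-1},z]$), which is \emph{always} the maximizing edge supplied by Lemma~\ref{max edge lemme going up}. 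Thus the case ``$t$ strictly in the $y_{n-1}$-half'' simply cannot occur unless $\Delta_{n-1}=0$, and in that degenerate case Lemma~\ref{key lemma} makes both halves maximizing edges. This short-circuits the bookkeeping you were bracing for: you never need to argue that the $y_{n-1}$-half is a maximizing edge in its own right.
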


\begin{proof}We will prove the assertion by induction on $n$. If $n=0$, the hypothesis is trivially satisfied since both intervals $[0,1/2]$ and $[1/2,1]$ have endpoints that form maximizing edges of generation~0. 

Now suppose that $n\ge1$ and that the assertion has been established for all $m<n$. Let $(x_{n-1},y_{n-1})$ be the maximizing edge  of generation $n-1$ that contains $t$. 
Lemma~\ref{slope lemma} gives that
\begin{equation}\label{Deltan-1eq}
\Delta_{n-1}:=\frac{f_{n-1}(y_{n-1})-f_{n-1}(x_{n-1})}{y_{n-1}-x_{n-1}}=\sum_{m=0}^{n-1}2^mc_m\rho_m .
\end{equation}
Let 
$z:=\frac12(x_{n-1}+y_{n-1})$. If $\Delta_{n-1}=0$ or $c_n=0$, then Lemmas~\ref{key lemma} and~\ref{max edge lemme going up} imply that  $x_{n-1},z$ and $y_{n-1},z$ are the endpoints of two respective maximizing edges of generation $n$, of which at least one must enclose $t$. 
If $\Delta_{n-1}>0$, then we must have $x_{n-1}>y_{n-1}$, because the numerator in \eqref{Deltan-1eq} is strictly negative. Moreover, \eqref{finite n step cond eq}
 implies that  $\rho_n=-1$, which means that $t$ lies in the interval $[z,x_{n-1}]$, whose endpoints form a maximizing edge of generation $n$ according to Lemma~\ref{max edge lemme going up}. An analogous reasoning gives $t\in[x_{n-1},z]$ if $\Delta_{n-1}<0$.
\end{proof}

\begin{proof}[Proof of Theorem~\ref{general Takagi step cond thm}] (a)$\Rightarrow$(b): Suppose that there exists a Rademacher expansion $(\rho_m)_{m\in\bN_0}$ of $t$ that does not satisfy the step condition. Then there exists  $n\in\bN_0$ such that $\rho_{n+1}\sum_{m=0}^{n}2^mc_m\rho_m>0$. Let us fix the smallest such $n$. Then \eqref{finite n step cond eq} holds, and  Lemma~\ref{step lemma}  yields a maximizing edge of generation $n$, denoted $(x_n,y_n)$, such that $t$ belongs to the closed interval with endpoints $x_n,y_n$. Suppose first that $\Delta_n:=\sum_{m=0}^{n}2^mc_m\rho_m>0$. Lemma~\ref{slope lemma} gives that
\begin{equation}\label{general Takagi step cond thm eq1}
0\ge f_n(y_n)-f_n(x_n)=\Delta_n\cdot(y_n-x_n)
\end{equation}
 and hence that $y_n<x_n$. Moreover, we must have strict inequality in \eqref{general Takagi step cond thm eq1}. 
 
 Let $z=\frac12(x_n+y_n)$ so that $y_n<z<x_n$. Lemma~\ref{max edge lemme going up}   yields that either $(z,x_n)$ or $(x_n,z)$ is a maximizing edge of generation $n+1$.  Therefore, and since   $f_n(y_n)<f_n(x_n)$, Lemma~\ref{key lemma} implies that neither $(y_n,z)$ nor $(z,y_n)$ is a maximizing edge of generation $n+1$. 
But the fact that $\rho_{n+1}=1$ requires that $t$ belongs to $[y_{n},z]$. Therefore, Proposition~\ref{max convergence prop} yields that $t\notin\argmax f$. An analogous argument applies in case $\Delta_n<0$. 

(b)$\Rightarrow$(c) is obvious, and (c)$\Rightarrow$(a) follows from Lemma~\ref{step lemma} and Proposition~\ref{max convergence prop}.\end{proof}

The proof of Corollary~\ref{smallest largest cor} will be based on the following simple lemma. We denote by $\cR_+$ the set of all $\{-1,+1\}$-valued sequences $\bm\rho$ that satisfy the step condition and  $\rho_0=+1$.

\begin{lemma}\label{sign lemma}Suppose that $\bm\rho^{(1)}$ and $\bm\rho^{(2)}$ are two distinct sequences in $\cR_+$. If $n_0$ denotes the smallest $n\in\bN$ such that $\rho^{(1)}_n\neq\rho_n^{(2)}$, then $\sum_{m=0}^{n_0-1}2^mc_m\rho^{(i)}_m=0$ for $i=1,2$. 
\end{lemma}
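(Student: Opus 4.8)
The plan is to exploit the fact that the two sequences agree on all indices strictly below $n_0$, so that the two partial sums appearing in the step conditions at level $n_0$ are literally the \emph{same} number, and then to play the two step-condition inequalities against each other using that the sequences differ in sign at index $n_0$.

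First I would record the trivial observation that since $\rho^{(1)}_m=\rho^{(2)}_m$ for every $m\in\{0,1,\dots,n_0-1\}$ (by minimality of $n_0$, and using $\rho^{(1)}_0=\rho^{(2)}_0=+1$), the quantity
$$
S:=\sum_{m=0}^{n_0-1}2^mc_m\rho^{(1)}_m=\sum_{m=0}^{n_0-1}2^mc_m\rho^{(2)}_m
$$
is well defined and independent of $i$. It therefore suffices to show $S=0$.

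Next I would apply the step condition (Definition~\ref{gen step condition def}) to each sequence at level $n=n_0$: for $\bm\rho^{(1)}$ this gives $\rho^{(1)}_{n_0}\,S\le 0$, and for $\bm\rho^{(2)}$ it gives $\rho^{(2)}_{n_0}\,S\le 0$. Since $\rho^{(1)}_{n_0},\rho^{(2)}_{n_0}\in\{-1,+1\}$ are distinct, we have $\rho^{(2)}_{n_0}=-\rho^{(1)}_{n_0}$, so the second inequality reads $-\rho^{(1)}_{n_0}\,S\le 0$, i.e.\ $\rho^{(1)}_{n_0}\,S\ge 0$. Combining with the first inequality forces $\rho^{(1)}_{n_0}\,S=0$, and since $\rho^{(1)}_{n_0}\ne 0$ we conclude $S=0$, which is exactly the claim for both $i=1$ and $i=2$.

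There is essentially no real obstacle here: the content is entirely in the bookkeeping observation that the partial sums coincide, after which the two step inequalities of opposite sign pin the common value to zero. The only thing to be careful about is the indexing convention (the step condition ranges over $n\in\bN$, so level $n_0\ge 1$ is legitimate because $n_0$, being the first index of disagreement past $\rho_0$, satisfies $n_0\ge 1$).
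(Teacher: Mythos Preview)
Your proof is correct and is essentially identical to the paper's own argument: both observe that the two partial sums up to $n_0-1$ coincide, apply the step condition at level $n_0$ to each sequence, and use $\rho^{(1)}_{n_0}=-\rho^{(2)}_{n_0}$ to force the common sum to vanish. The paper's version is just more terse.
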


\begin{proof} On the one hand, $\rho^{(1)}_m=\rho^{(2)}_m$ for $m<n_0$ and so 
$$\rho^{(1)}_{n_0}\sum_{m=0}^{n_0-1}2^mc_m\rho^{(1)}_m\le0\quad\text{and}\quad \rho^{(2)}_{n_0}\sum_{m=0}^{n_0-1}2^mc_m\rho^{(1)}_m\le0.
$$
On the other hand, $\rho^{(1)}_{n_0}=-\rho^{(2)}_{n_0}$. This proves the assertion. 
\end{proof}

\begin{proof}[Proof of Corollary~\ref{smallest largest cor}]Since both $\bm\rho^\flat$ and $\bm\rho^\sharp$ satisfy the step condition and since $\rho^\flat_0=\rho^\sharp_0=1$,  both $t^\flat:=\cT(\bm\rho^\flat)$  and $t^\sharp:=\cT(\bm\rho^\sharp)$ belong to $[0,1/2]\cap\argmax f$. Now suppose that there exists $t\in[0,1/2]\cap\argmax f$ with $t\neq t^\flat$. Let $\bm\rho$ be the standard Rademacher expansion for $t$ and take $n_0$ as in Lemma~\ref{sign lemma}. Then this lemma gives that  the first $n_0-1$ coefficients in the binary expansions of $t^\flat$ and $t$ coincide.  Moreover, the definition of $\bm\rho^\flat$ in \eqref{rho+- eq}
 yields that $\rho^\flat_{n_0}=-1$ and hence that $\rho_{n_0}=+1$. Therefore,  the $n_0^{\text{th}}$ coefficients in the binary expansions of $t^\flat$ and $t$ are given by  $1$  and $0$, respectively. Since $\bm\rho$ is the standard Rademacher expansion for $t$, the corresponding binary expansion of $t$ must contain infinitely many zeros, and so $t^\flat$ must be strictly larger than $ t$. The proof for $t^\sharp$ is analogous.\end{proof}

 \begin{proof}[Proof of Proposition~\ref{Takagi class number of max prop}]
 First, we will consider the case $|\cZ|<\infty$
  and proceed by induction on $n:=|\cZ|$. If $n=0$, then Lemma~\ref{sign lemma}  implies that $\bm\rho^\sharp$ is the only sequence in $\cR_+$. 
 Now suppose that $n\ge1$ and that the assertion has been established for all $m< n$. We let $n_0:=\min\cZ$.  If $\bm\rho$ is any sequence in $\cR_+$, then $\rho_k=\rho_k^\sharp$ for all $k\le n_0$. Hence, for any $n> n_0$,
 \begin{align}\label{cR card lemma eq1}
 \sum_{m=0}^n2^mc_m\rho_m=\sum_{m=0}^{n_0}2^mc_m\rho^\sharp_m+\sum_{m=n_0+1}^n2^mc_m\rho_m=\sum_{m=0}^{n-n_0-1}2^m\wt c_m\wt\rho_m,
 \end{align}
 where $\wt c_m=2^{n_0}c_{m+n_0+1}$ and $\wt\rho_m=\rho_{m+n_0+1}$. It follows in particular that $\bm{\wt\rho}$ satisfies the step condition for $(\wt c_m)_{m\in\bN_0}$. 
 
 Next, we define $\wt\rho^\sharp_m:=\rho^\sharp_{m+n_0+1}$ and observe that $\wt\rho^\sharp_0=+1$. Moreover, \eqref{cR card lemma eq1} implies that $\bm{\wt\rho}^\sharp$  is indeed the $\sharp$-sequence for $(\wt c_m)_{m\in\bN_0}$. Let
 $$\wt \cZ:=\Big\{n\in\bN_0\,\Big|\,\sum_{m=0}^n2^m\wt c_m\wt \rho_m^\sharp=0\Big\}
$$
and denote by  $\wt\cR_+$ the class of all $\{-1,+1\}$-valued sequences $\wt{\bm\rho}$ with $\wt\rho_0=+1$. Then $|\wt\cZ|=n-1$, and the induction hypothesis implies that $|\wt\cR_+|=2^{n-1}$. 
 The set $\wt\cR_+$ corresponds to all sequences  $\bm\rho\in\cR_+$ that satisfy $\rho_{n_0}=+1$. Now let us introduce the set $\wt \cR_-$ of all sequences $\bm{\wt\rho}$  with $\wt\rho_0=-1$ that satisfy the step condition for  $(\wt c_m)_{m\in\bN_0}$. Then $|\cR_+|=|\wt\cR_+|+|\wt\cR_-|$. But it is clear that we must have $|\wt\cR_-|=|\wt\cR_+|$, because if $\bm\rho$ satisfies the step condition, then so does $-\bm\rho$. This concludes the proof if $|\cZ|<\infty$.
 
Now consider the case $|\cZ|=\infty$. We write $\cZ\cup\{0\}=\{n_0,n_1,\dots\}$. For every sequence $\bm\sigma\in\{-1,+1\}^{\bN_0}$ with $\sigma_0=+1$, we define a sequence $\bm\rho^{\bm\sigma}$ by
$\rho^{\bm\sigma}_m:=\sigma_i\rho_m^\flat$ if $n_i< m\le n_{i+1}$. One easily checks that $\bm\rho\in\cR_+$ and it is clear that $\bm\rho^{\bm\sigma}\neq\bm\rho^{\bm\eta}$ if  $\bm\eta$ is another sequence in $\{-1,+1\}^{\bN_0}$ with $\eta_0=+1$. Therefore, $\cZ$ has the cardinality of the continuum.\end{proof}

\section{Proofs for results in Section~\ref{TL section}}\label{TL proofs}

\begin{proof}[Proof of Proposition~\ref{continuity prop}] The equivalence of~\ref{continuity prop (a)}  and~\ref{continuity prop (b)} is obvious. In addition, it is easy to see that~\ref{continuity prop (b)} is equivalent to $\bm\rho^\flat(\alpha)=\bm\rho^\sharp(\alpha)$, which in turn is equivalent to~\ref{continuity prop (c)} by \eqref{rho+- eq bis}. 

Let us now show that~\ref{continuity prop (c)} implies~\ref{continuity prop (d)}. To this end, we first show by induction on $n$ that for every $n\in\bN_0$ there exists $\delta_n>0$ such that $\rho^\flat_k(\beta)=\rho^\sharp_k(\alpha)$ for $k\le n$ and $\beta\in(-2,2)$ with $|\alpha-\beta|<\delta_n$. This is obvious for $n=0$. If the assertion has been established for $n$, then 
\begin{equation*}
L_n^\sharp(\beta):=\sum_{m=0}^n\beta^m\rho^\sharp_m(\beta)=\sum_{m=0}^n\beta^m\rho^\sharp_m(\alpha)=:g_n(\beta)
\end{equation*}
for $|\beta-\alpha|<\delta_n$. Since $g_n$ is clearly continuous and $g_n(\alpha)\neq0$ by~\ref{continuity prop (c)}, there exists $\delta_{n+1}\in(0,\delta_n]$ such that $0<g_n(\beta)g_n(\alpha)$ for all $\beta$ with $|\beta-\alpha|<\delta_{n+1}$. But then we must also have $L_n^\sharp(\beta)L_n^\sharp(\alpha)>0$ for these $\beta$, and \eqref{rho+- eq bis} implies that $\rho_{n+1}^\sharp(\beta)=\rho_{n+1}^\sharp(\alpha)$.

To show the continuity of $\tau^\sharp$, we let $\eps>0$ be given and define $n:=\lfloor -\log_2\eps\rfloor$. Then the preceding step yields that for $|\beta-\alpha|<\delta_n$,
\begin{align*}
|\tau^\sharp(\beta)-\tau^\sharp(\alpha)|&=\bigg|\sum_{m=0}^\infty2^{-(m+2)}\big(\rho^\sharp_m(\alpha)-\rho^\sharp_m(\beta)\big)\bigg|\le \sum_{m=n+1}^\infty 2^{-(m+1)}=2^{-n-1}<\eps.
\end{align*}
The continuity of $\tau^\flat$ is proved in the same way.

Finally, we show that~\ref{continuity prop (d)} implies~\ref{continuity prop (a)}. To this end, let us assume that, e.g., $\tau^\sharp$ is not continuous at $\alpha$. Then there are two sequences $(\alpha_n)$ and $(\beta_n)$ in $(-2,2)$ such that $\alpha_n\to\alpha$ and $\beta_n\to\alpha$, but $t_0:=\lim_n\tau^\sharp(\alpha_n)\neq\lim_n\tau^\sharp(\beta_n)=:t_1$. Since $f_{\beta}(t)\to f_{\alpha}(t)$ uniformly in $t$ as $\beta\to\alpha$, it follows that $t_0$ and $t_1$ are both maximizers of $f_\alpha$. Hence,~\ref{continuity prop (a)} cannot hold.
\end{proof}

The following lemma uses a result from Moran~\cite{Moran} so as to determine the Hausdorff dimension of certain sets in $[0,1]$ that are defined in terms of the Rademacher expansions of their members. Of course, instead of the Rademacher expansion, we could have just as well used the binary expansion.

\begin{lemma}\label{Hausdorff lemma}For a given integer $n\ge2$ and $k=0,\dots, n-1$, let $\rho_k\in\{-1,1\}$ and $\rho_k^*=-\rho_k$. Let $C$ be the set of all numbers in $[0,1]$ that have a Rademacher expansion composed of successive blocks of the form $\rho_0,\rho_1,\cdots,\rho_{n-1}$ or $\rho^*_0,\rho^*_1,\cdots,\rho^*_{n-1}$. Then $C$ is a perfect set of Hausdorff dimension $1/n$ and  the $1/n$-dimensional Hausdorff measure of $C$ is finite and strictly positive.
\end{lemma}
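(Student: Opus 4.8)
The plan is to realize $C$ as the attractor of an iterated function system of similitudes and then invoke Moran's theorem (from \cite{Moran}), which gives both the Hausdorff dimension and the positivity and finiteness of the corresponding Hausdorff measure, provided the open set condition holds. Concretely, a Rademacher expansion composed of successive blocks of length $n$, each block equal to either $B:=(\rho_0,\dots,\rho_{n-1})$ or $B^*:=(\rho_0^*,\dots,\rho_{n-1}^*)$, corresponds via $\cT$ in \eqref{Psi eq} to a point whose binary expansion is obtained by concatenating blocks of binary digits: writing $\eps_k=\frac12(1-\rho_k)$ and $\eps_k^*=\frac12(1-\rho_k^*)=1-\eps_k$, the first block contributes a number $a:=\sum_{k=0}^{n-1}\eps_k2^{-(k+1)}\in[0,1)$ or $a^*:=\sum_{k=0}^{n-1}\eps_k^*2^{-(k+1)}=2^{-1}+\cdots+2^{-n}-a$, and every subsequent block is scaled by a further factor $2^{-n}$.

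Thus I would define the two maps $S_0,S_1:[0,1]\to[0,1]$ by
$$
S_0(x)=2^{-n}x+a,\qquad S_1(x)=2^{-n}x+a^*,
$$
each a similitude with ratio $r:=2^{-n}$, and check that $C=S_0(C)\cup S_1(C)$, i.e. $C$ is the self-similar set generated by $\{S_0,S_1\}$. The identity $C=S_0(C)\cup S_1(C)$ follows directly from the block structure of the Rademacher expansions: prepending the block $B$ (resp. $B^*$) to an admissible expansion is exactly the operation $S_0$ (resp. $S_1$) on the represented real number, and conversely every admissible expansion starts with $B$ or $B^*$. A minor point to handle carefully here is the non-uniqueness of binary/Rademacher expansions at dyadic rationals, but since the set of such ambiguous points is countable it does not affect the Hausdorff dimension, and the self-similarity identity holds as an identity of sets once one fixes, say, the standard Rademacher expansion.

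Next I would verify the open set condition: take $U=(0,1)$ (or the interior of a slightly larger interval containing $C$); then $S_0(U)$ and $S_1(U)$ are subintervals of $U$, and they are disjoint because $a\ne a^*$ (indeed $a+a^*=1-2^{-n}$, so $a=a^*$ would force $a=\frac12(1-2^{-n})$, which is impossible since one of $\rho_0,\rho_0^*$ equals $+1$, forcing the leading binary digit of exactly one of $a,a^*$ to be $0$; more simply, $a$ and $a^*$ differ in their first $n$ binary digits). With the open set condition in hand, Moran's theorem yields that the Hausdorff dimension $s$ of $C$ is the unique solution of $2r^s=1$, i.e. $2\cdot2^{-ns}=1$, giving $s=1/n$, and moreover that $0<\mathcal H^{1/n}(C)<\infty$. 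Finally, $C$ is perfect: it is closed as the attractor of a contractive IFS, and it has no isolated points because each point admits admissible expansions agreeing with it on arbitrarily long prefixes but differing afterwards (swap a far-out block $B\leftrightarrow B^*$), and these nearby points are distinct from the original as soon as $n\ge 2$ guarantees $a\ne a^*$ keeps the blocks genuinely distinct. The only real subtlety is the bookkeeping around dyadic ambiguities and confirming the open set condition cleanly; once those are dispatched, everything else is a direct application of Moran's theorem.
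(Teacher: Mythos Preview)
Your proposal is correct and takes essentially the same approach as the paper: both identify $C$ as the union of two copies of itself scaled by $2^{-n}$ (the paper calls them $C_1$ and $C_1^*$, you write them as $S_0(C)$ and $S_1(C)$) and then invoke Moran's theorem to obtain the Hausdorff dimension $1/n$ and the positivity and finiteness of the $1/n$-dimensional Hausdorff measure. Your version is simply more explicit about the IFS formulation and the open set condition, while the paper's proof states the self-similarity in one sentence and cites \cite[Theorem~II]{Moran} directly; note also that your worry about dyadic ambiguities is unnecessary, since the identity $C=S_0(C)\cup S_1(C)$ holds exactly as an identity of sets (prepending a block to an admissible expansion and applying $\cT$ is literally the map $S_i$), so no countable exceptional set needs to be excised.
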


\begin{proof}It is clear that $C$ is closed and that every point  $t\in C$ is the limit of some sequence in $C\setminus\{t\}$. Therefore, $C$ is perfect.

Next, $C$ is the disjoint union of the two sets $C_1$ and $C_1^*$ that consist of all numbers $t\in C$ that have a Rademacher expansion whose first $n$ digits are formed by the blocks $\rho_0,\rho_1,\cdots,\rho_{n-1}$ and $\rho^*_0,\rho^*_1,\cdots,\rho^*_{n-1}$, respectively. Clearly, the two sets $C_1$ and $C_1^*$ are similar geometrically to $C$ but reduced in size by a factor $2^{-n}$. It therefore follows from~\cite[Theorem II]{Moran} that $C$ has Hausdorff dimension $\log 2/\log 2^n=1/n$ and that the $1/n$-dimensional Hausdorff measure of $C$ if finite and strictly positive. \end{proof}

\subsection{Proofs for the results in Section~\ref{alpha<1 results section}}

\begin{proof}[Proof of Lemma~\ref{Littlewood neg root lemma}] Note that
\begin{equation*}
p_{2n}(x)=1-\frac{x(1-x^{2n})}{1-x}=\frac{q_{2n}(x)}{1-x}
\end{equation*}
for $q_{2n}(x)=1-2x+x^{2n+1}$. On the one hand, if $x\le-2$, then $q_{2n}(x)=1+x(x^{2n}-2)\le-3$. On the other hand, for $x\in[-1,0)$, we have $q_{2n}(x)\ge-2x>0$. Therefore, all negative roots of $q_{2n}$, and equivalently of $p_{2n}$, must be contained in $(-2,-1)$. Next, 
\begin{equation*}
q'_{2n}(x)=-2+(2n+1)x^{2n}>0\qquad\text{for $x\in(-2,-1)$,}
\end{equation*}
 which together with $q_{2n}(-2)<0$ and $q_{2n}(-1)=2$ yields the existence of a unique negative root, which belongs to $(-2,-1)$. This observation  furthermore yields that for $x\in(-2,-1)$,
\begin{equation}\label{Littlewood neg root lemma p2n sign eq}
\text{$p_{2n}(x)<0$ for $x<x_n$ and $p_{2n}(x)>0$ for $x>x_n$.}
\end{equation} 
From here, we also get  $x_{n+1}>x_{n}$, because
$$q_{2n+2}(x_{n})=q_{2n}(x_{n})+x_{n}^{2n+3}-x_{n}^{2n+1}=x_{n}^{2n+3}-x_{n}^{2n+1}<0.
$$

Finally, we show that $\lim_nx_{n}=1$. To this end, we assume by way of contradiction that $x_\infty:=\lim_nx_{n}$ is strictly less than 1. Since $x_\infty>x_{n}$ for all $n$, \eqref{Littlewood neg root lemma p2n sign eq} gives
$$0\le \lim_{n\ua\infty}q_{2n}(x_\infty)=1-2x_\infty+\lim_{n\ua\infty}x_{\infty}^{2n+1}=-\infty,
$$
which is the desired contradiction.
\end{proof}

For the ease of notation, we define
\begin{equation*}
R^\flat_n(\alpha):=\sum_{m=0}^{n}\alpha^m\rho^\flat_m(\alpha)\qquad\text{and}\qquad R^\sharp_n(\alpha):=\sum_{m=0}^{n}\alpha^m\rho^\flat_m(\alpha).
\end{equation*}

\begin{lemma}\label{alpha negative first rho entries lemma}In the setting of Theorem~\ref{alpha negative thm}, we have for $\alpha\in(-2,-1)$ and $n\in\bN_0$,
\begin{align*}
\rho^\flat_1(\alpha)&=\cdots=\rho^\flat_{2n+1}(\alpha)=-1\qquad\text{for $\alpha\in[x_n,x_{n+1})$,}\\
\rho^\sharp_1(\alpha)&=\cdots=\rho^\sharp_{2n+1}(\alpha)=-1\qquad\text{for $\alpha\in(x_n,x_{n+1}]$.}
\end{align*}
Moreover, for $m< 2n$ we have $R_m^\flat(\alpha)> 0$, and we have $R_{2n}^\flat(\alpha)\ge0$, where equality holds if and only if $\alpha=x_n$.
\end{lemma}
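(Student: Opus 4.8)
The plan is to reduce everything to a statement about the signs of the truncated Littlewood polynomials $P_k(\alpha):=1-\alpha-\alpha^2-\cdots-\alpha^k$. The crucial elementary observation is that, as soon as we know $\rho^\flat_0(\alpha)=1$ together with $\rho^\flat_1(\alpha)=\cdots=\rho^\flat_k(\alpha)=-1$, the partial sum governing \eqref{rho+- eq bis} collapses to such a polynomial:
\[
R^\flat_k(\alpha)=\sum_{m=0}^k\alpha^m\rho^\flat_m(\alpha)=1-\alpha-\alpha^2-\cdots-\alpha^k=P_k(\alpha),
\]
and the analogous identity holds for $\bm\rho^\sharp(\alpha)$. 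Hence all three claims of the lemma will follow by a short induction once we have pinned down the sign of $P_k(\alpha)$ for $\alpha\in(-2,-1)$ and $k\le 2n$.

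For that step I would first record the identity $(1-x)P_k(x)=1-2x+x^{k+1}$. If $k$ is odd, then for $x\in(-2,-1)$ each of $1$, $-2x$, $x^{k+1}$ is positive, so $1-2x+x^{k+1}>0$, and since $1-x>0$ this gives $P_k(x)>0$ on $(-2,-1)$. If $k=2j$ is even with $j\ge1$, then $P_{2j}=p_{2j}$ is precisely the polynomial of Lemma~\ref{Littlewood neg root lemma}, and \eqref{Littlewood neg root lemma p2n sign eq} tells us that on $(-2,-1)$ the value $P_{2j}(\alpha)$ is negative for $\alpha<x_j$, zero for $\alpha=x_j$, and positive for $\alpha>x_j$. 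Combining this with the strict monotonicity $x_1<x_2<\cdots$ from Lemma~\ref{Littlewood neg root lemma}, we get: for $\alpha\in[x_n,x_{n+1})$ we have $P_{2j}(\alpha)\ge0$ for every $j\le n$, with equality only when $j=n$ and $\alpha=x_n$; and for $\alpha\in(x_n,x_{n+1}]$ we have $P_{2j}(\alpha)>0$ for every $j\le n$.

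Now I would run the induction. Fix $\alpha\in(-2,-1)\cap[x_n,x_{n+1})$, write $\bm\rho=\bm\rho^\flat(\alpha)$, and prove by induction on $k\in\{1,\dots,2n+1\}$ that $\rho_k=-1$. Assuming $\rho_1=\cdots=\rho_{k-1}=-1$, the partial sum in \eqref{rho+- eq bis} equals $P_{k-1}(\alpha)$ (with $P_0\equiv1$), which by the previous paragraph is $\ge0$: it is in fact $>0$ when $k-1=0$ or $k-1$ is odd, and when $k-1=2j$ with $1\le j\le n$ it is $\ge0$ since $\alpha\ge x_n\ge x_j$. So \eqref{rho+- eq bis} forces $\rho_k=-1$, completing the induction. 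For $\bm\rho^\sharp(\alpha)$ with $\alpha\in(-2,-1)\cap(x_n,x_{n+1}]$ the argument is identical except that \eqref{rho+- eq bis} now requires the \emph{strict} inequality $P_{k-1}(\alpha)>0$ to conclude $\rho^\sharp_k=-1$; this is exactly what the second half of the preceding paragraph supplies (using $\alpha>x_n\ge x_j$), and it is precisely why the admissible set for $\bm\rho^\sharp$ must exclude $x_n$. Finally, the statement about $R^\flat_m$ is immediate: for $\alpha\in[x_n,x_{n+1})$ the induction gives $R^\flat_m(\alpha)=P_m(\alpha)$ for all $m\le 2n$, and $P_m(\alpha)>0$ for $m<2n$ while $P_{2n}(\alpha)=p_{2n}(\alpha)\ge0$ with equality iff $\alpha=x_n$ by \eqref{Littlewood neg root lemma p2n sign eq} (the case $n=0$ being trivial, as then $P_0\equiv1$ and $\alpha\neq-2=x_0$).

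I do not expect a genuine obstacle here: the only care required is to keep track of the parity of the running index and to match strict versus non-strict inequalities both to the $\flat$/$\sharp$ conventions and to the open/closed endpoints of $[x_n,x_{n+1})$ and $(x_n,x_{n+1}]$. Once Lemma~\ref{Littlewood neg root lemma} and the identity $(1-x)P_k(x)=1-2x+x^{k+1}$ are in hand, the sign of every partial sum is forced and the induction is mechanical.
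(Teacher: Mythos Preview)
Your proof is correct and follows essentially the same route as the paper: both identify the partial sums $R^\flat_k(\alpha)$ with the truncated Littlewood polynomials $P_k(\alpha)=p_k(\alpha)$ and determine their signs via Lemma~\ref{Littlewood neg root lemma} and \eqref{Littlewood neg root lemma p2n sign eq}, then feed this into an induction along \eqref{rho+- eq bis}. The only cosmetic difference is how positivity at odd indices is obtained: the paper uses the recursion $R^\flat_{2m-1}(\alpha)=R^\flat_{2m-2}(\alpha)-\alpha^{2m-1}\ge-\alpha^{2m-1}>0$ via the induction hypothesis, whereas you read it off directly from the closed form $(1-x)P_k(x)=1-2x+x^{k+1}$.
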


\begin{proof}We prove only the result for $\bm\rho^\flat$; the proof for $\bm\rho^\sharp$ is analogous. To this end, we note first that $R^\flat_0(\alpha)=1$ so that $\rho^\flat_1(\alpha)=-1$. This settles the case $n=0$. For arbitrary $n\in\bN$,  we now show by induction on $m\in\{1,\dots, n\}$ that $R^\flat_{2m-1}(\alpha)>0$ and $R^\flat_{2m}(\alpha)\ge0$ with equality if and only if $m=n$ and $\alpha=x_n$. Consider the case $m=1$. We have 
$R_1^\flat(\alpha)=1-\alpha>0$ and, hence, $R_2^\flat(\alpha)=p_2(\alpha)$, where $p_{2m}$ denotes the Littlewood polynomial introduced in Lemma~\ref{Littlewood neg root lemma}. Since $\alpha\ge x_{n}$ by assumption and $x_n\ge x_{1}$ by Lemma~\ref{Littlewood neg root lemma},  the observation \eqref{Littlewood neg root lemma p2n sign eq} gives $p_{2}(\alpha)\ge0$, with equality if and only if $n=1$ and $\alpha=x_1$.

If the assertion has been proved for all $k< m\le n$, then the induction hypothesis implies that $R^\flat_{2m-1}(\alpha)=R^\flat_{2m-2}(\alpha)-\alpha^{2m-1}\ge -\alpha^{2m-1} >0$. The induction hypothesis  implies moreover that $R^\flat_{2m}(\alpha)=p_{2m}(\alpha)$.  Since $\alpha\ge x_n$ by assumption and $x_n\ge x_{m}$ by Lemma~\ref{Littlewood neg root lemma},  the observation \eqref{Littlewood neg root lemma p2n sign eq} gives $p_{2m}(\alpha)\ge0$, with equality if and only if $m=n$ and $\alpha=x_n$. This completes the  proof. 
\end{proof}

\begin{lemma}\label{alpha negative sequence lemma} In the setting of Theorem~\ref{alpha negative thm}, we have  for  $m,n\in\bN_0$,
\begin{align*}
R^\flat_{2n+4m+1}>0,\ R^\flat_{2n+4m+2}<0,\ R^\flat_{2n+4m+3}<0,\ R^\flat_{2n+4m+4}>0\qquad\text{on $[x_n,x_{n+1})$,}\\
R^\sharp_{2n+4m+1}>0,\ R^\sharp_{2n+4m+2}<0,\ R^\sharp_{2n+4m+3}<0,\ R^\sharp_{2n+4m+4}>0\qquad\text{on $(x_n,x_{n+1}]$.}
\end{align*}
\end{lemma}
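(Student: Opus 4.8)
The plan is to prove the statement for $\bm\rho^\flat$ by induction on $m$, with the case $m=0$ serving as the base, and then to note that the argument for $\bm\rho^\sharp$ is identical after replacing the half-open intervals $[x_n,x_{n+1})$ by $(x_n,x_{n+1}]$. Throughout, I would fix $n\in\bN_0$ and work on the interval $I_n=[x_n,x_{n+1})$, using Lemma~\ref{alpha negative first rho entries lemma} as the starting point: on $I_n$ we already know $\rho^\flat_1=\cdots=\rho^\flat_{2n+1}=-1$ and that $R^\flat_{2n}(\alpha)\ge 0$, with $R^\flat_{2n}(x_n)=0$ only at $\alpha=x_n$. The key computational tool is that once we know a stretch of consecutive $\rho^\flat_k$, the increments $R^\flat_k-R^\flat_{k-1}=\alpha^k\rho^\flat_k$ are explicit, so each $R^\flat_k$ can be summed in closed form as a geometric-type series in $\alpha$; the signs of the resulting rational expressions on $(-2,-1)$ are what must be checked.

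For the base case $m=0$ I would argue as follows. On $I_n$, Lemma~\ref{alpha negative first rho entries lemma} gives $R^\flat_{2n}\ge0$, hence by \eqref{rho+- eq bis} $\rho^\flat_{2n+1}=-1$ and
$$R^\flat_{2n+1}=R^\flat_{2n}-\alpha^{2n+1}\ge -\alpha^{2n+1}>0,$$
since $\alpha<0$. Thus $R^\flat_{2n+1}>0$, which forces $\rho^\flat_{2n+2}=-1$, so $R^\flat_{2n+2}=R^\flat_{2n+1}-\alpha^{2n+2}$. Here I would use that $R^\flat_{2n+2}=p_{2n+2}(\alpha)$ is exactly the Littlewood polynomial of Lemma~\ref{Littlewood neg root lemma} (the coefficients match because all signs so far are $+1,-1,\dots,-1$), and since $\alpha<x_{n+1}$, \eqref{Littlewood neg root lemma p2n sign eq} applied with index $n+1$ gives $p_{2n+2}(\alpha)<0$, i.e. $R^\flat_{2n+2}<0$. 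Then $\rho^\flat_{2n+3}=+1$, so $R^\flat_{2n+3}=R^\flat_{2n+2}+\alpha^{2n+3}<0$ because $\alpha^{2n+3}<0$ and $R^\flat_{2n+2}<0$. Finally $\rho^\flat_{2n+4}=+1$ and $R^\flat_{2n+4}=R^\flat_{2n+3}+\alpha^{2n+4}$; here one needs $R^\flat_{2n+4}>0$, which amounts to checking $\alpha^{2n+4}>-R^\flat_{2n+3}=-R^\flat_{2n+2}-\alpha^{2n+3}$, i.e. a one-line inequality in $\alpha$ using $|\alpha|>1$ and the bound $R^\flat_{2n+2}=p_{2n+2}(\alpha)$ in closed form from Lemma~\ref{Littlewood neg root lemma}'s formula $p_{2n+2}(\alpha)=q_{2n+2}(\alpha)/(1-\alpha)$ with $q_{2n+2}(\alpha)=1-2\alpha+\alpha^{2n+3}$.

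For the inductive step, assume the four sign assertions hold for $m-1$, i.e. $R^\flat_{2n+4m-3}>0$, $R^\flat_{2n+4m-2}<0$, $R^\flat_{2n+4m-1}<0$, $R^\flat_{2n+4m}>0$ on $I_n$. The point is that the pattern is self-reproducing: $R^\flat_{2n+4m}>0$ forces $\rho^\flat_{2n+4m+1}=-1$, hence $R^\flat_{2n+4m+1}=R^\flat_{2n+4m}-\alpha^{2n+4m+1}>0$ exactly as before; this gives $\rho^\flat_{2n+4m+2}=-1$ and $R^\flat_{2n+4m+2}=R^\flat_{2n+4m+1}-\alpha^{2n+4m+2}$. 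The one real obstacle is establishing $R^\flat_{2n+4m+2}<0$: unlike the base case this is no longer literally a Littlewood polynomial evaluation but a partial sum whose sign must be controlled. I would handle it by carrying along, as a strengthened induction hypothesis, explicit two-sided bounds for the four $R^\flat$-values in each block — something like $R^\flat_{2n+4m}<|\alpha|^{2n+4m}$ and $R^\flat_{2n+4m+2}\in\big(-|\alpha|^{2n+4m+2},0\big)$ — which propagate cleanly under the recursion $R^\flat_k=R^\flat_{k-1}+\alpha^k\rho^\flat_k$ together with $|\alpha|>1$. Once $R^\flat_{2n+4m+2}<0$ is in hand, $\rho^\flat_{2n+4m+3}=+1$ and $R^\flat_{2n+4m+3}=R^\flat_{2n+4m+2}+\alpha^{2n+4m+3}<0$ (both summands negative), then $\rho^\flat_{2n+4m+4}=+1$ and the positivity $R^\flat_{2n+4m+4}=R^\flat_{2n+4m+3}+\alpha^{2n+4m+4}>0$ follows from the upper bound on $|R^\flat_{2n+4m+3}|$ and $|\alpha|^{2n+4m+4}>|\alpha|^{2n+4m+3}$. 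I expect essentially all the difficulty to be bookkeeping: choosing the auxiliary bounds so that they close up under the recursion, and verifying the handful of elementary inequalities in $\alpha\in(-2,-1)$ that appear at each of the four stages. The $\bm\rho^\sharp$ case is word-for-word the same, using the $\bm\rho^\sharp$ half of Lemma~\ref{alpha negative first rho entries lemma} and the convention $\rho^\sharp_k=+1$ when the relevant sum is $\le0$, which only matters at the endpoints $x_n$ and $x_{n+1}$ and accounts for the shift to $(x_n,x_{n+1}]$.
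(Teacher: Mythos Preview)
Your base case is correct and essentially matches the paper's. The inductive step, however, has a genuine gap: the bounds you propose do not close up for $|\alpha|$ close to $1$. Concretely, $R^\flat_{2n+4m+3}=R^\flat_{2n+4m+2}+\alpha^{2n+4m+3}$ is a sum of two \emph{negative} numbers, so $|R^\flat_{2n+4m+3}|>|\alpha|^{2n+4m+3}$ always; your claimed inequality ``the upper bound on $|R^\flat_{2n+4m+3}|$ and $|\alpha|^{2n+4m+4}>|\alpha|^{2n+4m+3}$'' therefore does not give $R^\flat_{2n+4m+4}>0$. Tracing the bounds you suggest, both the step $R^\flat_{2n+4m+2}<0$ and the step $R^\flat_{2n+4m+4}>0$ reduce to the inequality $|\alpha|^2-|\alpha|>1$, which fails precisely for $|\alpha|<(1+\sqrt5)/2=|x_1|$, i.e.\ for every $n\ge1$. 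So for $n\ge1$ your scheme as written does not go through, and this is not mere bookkeeping.

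The paper's proof avoids auxiliary bounds altogether by comparing across blocks rather than step by step: it writes, for each $j\in\{1,2,3,4\}$,
\[
R^\flat_{2n+4m+j}=R^\flat_{2n+4(m-1)+j}\pm\alpha^{k}(1+\alpha-\alpha^2-\alpha^3)
\]
for the appropriate $k$ and sign. Since $1+\alpha-\alpha^2-\alpha^3=(1+\alpha)^2(1-\alpha)>0$ on $(-2,-1)$ and the power of $\alpha$ has a fixed sign, the increment has a definite sign and the desired inequality simply propagates from the previous block. Your approach can in fact be rescued with a sharper propagating bound (e.g.\ $0<R^\flat_{2n+4m}<\dfrac{|\alpha|-1}{|\alpha|}\,|\alpha|^{2n+4m}$ for $m\ge1$, which follows from $R^\flat_{2n+4(m-1)+2}<0$ and is strong enough because $\dfrac{|\alpha|-1}{|\alpha|}<|\alpha|^2-|\alpha|$), but once you work that out you will see you are recovering exactly the four-term factorization the paper uses.
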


\begin{proof}We prove only the result for $R^\flat$; the proof for $R^\sharp$ is analogous.
We fix $n\in\bN_0$ and $\alpha\in [x_n,x_{n+1})$ (and $\alpha>x_0=-2$ for $n=0$) and proceed by induction on $m$. For $m=0$ we get from Lemma~\ref{alpha negative first rho entries lemma} and \eqref{Littlewood neg root lemma p2n sign eq} that $R^\flat_{2n+1}(\alpha)=p_{2n}(\alpha)-\alpha^{2n+1}>p_{2n}(\alpha)\ge0 $. Therefore $\rho^\flat_{2n+2}(\alpha)=-1$ and so  $R^\flat_{2n+2}(\alpha)=p_{2n+2}(\alpha)<0$ by Lemma~\ref{Littlewood neg root lemma}. In turn, we get $\rho^\flat_{2n+3}(\alpha)=+1$ and so $R^\flat_{2n+3}(\alpha)=R^\flat_{2n+2}(\alpha)+\alpha^{2n+3}<0$. Therefore, $\rho^\flat_{2n+4}(\alpha)=+1$ and, finally, 
\begin{align*}
R^\flat_{2n+4}(\alpha)&=R^\flat_{2n}(\alpha)-\alpha^{2n+1}-\alpha^{2n+2}+\alpha^{2n+3}+\alpha^{2n+4}\\
&=R^\flat_{2n}(\alpha)-\alpha^{2n+1}(1+\alpha-\alpha^2-\alpha^3)>0,
\end{align*}
where we have used that $R^\flat_{2n}(\alpha)\ge0$ and that $1+\alpha-\alpha^2-\alpha^3>0$ for $\alpha<-1$.

Now suppose that $m\ge1$ and that the assertion has been established for all $k<m$.  Then, taking $k:=m-1$,
\begin{align*}
R^\flat_{2n+4m+1}(\alpha)&=R^\flat_{2n+4k+1}(\alpha)-\alpha^{2n+4k+2}+\alpha^{2n+4k+3}+\alpha^{2n+4k+4}-\alpha^{2n+4k+5}\\
&=R^\flat_{2n+4k+1}(\alpha)-\alpha^{2n+4k+2}(1-\alpha-\alpha^2+\alpha^3)>0,
\end{align*}
where we have used the induction hypothesis and the fact that $1-\alpha-\alpha^2+\alpha^3<0$ for $\alpha<-1$.
It follows that $\rho^\flat_{2n+4m+2}(\alpha)=-1$. Therefore, letting again $k=m-1$,
\begin{align*}
R^\flat_{2n+4m+2}(\alpha)&=R^\flat_{2n+4k+2}(\alpha)+\alpha^{2n+4k+3}+\alpha^{2n+4k+4}-\alpha^{2n+4k+5}-\alpha^{2n+4k+6}\\
&=R^\flat_{2n+4k+2}(\alpha)+\alpha^{2n+4k+3}(1+\alpha-\alpha^2-\alpha^3)<0.
\end{align*}
It follows that $\rho^\flat_{2n+4m+3}(\alpha)=+1$, and so $R^\flat_{2n+4m+3}(\alpha)=R^\flat_{2n+4m+2}(\alpha)+\alpha^{2n+4m+3}<0$. Finally, 
\begin{align*}
R^\flat_{2n+4m+4}(\alpha)&=R^\flat_{2n+4(m-1)+4}(\alpha)-\alpha^{2n+4(m-1)+5}-\alpha^{2n+4m+2}+\alpha^{2n+4m+3}+\alpha^{2n+4m+4}\\
&=R^\flat_{2n+4(m-1)+4}(\alpha)-\alpha^{2n+4m+1}(1+\alpha-\alpha^2-\alpha^3)>0.
\end{align*}
This concludes the proof.
\end{proof}

\begin{proof}[Proof of Theorem~\ref{alpha negative thm}] Let
$$\cZ(\alpha):=\Big\{n\in\bN_0\,\Big|\,R_n^\sharp(\alpha)=0\Big\}.
$$
Then Lemmas~\ref{alpha negative first rho entries lemma} and~\ref{alpha negative sequence lemma} imply that $|\cZ(\alpha)|=1$ if $\alpha\in\{x_1,x_2,\dots\}$ and $|\cZ(\alpha)|=0$ otherwise. Therefore, Proposition~\ref{Takagi class number of max prop} yields that $f_\alpha$ will have two maximizers in $[0,1/2]$ in the first case and one in the second case.  We will show next that these maximizers are given by the numbers $t_n$. Since those numbers are all different from $1/2$, the assertion on the number of maximizers in $[0,1]$ will follow. 

Next, Lemma~\ref{alpha negative sequence lemma} implies that for $m,n\in\bN_0$,
\begin{equation*}
\begin{split}
\rho^\flat_{2n+4m+2}=-1,\ \rho^\flat_{2n+4m+3}=+1,\ \rho^\flat_{2n+4m+4}=+1,\ \rho^\flat_{2n+4m+5}=-1\qquad\text{on $[x_n,x_{n+1})$,}\\
\rho^\sharp_{2n+4m+2}=-1,\ \rho^\sharp_{2n+4m+3}=+1,\ \rho^\sharp_{2n+4m+4}=+1,\ \rho^\sharp_{2n+4m+5}=-1\qquad\text{on $(x_n,x_{n+1}]$.}
\end{split}
\end{equation*}

With 
Lemma~\ref{alpha negative first rho entries lemma} we hence obtain  that for $\alpha\in[x_n,x_{n+1})$,
\begin{align*}
\tau^\flat(\alpha)=\sum_{m=1}^{2n+1}2^{-(m+1)}+\sum_{m=0}^\infty\Big(2^{-(2n+4m+3)}+2^{-(2n+4m+6)}\Big)=\frac{5-4^{-n}}{10}.
\end{align*}
Finally, Lemmas~\ref{alpha negative first rho entries lemma}  and~\ref{alpha negative sequence lemma} also give that $\tau^\sharp(\alpha)=\tau^\flat(\alpha-)$ for all $\alpha$ and this identifies the maximum location(s).

To identify the value of the maximum,  we need to compute $f_\alpha(t_n)$. The periodicity of $\phi$ implies that
\begin{align*}
f_\alpha(t_n)=\sum_{m=0}^\infty\frac{\alpha^m}{2^m}\phi\Big(2^m\frac{5-4^{-n}}{10}\Big)=\frac{5-4^{-n}}{10}+\sum_{m=1}^\infty\frac{\alpha^m}{2^m}\phi\Big(\frac{2^{m-2n}}{10}\Big).
\end{align*}
If $m\le 2n+2$, then $\frac{2^{m-2n}}{10}<1/2$, and so $\phi(\frac{2^{m-2n}}{10})=\frac{2^{m-2n}}{10}$. It follows that 
$$\sum_{m=1}^{2n+2}\frac{\alpha^m}{2^m}\phi\Big(\frac{2^{m-2n}}{10}\Big)=\frac{2^{-2n}}{10}\cdot\frac{\alpha(\alpha^{2n+2}-1)}{\alpha-1}.
$$
If $m\ge 2n+3$, then $\phi(\frac{2^{m-2n}}{10})=1/5$ if $m$ is odd and $\phi(\frac{2^{m-2n}}{10})=2/5$ if $m$ is even. It follows that 
\begin{align*}
\sum_{m=1}^\infty\frac{\alpha^m}{2^m}\phi\Big(\frac{2^{m-2n}}{10}\Big)&=\frac{\alpha^{2n+3}}{2^{2n+3}}\sum_{k=0}^\infty\frac{1+\alpha}5\cdot\Big(\frac\alpha2\Big)^{2k}
=\frac{\alpha^{2n+3}(1+\alpha)}{5\cdot 2^{2n+3}(1-(\alpha/2)^2)}.
\end{align*}
Putting everything together and simplifying yields the assertion.
\end{proof}

\subsection{Proof of result from Section~\ref{(1/2,1] section}
}

We start with a lemma that will have several applications in the proofs of this section.

\begin{lemma}\label{dense set lemma 1}
	Suppose that $\bm\rho$ satisfies $\rho_0=1$ and the step condition for  $\alpha \in (1/2,1)$. Then, for any $n \in \bN_0$, there exists  $n_0 > n$ such that $R_n(\alpha) := \sum_{m =0}^{n} \rho_m \alpha^m$ satisfies
	\begin{equation}\label{dense lemma condition eq}
		R_{n_0}(\alpha)R_{n_0+1}(\alpha) \leq 0.
	\end{equation}
\end{lemma}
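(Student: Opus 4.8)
The plan is to argue by contradiction: suppose that there is some $n\in\bN_0$ such that for all $n_0>n$ we have $R_{n_0}(\alpha)R_{n_0+1}(\alpha)>0$. This means that the sign of $R_{n_0}(\alpha)$ is constant for all $n_0>n$; call this common sign $s\in\{-1,+1\}$. First I would observe that the step condition $\rho_{n+1}R_n(\alpha)\le0$ together with constant sign $s$ forces $\rho_{n_0+1}$ to have sign $-s$ (or to be arbitrary only when $R_{n_0}=0$, but $R_{n_0}\ne0$ since its sign is $s$); hence $\rho_{m}=-s$ for all sufficiently large $m$, say $m\ge n+2$. So eventually the Rademacher sequence is constant.

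Next I would extract a contradiction from $R_{n_0}(\alpha)$ having constant sign $s$ while the tail increments $\rho_{n_0+1}\alpha^{n_0+1}=-s\,\alpha^{n_0+1}$ all have sign $-s$ (recall $\alpha>0$). Writing $R_{n_0+1}(\alpha)=R_{n_0}(\alpha)-s\alpha^{n_0+1}$, we see that along the tail $R_{n_0}(\alpha)$ is strictly monotone: it decreases by $\alpha^{n_0+1}>0$ at each step if $s=+1$, and increases if $s=-1$. In the case $s=+1$, the values $R_{n_0}(\alpha)$ form a strictly decreasing sequence of \emph{positive} numbers, and $R_{N}(\alpha)=R_{n+1}(\alpha)-\sum_{m=n+2}^{N}\alpha^m\to R_{n+1}(\alpha)-\sum_{m=n+2}^\infty\alpha^m$ as $N\to\infty$, because $\alpha<1$ makes the geometric series converge. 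But there is no reason the limit is positive — actually I need the sharper quantitative point: since $R_{n_0+1}(\alpha)=R_{n_0}(\alpha)-\alpha^{n_0+1}$ and $R_{n_0}(\alpha)>0$, positivity of $R_{n_0+1}(\alpha)$ requires $R_{n_0}(\alpha)>\alpha^{n_0+1}$; iterating, $R_{n_0}(\alpha)>\sum_{k\ge 1}\alpha^{n_0+k}=\alpha^{n_0+1}/(1-\alpha)$ is \emph{not} automatically contradictory, so instead I would compare two consecutive terms more carefully. The cleanest contradiction: the partial sums $R_{n_0}(\alpha)$ converge (to $f$-type value) since $\sum|\alpha|^m<\infty$, so $R_{n_0}(\alpha)-R_{n_0+1}(\alpha)=\alpha^{n_0+1}\to0$, which is consistent; the real obstruction is that a tail of the form $\rho_m\equiv-s$ yields $R_{n_0}(\alpha)=R_{n+1}(\alpha)-s\sum_{m=n+2}^{n_0}\alpha^m$, whose limit is $R_\infty:=R_{n+1}(\alpha)-s\,\alpha^{n+2}/(1-\alpha)$. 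If $s=+1$ this limit $R_\infty$ satisfies $R_\infty<R_{n_0}(\alpha)$ for every $n_0$, and since all $R_{n_0}(\alpha)>0$ we only get $R_\infty\ge0$; but then pick $n_0$ large enough that $R_{n_0}(\alpha)<R_\infty+\alpha^{n_0+1}$... this still needs one more squeeze. The decisive inequality is: $R_{n_0+1}(\alpha)=R_\infty+\sum_{m\ge n_0+2}\alpha^m=R_\infty+\alpha^{n_0+2}/(1-\alpha)$ when $s=+1$; and $R_{n_0}(\alpha)=R_\infty+\alpha^{n_0+1}/(1-\alpha)$. For these to be positive for all large $n_0$ we need $R_\infty\ge0$, and if $R_\infty>0$ everything is fine — so the contradiction is NOT automatic, meaning I've mis-set-up. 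The correct reading: the hypothesis to contradict gives constant sign of $R_{n_0}$ for $n_0>n$, but the step condition then forces $\rho_{n_0+1}=-s$ for $n_0\ge n$, i.e. the tail is $\equiv-s$; however $R_{n_0}(\alpha)$ with $s=+1$ means $R_{n_0}(\alpha)>0$, whereas $\rho_{n_0+1}=-1$ plus $\rho_{n_0+2}=-1,\dots$ means the original \emph{full} sequence $\bm\rho$ has a tail $-1,-1,\dots$, and one computes $R_{n_0}(\alpha)\to R_\infty$ — but this limit can genuinely be positive, so contradiction fails unless I also use that $\bm\rho$ came from something. Re-examining: I think the intended contradiction is simpler and I would use the \emph{alternating} structure. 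Let me restate: if $R_{n_0}(\alpha)R_{n_0+1}(\alpha)>0$ for all $n_0>n$, then since $R_{n_0+1}(\alpha)-R_{n_0}(\alpha)=\rho_{n_0+1}\alpha^{n_0+1}$ and $|\rho_{n_0+1}\alpha^{n_0+1}|=\alpha^{n_0+1}\to0$, and since the $R_{n_0}(\alpha)$ are bounded away from $0$ in absolute value (being of constant sign and convergent to $R_\infty$), we'd need $|R_\infty|>0$; but then $R_\infty=\sum_{m=0}^\infty\rho_m\alpha^m$ with $\rho_m\equiv-s$ eventually gives a \emph{specific} value, and — here is the key — the step condition at \emph{every} index $k\le n$ was \emph{also} used to pin down $\rho_0,\dots,\rho_n$, and combined with $\rho_0=1$ this restricts $R_\infty$. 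Rather than chase this, I would instead follow the structural route of Lemma~\ref{alpha negative sequence lemma}: show the tail cannot be eventually constant.

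Thus the real plan is: \textbf{Step 1.} Assume for contradiction $R_{n_0}(\alpha)R_{n_0+1}(\alpha)>0$ for all $n_0>n$. \textbf{Step 2.} Deduce that $\operatorname{sign}R_{n_0}(\alpha)=s$ is constant for $n_0\ge n+1$, hence (step condition, $R_{n_0}\ne0$) $\rho_{n_0+1}=-s$ for all $n_0\ge n$, so $\rho_m=-s$ for all $m\ge n+2$. \textbf{Step 3.} For concreteness take $s=+1$ (the case $s=-1$ is symmetric via $\bm\rho\mapsto-\bm\rho$). Then for $n_0\ge n+1$, $R_{n_0}(\alpha)=R_{n+1}(\alpha)-\sum_{m=n+2}^{n_0}\alpha^m$, which is \emph{strictly decreasing} in $n_0$ with limit $R_{n+1}(\alpha)-\dfrac{\alpha^{n+2}}{1-\alpha}$. \textbf{Step 4.} But $\rho_{n+1}R_n(\alpha)\le0$ with $R_{n_0}(\alpha)>0$ for $n_0>n$: in particular $R_{n+1}(\alpha)>0$, so $\rho_{n+1}=-1$ would be forced only if $R_n(\alpha)>0$; examine $R_n(\alpha)$ — either it is $>0$, giving $\rho_{n+1}=-1$ consistent with $s=+1$ shift; the genuine contradiction comes by noting $R_{n_0+1}(\alpha)=R_{n_0}(\alpha)-\alpha^{n_0+1}$, so positivity of $R_{n_0+1}$ demands $R_{n_0}(\alpha)>\alpha^{n_0+1}$ for \emph{all} $n_0\ge n+1$; summing the relations from $n+1$ to $N$ gives $R_{N}(\alpha)=R_{n+1}(\alpha)-(\alpha^{n+2}+\cdots+\alpha^{N})$, and letting $N\to\infty$, $\lim R_N(\alpha)=R_{n+1}(\alpha)-\frac{\alpha^{n+2}}{1-\alpha}\ge 0$; combine with $\rho_{n+1}=-1$, i.e. $R_{n+1}(\alpha)=R_n(\alpha)-\alpha^{n+1}$, to get $R_n(\alpha)\ge\alpha^{n+1}+\frac{\alpha^{n+2}}{1-\alpha}=\frac{\alpha^{n+1}}{1-\alpha}$. \textbf{The main obstacle} — and where I expect to spend real effort — is closing this last inequality into a contradiction: I would push the estimate one more index, using $\rho_{n+2}=-1$ and the value of $R_{n+2}(\alpha)$, to derive $R_n(\alpha)=\frac{\alpha^{n+1}}{1-\alpha}$ \emph{exactly}, forcing $R_{n_0}(\alpha)\to0$, contradicting $|R_{n_0}(\alpha)|$ bounded below; alternatively, and more robustly, I would invoke that $\bm\rho$ arises from a sequence satisfying the step condition with $\rho_0=1$ and run the contradiction through $\alpha>1/2$ exactly as in the proof of Lemma~\ref{alpha negative sequence lemma}, where the inequality $\alpha>1/2$ (equivalently $\frac{\alpha}{1-\alpha}>1$) is what makes a constant tail incompatible with the sign pattern. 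In short: the crux is to show a constant tail $\rho_m\equiv-1$ (for $m$ large) is impossible under the step condition when $\alpha\in(1/2,1)$, because it would force $R_n(\alpha)$ to simultaneously be positive and to equal the negative geometric tail $-\sum_{m>n}\alpha^m$ shifted by $R_n$, and $\alpha>1/2$ breaks the resulting inequality.

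(If the contradiction still does not close cleanly, the fallback is to prove the contrapositive directly: if \eqref{dense lemma condition eq} fails for all $n_0>n$, the sequence $(R_{n_0}(\alpha))_{n_0>n}$ is of one sign and monotone, hence the limit $R_\infty(\alpha)=\sum_{m=0}^\infty\rho_m\alpha^m$ has $|R_\infty(\alpha)|\ge|R_{n_0}(\alpha)|$ or $\le$, pinning $\rho_m$ eventually constant; then the exact value of a geometric-tail sum combined with the single constraint $\rho_0=1$ and the step condition at the first index where the tail starts yields $\alpha\le1/2$, contradicting $\alpha\in(1/2,1)$.)
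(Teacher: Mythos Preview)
Your contradiction setup is correct: assuming $R_{n_0}(\alpha)R_{n_0+1}(\alpha)>0$ for all $n_0>n$ gives a common sign $s$ for $R_{n_0}(\alpha)$ ($n_0\ge n+1$), and the step condition then forces $\rho_m=-s$ for all $m\ge n+2$. You also reach the right intermediate inequality $R_{n+1}(\alpha)\ge\alpha^{n+2}/(1-\alpha)$ (case $s=+1$) and correctly intuit in your fallback that the endpoint is $\alpha\le1/2$. But you never supply the mechanism connecting the two; your Step~4 wanders without closing. The missing device is a \emph{backward} induction propagating the inequality down to index $0$: from $R_k(\alpha)\ge\alpha^{k+1}/(1-\alpha)$ and $\alpha>1/2$ one has $R_k(\alpha)>\alpha^k$, hence $R_{k-1}(\alpha)=R_k(\alpha)-\rho_k\alpha^k>0$ regardless of the sign of $\rho_k$; the step condition at index $k$ then forces $\rho_k=-1$, whence $R_{k-1}(\alpha)=R_k(\alpha)+\alpha^k\ge\alpha^k/(1-\alpha)$. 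Running this from $k=n+1$ down to $k=1$ yields $1=\rho_0=R_0(\alpha)\ge\alpha/(1-\alpha)$, i.e., $\alpha\le1/2$, the desired contradiction. The case $s=-1$ is handled by the mirror argument (your proposed symmetry $\bm\rho\mapsto-\bm\rho$ is not quite right, since it flips $\rho_0$, but the direct analogue gives $R_0(\alpha)\le-\alpha/(1-\alpha)<0$, contradicting $R_0=1$).

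The paper's proof is structured differently. It first disposes of the case where $f_\alpha$ has a non-unique maximizer in $[0,1/2]$, where the block description of Theorem~\ref{Cantor thm}(b) makes $R_k$ vanish periodically. In the remaining case it argues by \emph{forward} induction on $n$: the base case uses $1-\sum_{m\ge1}\alpha^m=(1-2\alpha)/(1-\alpha)<0$ to locate the first index at which $R_k$ changes sign, and the inductive step restarts that same argument from the last sign change, exploiting $|R_{n+1}|<\alpha^{n+1}$ to show the tail again overshoots. Once completed with the backward induction above, your contradiction route is arguably tidier, as it needs neither the case split nor the forward reference to Theorem~\ref{Cantor thm}.
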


\begin{proof}If the maximizer of $f_\alpha$ is not unique, then Theorem~\ref{Cantor thm} implies that there exists $n_0\in\bN$ such that $R_{kn_0}(\alpha)=0$ for all $k\in\bN$. Hence, the assertion is obvious in this case. Otherwise, we have $\bm\rho=\bm\rho^\flat=\bm\rho^\sharp$. Observe that $1 - \sum_{m = 1}^{\infty}\alpha^m=(1-2\alpha)/(1 - \alpha) < 0$, as $\alpha\in(1/2,1)$. Therefore, there exists  $n_0>0$ such that 
	\begin{equation}\label{dense lemma construction eq1}
	1 - \sum_{m = 1}^{k}\alpha^m \geq 0\quad\text{for all $k \leq n_0$} \quad \text{and} 	\quad 
	1 - \sum_{m = 1}^{n_0+1}\alpha^m < 0.	\end{equation}
	It follows that we must have $\rho_1=\cdots=\rho_{n_0+1}=-1$ and $\rho_{n_0+2}=+1$. Moreover, the inequalities \eqref{dense lemma condition eq}
 and on the left-hand side of \eqref{dense lemma construction eq1}
 must be strict.
	Therefore, we have that  $R_{n_0+1}(\alpha)R_{n_0}(\alpha) \leq 0$. This establishes the assertion for $n = 0$.
	
	For general $n$, we proceed by induction. So let us  suppose that $n \geq 1$ and that the assertion has been established for all $m \leq n-1$. By induction hypothesis, there exists $n_0> n-1$ such that \eqref{dense lemma condition eq} holds. If $n_0>n$, we are done. So we only need to consider the case $n_0=n$. Then $R_n(\alpha)R_{n+1}(\alpha)<0$. 
	If $R_{n}(\alpha)>0$, then $\rho_{n+1}=-1$ and hence $0 > R_{n+1}(\alpha) = R_{n}(\alpha) - \alpha^{n+1}> -\alpha^{n+1}$. In turn we get $\rho_{n+2}=+1$. Moreover, since $\alpha\in(1/2,1)$,
	$$R_{n+1}(\alpha)+\sum_{m=n+2}^\infty\alpha^m>  -\alpha^{n+1}+\frac{\alpha^{n+2}}{1-\alpha}=\frac{\alpha^{n+1}(2\alpha-1)}{1-\alpha}	>0.$$
	Therefore, the assertion follows as in the case $n=0$. If $R_{n}(\alpha)<0$, then we can use the same argument with switched signs. \end{proof}
	
	The first application of the preceding lemma concerns the possibility of $t=1/2$ being a maximizer of $f_\alpha$. As we saw in Proposition~\ref{alpha in -1 1/2 prop}, this is what happens for $\alpha\in[-1,1/2]$. The following result is also contained in~\cite[Theorem 4]{GalkinGalkina}, but we can give a very short proof here. 
	
	\begin{lemma}\label{t=1/2 lemma}The value $t=1/2$ is not a maximizer of $f_\alpha$ if  $\alpha>1/2$.	\end{lemma}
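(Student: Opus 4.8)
The plan is to use the characterization in Theorem~\ref{general Takagi step cond thm}: $t=1/2$ is a maximizer of $f_\alpha$ if and only if $1/2$ admits a Rademacher expansion satisfying the step condition for $\alpha$ (Definition~\ref{exponential Takagi step cond def}). Since $1/2$ is a dyadic rational in $(0,1)$, it has exactly two Rademacher expansions, namely the sequence $\bm\rho$ with $\rho_0=1$ and $\rho_m=-1$ for all $m\ge1$, and its negative $-\bm\rho$. Because $-\bm\rho$ satisfies the step condition if and only if $\bm\rho$ does, it suffices to show that this single sequence $\bm\rho$ violates the step condition for every $\alpha>1/2$.

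For this $\bm\rho$, the relevant partial sums are $\sum_{m=0}^{n-1}\alpha^m\rho_m=1-\alpha-\alpha^2-\cdots-\alpha^{n-1}$ for $n\ge2$. The key observation is that $\alpha>1/2$ forces $\sum_{m=1}^\infty\alpha^m>1$, since the series equals $\alpha/(1-\alpha)>1$ when $\alpha\in(1/2,1)$ and diverges when $\alpha\ge1$. Hence there is a smallest integer $k\ge1$ with $1-\alpha-\cdots-\alpha^k<0$, and evaluating the step condition at $n=k+1$ gives $\rho_{k+1}\sum_{m=0}^{k}\alpha^m\rho_m=-(1-\alpha-\cdots-\alpha^k)>0$, which violates the step condition. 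Thus neither Rademacher expansion of $1/2$ satisfies the step condition, and Theorem~\ref{general Takagi step cond thm} yields $1/2\notin\argmax f_\alpha$.

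For $\alpha\in(1/2,1)$ this is also the promised first application of Lemma~\ref{dense set lemma 1}: if $1/2$ were a maximizer, the step-condition-satisfying expansion $\bm\rho$ (with $\rho_0=1$) would, by that lemma, produce arbitrarily large indices $n_0$ with $R_{n_0}(\alpha)R_{n_0+1}(\alpha)\le0$; but here $R_n(\alpha)=1-\alpha-\cdots-\alpha^n$ is strictly decreasing with $R_0(\alpha)=1>0$ and $R_n(\alpha)\to(1-2\alpha)/(1-\alpha)<0$, so $R_n(\alpha)$ changes sign only once and only finitely many such $n_0$ exist, a contradiction. I do not expect a real obstacle here; the only points needing care are correctly listing the two Rademacher expansions of the dyadic point $1/2$ and using negation-symmetry to reduce to one of them, and remembering that Lemma~\ref{dense set lemma 1} is stated only for $\alpha\in(1/2,1)$, so the interval $[1,2)$ is covered either by the uniform step-condition computation of the second paragraph or, alternatively, by citing Theorem~\ref{alpha in (1,2) thm} together with Kahane's characterization of the maximizers of $f_1$.
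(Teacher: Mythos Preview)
Your proof is correct and follows the same overall strategy as the paper: both arguments observe that the Rademacher expansion $\bm\rho=(+1,-1,-1,\dots)$ of $1/2$ must satisfy the step condition by Theorem~\ref{general Takagi step cond thm} and then show it cannot. The paper's proof is a one-liner that appeals directly to Lemma~\ref{dense set lemma 1} to derive the contradiction, without writing out the partial sums.

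Your second paragraph, however, adds something the paper does not: a direct, self-contained verification that the step condition fails for \emph{every} $\alpha>1/2$, by locating the first index $k$ where $1-\alpha-\cdots-\alpha^k<0$. This is arguably cleaner, since Lemma~\ref{dense set lemma 1} is stated only for $\alpha\in(1/2,1)$, so the paper's short proof literally covers just that subinterval and tacitly relies on Theorem~\ref{alpha in (1,2) thm} and Kahane's result for $\alpha\ge1$. Your third paragraph then recovers the paper's argument as the special case $\alpha\in(1/2,1)$ and correctly flags the range issue. Both routes are valid; yours is slightly more elementary and uniform, while the paper's is terser at the cost of leaning on Lemma~\ref{dense set lemma 1}.
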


	\begin{proof}Note that $t=1/2$ has the Rademacher expansion $\bm\rho=(+1,-1,-1,\dots)$. Thus, if $t=1/2$  were a maximizer, then $\bm\rho$ would have to satisfy the step condition by Theorem~\ref{general Takagi step cond thm}. But this would contradict Lemma~\ref{dense set lemma 1}. 
	\end{proof}

\begin{proof}[Proof of Theorem~\ref{Cantor thm}] In case (a), Proposition~\ref{Takagi class number of max prop} yields that $f_\alpha$ has a unique maximizer in $[0,1/2]$. By Lemma~\ref{t=1/2 lemma}, this maximizer is strictly smaller than $1/2$. Therefore, there are exactly two maximizers in $[0,1]$.

In case (b), it is easy to see that a $\{-1,+1\}$-valued sequence satisfies the step condition for $\alpha$ if and only if it is made up of successive blocks of the form $\rho^\sharp_0,\dots, \rho^\sharp_{n_0}$ or $(-\rho^\sharp_0),\dots,(- \rho^\sharp_{n_0})$. Hence, Theorem~\ref{general Takagi step cond thm}
 identifies precisely those sequences as the Rademacher expansions of the minimizers of $f_\alpha$. Finally, Lemma~\ref{Hausdorff lemma} yields the assertion on the Hausdorff dimension and the Hausdorff measure.
\end{proof}

 \begin{proof}[Proof of Corollary~\ref{dyadic cor}] Let us suppose by way of contraction that $f_\alpha$ has a maximizer of the form $k2^{-n}$ for some $n\in\bN$ and $k\in\{0,\dots, 2^n\}$. 
 By Lemma~\ref{t=1/2 lemma}, we cannot have $t=1/2$.  It is moreover clear that the cases $t=0$ and $t=1$ are impossible. By symmetry of $f_\alpha$, we may thus assume that $t\in(0,1/2)$. Since $t$ is a dyadic rational number, it will have two distinct Rademacher expansions $\bm\rho$ and $\wt{\bm\rho}$ with $\rho_0=\wt\rho_0=1$. Moreover, there will be $n_1\in\bN$ such that one of them, say $\bm\rho$, satisfies $\rho_n=+1$ for $n\ge n_1$, whereas $\wt\rho_n=-1$ for $n\ge n_1$.
 By Theorem~\ref{general Takagi step cond thm}, both $\bm\rho$ and $\wt{\bm\rho}$ satisfy the step condition. Hence, Proposition~\ref{Takagi class number of max prop} implies that there exists a minimal $n_0\in\bN$ such that $\sum_{m=0}^{n_0}\alpha^m\rho_m=0$. By Theorem~\ref{Cantor thm}, both $\bm\rho$ and $\wt{\bm\rho}$ must therefore be formed out of blocks of the form $\rho_0,\dots,\rho_{n_0}$ or $(-\rho_0),\dots,(-\rho_{n_0})$. But then  these two blocks must be equal to $1,\dots,1$ and $-1,\dots, -1$, and every sequence formed by these blocks must be a maximizer. This implies that $0$ and $1$ are maximizers, which is impossible. \end{proof}

\begin{lemma}\label{dense set lemma 2}
	In the context of Lemma~\ref{dense set lemma 1}, we have $R_n(\alpha) \lra 0$ as $n\ua\infty$.
\end{lemma}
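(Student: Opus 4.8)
The plan is to show that the single scalar bound $|R_j(\alpha)|\le\alpha^j$ is self-propagating in $j$, and that Lemma~\ref{dense set lemma 1} hands us an index at which it first holds; since $\alpha^n\to0$, this forces $R_n(\alpha)\to0$. Throughout I abbreviate $R_n:=R_n(\alpha)=\sum_{m=0}^n\alpha^m\rho_m$ and use the step condition from Definition~\ref{exponential Takagi step cond def} in the shifted form $\rho_{n+1}R_n\le0$ for every $n\in\bN_0$.

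First I would prove the propagation step: \emph{if $|R_j|\le\alpha^j$ then $|R_{j+1}|\le\alpha^{j+1}$}. If $R_j=0$, then $R_{j+1}=\rho_{j+1}\alpha^{j+1}$ and the bound is clear. If $R_j>0$, the step condition forces $\rho_{j+1}=-1$, so $R_{j+1}=R_j-\alpha^{j+1}$; from $0<R_j\le\alpha^j$ we get $-\alpha^{j+1}<R_{j+1}\le\alpha^j(1-\alpha)$, and here I invoke $\alpha>1/2$, which gives $1-\alpha<\alpha$ and hence $\alpha^j(1-\alpha)<\alpha^{j+1}$; thus $|R_{j+1}|<\alpha^{j+1}$. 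The case $R_j<0$ is symmetric, with $\rho_{j+1}=+1$.

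Next I would produce the starting index. Apply Lemma~\ref{dense set lemma 1} with $n=0$ to obtain some $n_0>0$ with $R_{n_0}R_{n_0+1}\le0$, and claim $|R_{n_0+1}|\le\alpha^{n_0+1}$. If either $R_{n_0}$ or $R_{n_0+1}$ vanishes this is immediate. Otherwise, say $R_{n_0}>0>R_{n_0+1}$: the step condition gives $\rho_{n_0+1}=-1$, so $R_{n_0+1}=R_{n_0}-\alpha^{n_0+1}>-\alpha^{n_0+1}$, and since $R_{n_0+1}<0$ we conclude $|R_{n_0+1}|<\alpha^{n_0+1}$; the case $R_{n_0}<0<R_{n_0+1}$ is symmetric.

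Finally I would combine the two: starting from $|R_{n_0+1}|\le\alpha^{n_0+1}$ and iterating the propagation step gives $|R_n|\le\alpha^n$ for every $n\ge n_0+1$, and since $\alpha\in(1/2,1)$ we have $\alpha^n\to0$, so $R_n(\alpha)\to0$. There is no genuine obstacle; the only points that need care are placing the hypothesis $\alpha>1/2$ exactly where it is used (to dominate the $\alpha^j(1-\alpha)$ term by $\alpha^{j+1}$ in the propagation step) and noting that a single sign change from Lemma~\ref{dense set lemma 1} already suffices, so the full strength of that lemma is not needed.
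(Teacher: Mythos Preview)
Your proof is correct. The paper's argument shares the same starting point---using Lemma~\ref{dense set lemma 1} to locate a sign change and conclude $|R_{n_0+1}|\le\alpha^{n_0+1}$---but propagates the bound differently: rather than showing that $|R_j|\le\alpha^j$ is self-improving, the paper records only the weaker monotonicity $|R_{j+1}|\le|R_j|$ whenever $R_jR_{j+1}>0$, which keeps $|R_m|\le\alpha^{n_0}$ for all $m>n_0$ but does not by itself drive the bound to zero; the paper must therefore invoke Lemma~\ref{dense set lemma 1} again for every $n$ to push $n_0\to\infty$. Your self-propagating bound is sharper: it uses $\alpha>1/2$ directly (via $1-\alpha<\alpha$) so that a single sign change already yields $|R_n|\le\alpha^n$ for all large $n$, whereas the paper's route hides the dependence on $\alpha>1/2$ inside the infinitely many calls to Lemma~\ref{dense set lemma 1}. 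Both arguments are short, but yours is the more economical and makes explicit exactly where the hypothesis $\alpha\in(1/2,1)$ enters.
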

\begin{proof}
	For any $n \in \bN$, we have that $R_{n+1}(\alpha) = R_n(\alpha) + \rho_{n+1}\alpha^{n+1}$. Hence, if $R_{n+1}(\alpha)R_n(\alpha) \leq 0$, then we  must have that $|R_{n+1}(\alpha)| \leq \alpha^{n+1}$. Otherwise, the fact that $\rho_{n+1}R_n(\alpha)\le0$ implies that
	\begin{equation*}
		|R_{n+1}(\alpha)| = |R_{n}(\alpha) + \rho_{n+1} \alpha^{n+1}| \leq |R_n(\alpha)|.
	\end{equation*}
	Combining these two inequalities and using Lemma~\ref{dense set lemma 1} yields that for any $n \in \mathbb{N}$, there exists $n_0 > n$, such that for all $m > n_0$ we have 	 $|R_m(\alpha)|  \leq \alpha^{n_0}$. This proves the assertion.
\end{proof}

\begin{lemma}\label{dense set lemma 3}
	For  $\alpha \in (1/2,1)$ and every $\eps > 0$, there exists $\beta \in (\alpha - \eps,\alpha+\eps)\cap(1/2,1)$ such that $\bm\rho^\sharp(\alpha) \neq \bm\rho^\sharp(\beta)$.
\end{lemma}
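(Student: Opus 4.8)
The plan is to argue by contradiction. Suppose the assertion fails for some $\alpha\in(1/2,1)$ and some $\eps>0$. Then $\bm\rho^\sharp(\beta)=\bm\rho:=\bm\rho^\sharp(\alpha)$ for every $\beta$ in the set $I:=(\alpha-\eps,\alpha+\eps)\cap(1/2,1)$, which, since $\alpha\in(1/2,1)$, is a nonempty open subinterval of $(1/2,1)$.

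First I would observe that for each fixed $\beta\in I$ the (now $\beta$-independent) sequence $\bm\rho=\bm\rho^\sharp(\beta)$ satisfies $\rho_0=1$ and the step condition for $\beta$. Indeed, this is immediate from the recursion \eqref{rho+- eq bis}: writing $R_{n-1}(\beta):=\sum_{m=0}^{n-1}\beta^m\rho_m$, one has $\rho_n=+1$ exactly when $R_{n-1}(\beta)\le0$, so that $\rho_nR_{n-1}(\beta)\le0$ holds in all cases. Hence the hypotheses of Lemma~\ref{dense set lemma 1}, and therefore of Lemma~\ref{dense set lemma 2}, are met with the parameter $\beta$, and Lemma~\ref{dense set lemma 2} gives $R_n(\beta)=\sum_{m=0}^{n}\beta^m\rho_m\to0$ as $n\to\infty$, for every $\beta\in I$.

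Next I would pass to the power series $F(z):=\sum_{m=0}^\infty\rho_m z^m$. Since $|\rho_m|=1$ for all $m$, this series has radius of convergence $1$, so $F$ is holomorphic on the open unit disk. For real $\beta\in I\subset(-1,1)$ the previous step yields $F(\beta)=\lim_{n\to\infty}R_n(\beta)=0$, so $F$ vanishes identically on the interval $I$. Since $I$ has accumulation points inside the disk of convergence, the identity theorem (equivalently, uniqueness of Taylor coefficients of a function that is flat on an interval) forces $F\equiv0$, hence $\rho_m=0$ for all $m$, contradicting $\rho_0=1$. This contradiction proves the lemma.

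The only substantive input is Lemma~\ref{dense set lemma 2}, which converts the combinatorial step condition into the analytic statement $R_n(\beta)\to0$; once that is available, the argument is soft. The sole point requiring (minor) care is verifying that $\bm\rho^\sharp(\beta)$ really does satisfy the step condition for $\beta$, so that Lemma~\ref{dense set lemma 2} is applicable simultaneously at every $\beta\in I$; everything else is a routine appeal to the rigidity of analytic (or real-analytic) functions.
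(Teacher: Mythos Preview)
Your proof is correct and follows essentially the same route as the paper's: assume by contradiction that $\bm\rho^\sharp$ is constant on a neighborhood, apply Lemma~\ref{dense set lemma 2} to see that the partial sums $R_n(\beta)$ tend to $0$ for every $\beta$ in that neighborhood, and then invoke the identity theorem for the power series $\sum_m\rho_m z^m$ (radius of convergence~$1$) to force all coefficients to vanish, contradicting $\rho_0=1$. Your write-up is in fact slightly more careful than the paper's in that you explicitly intersect with $(1/2,1)$ and spell out why $\bm\rho^\sharp(\beta)$ satisfies the step condition for $\beta$, but the underlying argument is the same.
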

\begin{proof}
	Let us assume by way of contradiction that there exists $\alpha \in (1/2,1)$ and $\eps > 0$ that $\bm\rho:=\bm\rho^\sharp(\alpha) = \bm\rho^\sharp(\beta)$ for all $\beta \in (\alpha - \eps,\alpha + \eps)$. 
	Since
	$
		\limsup_{n} \sqrt[n]{|\rho_n|} = 1
	$,
	 $R(z):= \sum_{m =0}^\infty \rho_mz^m$ is an analytic function of $z\in(-1,1)$. Take $\eps>0$ so that  $(\alpha -\eps,\alpha + \eps) \subset (-1,1)$. Then  Lemma~\ref{dense set lemma 2} implies that $R(z) = 0$ for all $z \in (\alpha -\eps,\alpha + \eps)$ and in turn $R(z) = 0$ for all $z \in (-1,1)$. But this implies $\rho_n=0$ for all $n$ and hence a contradiction. 
\end{proof}

\begin{proof}[Proof of Theorem~\ref{nowhere flat thm}] We prove the assertion only for $\tau^\sharp$; the proof for $\tau^\flat$ is identical. Let $\alpha\in(1/2,1) $  and  $\eps > 0$ be given. By Lemma~\ref{dense set lemma 3} there exists $\beta \in (\alpha -\eps,\alpha +\eps)\cap(1/2,1)$, such that $\bm\rho^\sharp(\alpha) \neq \bm\rho^\sharp(\beta)$. By Corollary~\ref{dyadic cor}, neither $\bm\rho^\sharp(\alpha) $ nor $\bm\rho^\sharp(\beta) $ can be a Rademacher expansion of a dyadic rational number. Therefore, we must have 
	$
		\tau^\sharp(\alpha) = \cT(\bm\rho^\sharp(\alpha)) \neq \cT(\bm\rho^\sharp(\beta)) = \tau^\sharp(\beta)
	$. Now suppose by way of contradiction that there are $\alpha\in(1/2,1) $  and  $\eps > 0$ such that $\tau^\sharp$ is continuous on $(\alpha -\eps,\alpha +\eps)\cap(1/2,1)$. Let $\beta\in (\alpha -\eps,\alpha +\eps)\cap(1/2,1)$ be as above. By the intermediate value theorem, the continuous function $\tau^\sharp$ would have to take every value between $\tau^\sharp(\alpha)$ and $\tau^\sharp(\beta)$, but this contradicts Corollary~\ref{dyadic cor}.
	 \end{proof}

\subsection{Proof of result from Section~\ref{minima section}
}

\begin{proof}[Proof of Theorem~\ref{minima thm}] As discussed in 
Remark~\ref{minimizer rem}, we define $\bm\lambda^\flat(\alpha)$ and $\bm\lambda^\sharp(\alpha)$ by $\lambda^\flat_0(\alpha)=\lambda^\sharp_0(\alpha)=+1$ and 
\begin{equation*}
\lambda_n^\flat(\alpha)=\begin{cases}+1&\text{if $\sum_{m=0}^{n-1}\alpha^m\lambda^\flat_m(\alpha)>0$,}\\
-1&\text{otherwise,}
\end{cases}\qquad \lambda_n^\sharp(\alpha)=\begin{cases}+1&\text{if $\sum_{m=0}^{n-1}\alpha^m\lambda^\sharp_m(\alpha)\ge0$,}\\
-1&\text{otherwise.}
\end{cases}
\end{equation*}
Then $\cT(\bm\lambda^\flat(\alpha))$ is the largest and  $\cT(\bm\lambda^\sharp(\alpha))$ is the smallest minimizer of $f_\alpha$ in $[0,1/2]$. For simplicity, we will suppress the argument $\alpha$ in this proof. We also let $L^\flat_n:=\sum_{m=0}^{n}\alpha^m\lambda^\flat_m$ and $L^\sharp_n:=\sum_{m=0}^{n}\alpha^m\lambda^\sharp_m$.

\ref{minima thm (a)} We prove by induction on $n\in\bN_0$ that both $\bm\lambda=\bm\lambda^\flat$ and $\bm\lambda=\bm\lambda^\sharp $ satisfy
\begin{equation}\label{minima thm (a) claim}
\lambda_{4n}=+1,\quad\lambda_{4n+1}=+1,\quad\lambda_{4n+2}=-1,\quad\lambda_{4n+3}=-1.
\end{equation}
Then  $f_\alpha$ will have a unique minimizer on $[0,1/2]$, which will be equal to 
$$\cT(\bm\lambda)=\sum_{n=0}^\infty(1-\lambda_n)2^{-n-1}=\sum_{n=0}^\infty2^{-4n}(2^{-3}+2^{-4})=\frac15.
$$
To prove \eqref{minima thm (a) claim}, consider first the case $n=0$. Then $L_0=\lambda_0=+1$ and so $\lambda_1=+1$. Hence $L_1=1+\alpha<0$ and thus $\lambda_2=-1$. Finally, $L_2=L_1-\alpha^2<0$, so that $\lambda_3=-1$. Now suppose that $n\ge1$ and the assertion has been proved for all $m<n$. Then 
$$L_{4n-1}=\sum_{k=0}^{n-1}\alpha^{4k}(1+\alpha-\alpha^2-\alpha^3)
=(1+\alpha)^2(1-\alpha)\frac{1-\alpha^{4k}}{1-\alpha^4}>0.$$
Hence $\lambda_{4n}=+1$. It follows that $L_{4n+1}=L_{4n}+\alpha^{4n}>0$ and in turn $\lambda_{4n+1}>0$. Therefore,
\begin{align*}
L_{4n+1}=1+\alpha-\sum_{k=0}^{n-1}\alpha^{4k+2}(1+\alpha-\alpha^2-\alpha^3)=1+\alpha-\alpha^2(1+\alpha)^2(1-\alpha)\frac{1-\alpha^{4k}}{1-\alpha^4}<0.
\end{align*}
Hence $\lambda_{4n+2}=-1$ and so $L_{n+3}=L_{n+2}-\alpha^{4n+2}<0$. Thus, we finally get $\lambda_{4n+3}=-1$.

To prove our formula for the minimum value,  recall from the proof of Theorem~\ref{alpha negative thm} that $\phi(2^m/5)=1/5$ for $m$ even and  $\phi(2^m/5)=2/5$ for $m$ odd. Hence,
\begin{align*}
f_\alpha(1/5)=\sum_{m=0}^\infty\frac{\alpha^m}{2^m}\phi(2^m/5)=\sum_{m=0}^\infty\frac{\alpha^{2m}}{2^{2m}}\Big(\frac15+\frac{\alpha}2\cdot\frac{2}{5}\Big)=\frac{1+\alpha}{5(1-(\alpha/2)^2)}.
\end{align*}

\ref{minima thm (b)} Suppose that $\bm\lambda$ is any sequence satisfying the step condition for minima, $\lambda_nL_{n-1}\ge0$. Then we have $\lambda_1L_0=\lambda_1\lambda_0\ge0$ and hence $\lambda_1=\lambda_0$. Moreover, we have $L_1=\lambda_0-\lambda_1=0$. From here, it follows from a straightforward induction argument that we must have $\lambda_{2n}=\lambda_{2n+1}$ for all $n\in\bN_0$ and that, conversely, any such sequence satisfies the step condition for minima. Hence, Remark~\ref{minimizer rem} in conjunction with Theorem~\ref{general Takagi step cond thm}
 yields that the set of minimizers of $f_{-1}$ is equal to the set of all those $t\in[0,1]$ whose binary expansion, $t=0.\eps_0\eps_1\cdots$ satisfies $\eps_{2n}=\eps_{2n+1}$ for $n\in\bN_0$. Since this set contains $t=0$, the minimum value of $f_{-1}$ must be $f_{-1}(0)=0$. Clearly, the set of minimizers can also be represented as the set of those $t\in[0,1]$ whose binary expansion is formed of successive blocks of  the digits $11$ and $00$. Therefore, the claim on its Hausdorff dimension follows from Lemma~\ref{Hausdorff lemma}.

\ref{minima thm (c)} Let $\alpha\in(-1,2)$ be given. We show by induction on $n$ that $\lambda_n^\flat=+1$ for all $n\in\bN_0$. 
For $n=0$, we clearly have $\lambda_0^\flat=1$. Now suppose that the claim has been established for all $m\le n$. Then 
\begin{equation*}
L_n^\flat =\sum_{m=0}^n\lambda_m^\flat \alpha^m=\sum_{m=0}^n\alpha^m=\frac{1-\alpha^{n+1}}{1-\alpha}>0,
\end{equation*}
which gives  $\lambda_{n+1}^\flat  =+1$. It follows that $\cT(\bm\lambda^\flat )=0$. Since $\cT(\bm\lambda^\flat )$ is the largest minimizer in $[0,1/2]$. the result follows.\end{proof}

\bibliography{CTbook}{}

\begin{thebibliography}{10}

\bibitem{AllaartRandomMax}
P.~C. Allaart.
\newblock Distribution of the maxima of random {T}akagi functions.
\newblock {\em Acta Math. Hungar.}, 121(3):243--275, 2008.

\bibitem{AllaartKawamura}
P.~C. Allaart and K.~Kawamura.
\newblock The {T}akagi function: a survey.
\newblock {\em Real Analysis Exchange}, 37(1):1--54, 2011.

\bibitem{Baba}
Y.~Baba.
\newblock On maxima of {T}akagi-van der {W}aerden functions.
\newblock {\em Proc. Amer. Math. Soc.}, 91(3):373--376, 1984.

\bibitem{BaradaranTaghavi}
J.~Baradaran and M.~Taghavi.
\newblock Polynomials with coefficients from a finite set.
\newblock {\em Math. Slovaca}, 64(6):1397--1408, 2014.

\bibitem{Borwein}
P.~Borwein.
\newblock {\em Computational excursions in analysis and number theory},
  volume~10 of {\em CMS Books in Mathematics/Ouvrages de Math\'{e}matiques de
  la SMC}.
\newblock Springer-Verlag, New York, 2002.

\bibitem{ContPerkowski}
R.~Cont and N.~Perkowski.
\newblock Pathwise integration and change of variable formulas for continuous
  paths with arbitrary regularity.
\newblock {\em Trans. Amer. Math. Soc. Ser. B}, 6:161--186, 2019.

\bibitem{FoellmerIto}
H.~F{\"o}llmer.
\newblock Calcul d'{I}t\^o sans probabilit\'es.
\newblock In {\em Seminar on {P}robability, {XV} ({U}niv. {S}trasbourg,
  {S}trasbourg, 1979/1980)}, volume 850 of {\em Lecture Notes in Math.}, pages
  143--150. Springer, Berlin, 1981.

\bibitem{FrizHairer}
P.~K. Friz and M.~Hairer.
\newblock {\em A course on rough paths}.
\newblock Springer-Verlag, Heidelberg, 2014.

\bibitem{GalkinGalkina}
O.~E. Galkin and S.~Y. Galkina.
\newblock On properties of functions in exponential {T}akagi class.
\newblock {\em Ufa Math. J.}, 7(3):28--37, 2015.

\bibitem{Galkina}
S.~Y. Galkina.
\newblock Fourier-{H}aar coefficients of functions of bounded variation.
\newblock {\em Mat. Zametki}, 51(1):42--54, 160, 1992.

\bibitem{GubinelliImkellerPerkowski}
M.~Gubinelli, P.~Imkeller, and N.~Perkowski.
\newblock A {F}ourier analytic approach to pathwise stochastic integration.
\newblock {\em Electron. J. Probab.}, 21:Paper No. 2, 37, 2016.

\bibitem{HanMMathThesis}
X.~Han.
\newblock On the extrema of functions in the {T}akagi class.
\newblock Master's thesis, University of Waterloo, 2019.

\bibitem{HataYamaguti}
M.~Hata and M.~Yamaguti.
\newblock The {T}akagi function and its generalization.
\newblock {\em Japan J. Appl. Math.}, 1(1):183--199, 1984.

\bibitem{Kahane}
J.-P. Kahane.
\newblock Sur l'exemple, donn{\'e} par {M}. de {R}ham, d'une fonction continue
  sans d{\'e}riv{\'e}e.
\newblock {\em Enseignement Math}, 5:53--57, 1959.

\bibitem{Lagarias}
J.~C. Lagarias.
\newblock The {T}akagi function and its properties.
\newblock In {\em Functions in number theory and their probabilistic aspects},
  RIMS K\^oky\^uroku Bessatsu, B34, pages 153--189. Res. Inst. Math. Sci.
  (RIMS), Kyoto, 2012.

\bibitem{Martynov}
B.~Martynov.
\newblock On maxima of the van der {W}aerden function.
\newblock {\em Kvant}, 1982.

\bibitem{MishuraSchied2}
Y.~Mishura and A.~Schied.
\newblock On (signed) {T}akagi--{L}andsberg functions: {$p$}th variation,
  maximum, and modulus of continuity.
\newblock {\em J. Math. Anal. Appl.}, 473(1):258--272, 2019.

\bibitem{Moran}
P.~A.~P. Moran.
\newblock Additive functions of intervals and {H}ausdorff measure.
\newblock {\em Proc. Cambridge Philos. Soc.}, 42:15--23, 1946.

\bibitem{SchiedTakagi}
A.~Schied.
\newblock On a class of generalized {T}akagi functions with linear pathwise
  quadratic variation.
\newblock {\em J. Math. Anal. Appl.}, 433:974--990, 2016.

\bibitem{TaborTabor}
J.~Tabor and J.~Tabor.
\newblock Takagi functions and approximate midconvexity.
\newblock {\em J. Math. Anal. Appl.}, 356(2):729--737, 2009.

\bibitem{Takagi}
T.~Takagi.
\newblock A simple example of the continuous function without derivative.
\newblock In {\em Proc. Phys. Math. Soc. Japan}, volume~1, pages 176--177,
  1903.

\bibitem{Vader}
B.~Vader.
\newblock Real roots of {L}ittlewood polynomials.
\newblock Student thesis, {U}niversity of {G}roningen, 2016.

\end{thebibliography}
\bibliographystyle{abbrv}

\vfill\eject

 \end{document}